\documentclass[11pt, oneside]{article}   	%
\usepackage{geometry}                		%
\geometry{letterpaper}                   		%
\usepackage{graphicx}				%

\usepackage{hyperref}
\hypersetup{
    colorlinks,
    citecolor=green,
    filecolor=black,
    linkcolor=blue,
    urlcolor=blue
}

	\usepackage{subfigure}							 
	\usepackage{multirow}
	\usepackage{amsmath,amssymb,mathtools}
\usepackage{amsthm}
\usepackage{cleveref}
\usepackage{enumerate}

\newcommand{\Fc}{\mathcal{F}}

\newcommand{\Ic}{\mathcal{I}}

\newcommand{\Lc}{\mathcal{L}}
\newcommand{\Mc}{\mathcal{M}}

\newcommand{\Pc}{\mathcal{P}}

\newcommand{\Tc}{\mathcal{T}}

\newcommand{\Vc}{\mathcal{V}}

\newcommand{\Xc}{\mathcal{X}}

\newcommand{\Rbb}{\mathbb{R}}

\newcommand{\Nbb}{\mathbb{N}}

\newcommand{\Zbb}{\mathbb{Z}}

\newcommand{\level}{\mathrm{level}}
\newcommand{\depth}{\mathrm{depth}}
\newcommand{\rank}{\mathrm{rank}}

\newcommand{\id}{\mathrm{id}}
\newcommand{\mix}{\mathrm{mix}}
\newtheorem{theorem}{Theorem}[section]

\newtheorem{lemma}[theorem]{Lemma}
\newtheorem{proposition}[theorem]{Proposition}

\theoremstyle{remark}
\newtheorem{example}[theorem]{Example}
\newtheorem{remark}[theorem]{Remark}

\DeclareMathOperator*{\esssup}{ess\,sup}
\DeclareMathOperator*{\argmin}{arg\,min}

\usepackage{tikz,pgfplots}
\usetikzlibrary{plotmarks,trees,arrows,shapes}

\usepackage{xcolor}

\title{Approximation by tree tensor networks in high dimensions: Sobolev and compositional functions\thanks{M.B.\ acknowledges funding by
 Deutsche Forschungsgemeinschaft (DFG, German Research Foundation) -- Projektnummern 233630050; 211504053 -- TRR 146; SFB 1060.}}

\author{M. Bachmayr\thanks{Institut f\"ur Mathematik, Johannes Gutenberg-Universit\"at Mainz, Germany} ,  A. Nouy\thanks{Centrale Nantes, Nantes Université, Laboratoire de Math\'ematiques Jean Leray, CNRS UMR 6629, France} , R. Schneider\thanks{Technische Universit\"at Berlin, Germany}}
\date{\emph{Dedicated to Ronald DeVore on the occasion of his 80$^\text{th}$ birthday}}

\begin{document}
\maketitle

\begin{abstract}
This paper is concerned with convergence estimates for fully discrete tree tensor network approximations of high-dimensional functions from several model classes.
For functions having standard or mixed Sobolev regularity, new estimates generalizing and refining known results are obtained, based on notions of linear widths of multivariate functions.
In the main results of this paper, such techniques are applied to classes of functions with compositional structure, which are known to be particularly suitable for approximation by deep neural networks. 
As shown here, such functions can also be approximated by tree tensor networks without a curse of dimensionality -- however, subject to certain conditions, in particular on the depth of the underlying tree. 
In addition, a constructive encoding of compositional functions in tree tensor networks is given.
\end{abstract}   
   
\section{Introduction}

The performance of standard approximations schemes based on splines or wavelets can be characterized by classical notions of Sobolev or Besov smoothness.
In the approximation of functions on high-dimensional domains, such standard methods are too inefficient, which is related to the fact that the associated smoothness classes are too broad: in order to approximate high-dimensional functions with tractable complexity, one needs to exploit more specific features of these functions.
This motivates the analysis of more narrow model classes of functions and of their interplay with corresponding approximation algorithms.
A classical example are sparse grids, whose performance is characterized by model classes of functions of high-order mixed regularity.

Here, we consider approximation algorithms based on tree tensor networks, which are a particular type of low-rank approximation of high-order tensors with favorable numerical properties. We study the performance of such approximations for two types of model classes.
On the one hand, we consider a class of functions that can be written as compositions of lower-dimensional component functions. These compositional functions may represent complex hierarchical decision systems where one agent takes a decision based on the decisions taken by other agents, or complex simulation systems where the inputs of a system are given by the outputs (or states) of other systems  \cite{amaral2014decomposition,marque2019efficient,sanson2019systems}; see also the discussion in \cite{poggio2017why}. This class of functions has been shown  by Mhaskar and Poggio \cite{mhaskar2016deep} to allow for efficient approximations -- with a weak dimension-dependence {under certain conditions} -- by deep neural networks.
To obtain convergence estimates for tree tensor networks, we develop two techniques based on estimates of linear widths and on a direct constructive encoding of compositions. 
On the other hand, to put these convergence results into context, we also revisit the approximation of functions of (mixed) Sobolev regularity by tree tensor networks. By a similar technique based on linear widths, we extend and refine estimates from \cite{Schneider201456}. The approximation of Sobolev functions by tree tensor networks has recently also been considered in \cite{2019arXiv190304234G}; there, however, semidiscrete approximation rates in terms of tensor ranks are obtained from singular value estimates, without discretization in the tensor modes.

The approximations by tree tensor networks that we consider are associated to \emph{dimension trees}, which are assumed to be fixed in advance. An example of such a tree is shown in Figure \ref{fig:example_tree}; in general, for a tensor of order $d$, the set $D= \{1,\ldots,d\}$ of modes is recursively subdivided up to the singletons $\{1\}$,\ldots, $\{d\}$. The set of all nodes resulting from this subdivision is then denoted by $T$. The most common choice here is a binary tree, where each interior node of the tree has two children. A tree tensor network with $T$-ranks bounded by $r = (r_\alpha)_{\alpha \in T}$ is a multivariate function $v$ that admits for each $\alpha \in T$ a representation $v(x) = \sum_{k=1}^{r_\alpha} v^\alpha_{k}(x_\alpha)v^{\alpha^c}_k(x_{\alpha^c})$ for some functions $v^\alpha_k$ and $v^{\alpha^c}_k$ of complementary groups of variables $x_\alpha$ and $x_{\alpha^c}$, $\alpha^c = D\setminus \alpha $. For functions in a Hilbert tensor space equipped with a canonical inner product, such a representation is related to the singular value decomposition of the $\alpha$-matricization (or $\alpha$-unfolding) of $v$, identified with a bivariate function. The  approximability of a function by tree tensor networks is therefore related to the decay of singular values of its $\alpha$-matricizations for each $\alpha\in T$.

\begin{figure}[h]
$$\begin{tikzpicture}[scale=.7]  
\tikzstyle{level 1}=[sibling distance=50mm]
\tikzstyle{level 2}=[sibling distance=20mm]
\tikzstyle{root}=[circle,draw,thick,ball color=black!50!gray]
\tikzstyle{interior}=[circle,draw,solid,thick,ball color=black!50!gray]
\tikzstyle{leaves}=[circle,draw,solid,thick,ball color=black!50!gray]
\tikzstyle{active}=[circle,draw,solid,thick,ball color=black!50!gray]

\node [root,label=above:{$\{1,2,3,4,5\}$}]  {}
  child {node [interior,label=above left:{$\{1,2,3\}$}]  {}
    child { node [leaves,label=below:{$\{1\}$}] {}} 
    child { node [interior,label=above right:{$\{2,3\}$}] {}
    child { node [leaves,label=below:{$\{2\}$}] {}}
    child { node [leaves,label=below:{$\{3\}$}] {}}   }      }
  child {node [interior,label=above right:{$\{4,5\}$}]  {}
   child { node [leaves,label=below:{$\{4\}$}] {}}
    child { node [leaves,label=below:{$\{5\}$}] {}}  
   }
       ;
\end{tikzpicture}$$
\caption{Example of a  dimension partition tree $T$ over $D=\{1,2,3,4,5\}$.}
\label{fig:example_tree}
\end{figure}
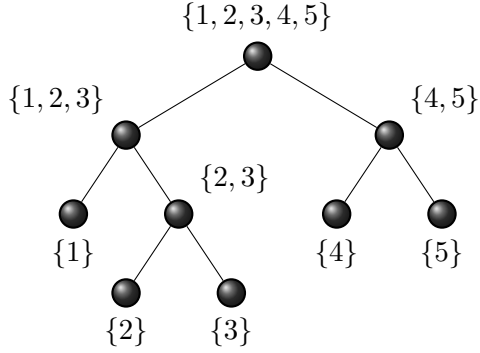

 The results on approximation of certain compositional functions by neural networks in \cite{mhaskar2016deep} are also based on the notion of (binary) dimension trees: the class of approximands considered there is comprised of functions that are compositions with a tree structure. For instance, the tree in Figure \ref{fig:example_tree} corresponds to compositions of the form
\[
  f(x) =  f_D \bigl(f_{\{ 1,2,3\}} \bigl(x_1, f_{\{2,3\}} (x_2,x_3) \bigr), f_{\{4,5\}}(x_4, x_5) \bigr) ,
\]
where the tree being binary corresponds to composing bivariate functions, and where the constituent functions are assumed to be at least Lipschitz continuous.

The general result from \cite{mhaskar2016deep} for approximating such compositions with {an} underlying tree of depth $L$ can be paraphrased as follows: \emph{ Assume that $f$ has compositional structure according to a binary dimension tree with $L$ levels, where each component function is Lipschitz continuous with Lipschitz constant $B > 0$ and has $s$ weak derivatives in $L^\infty$. Then for any smooth, non-polynomial activation function, there exists a neural network $\tilde f$ such that $\| f - \tilde f\|_{L^\infty} \leq \varepsilon$ with $\mathcal{O}( LB^L  \varepsilon^{-2/s} )$ coefficients. }

Note that since $B\leq 1$ is assumed in \cite{mhaskar2016deep}, the dependence on $L$ is not explicitly mentioned there. The dependence of $L$ on $d$ depends on the tree structure, with the most favorable dependence $L \sim \log d$ for a balanced tree: in this case, $B^L$ is polynomial in $d$. The proof is based on the following estimate: for functions $f, g,h$ satisfying the above assumptions with approximations $\tilde f, \tilde g, \tilde h$, one has
\begin{equation}
  \begin{aligned}
      \| f(g, h) - \tilde f( \tilde g , \tilde h) \|_{L^\infty} & \leq  \| f(g, h) - f(\tilde g , \tilde h) \|_{L^\infty} + \| f(\tilde g , \tilde h)  - \tilde f( \tilde g , \tilde h) \|_{L^\infty}  \\
        & \leq B ( \|g - \tilde g \|_{L^\infty} + \| h - \tilde h \|_{L^\infty} ) + \| f - \tilde f \|_{L^\infty}.
  \end{aligned} 
\end{equation}
Applying this estimate recursively starting from the root of the tree, the bound for the approximation complexity follows, using that each component function can be approximated separately by a neural network with $\mathcal{O}(\varepsilon^{-2/s})$ parameters; the composition of these approximations is then again a neural network.

One of the main results of the present work is that a very similar approximation complexity for this class of compositional functions can be achieved by approximations by tree tensor networks, with error measured in $L^2$ (for arbitrary $s$) or $L^\infty$ (with the restriction $s\le 2$). 
More specifically, we show that a tree tensor network approximation $\tilde f$ (that is, a composition of \emph{multilinear} mappings according to the same binary tree structure as the approximand) can be found such that accuracy $\varepsilon$ is achieved with $\mathcal{O}( L^3 B^{3L}  \varepsilon^{-3/s} )$ coefficients, possibly up to terms logarithmic in $\varepsilon$ that depend on the particular construction, and up to a constant polynomial in $d$. In other words, we obtain a very similar dependence on $d$ with $d$-independent convergence rate for tree tensor network approximations, which are substantially easier to handle numerically than approximations by neural networks. In fact, these tensor approximations can be constructed explicitly in certain cases.
The curse of dimensionality is thus shown to be avoided for tree tensor networks under very similar conditions as for deep neural networks.

\medskip
The outline of the paper is as follows. In  \Cref{sec:tree-tensor-networks}, we recall the definition of tree tensor networks and provide upper bounds for the best approximation error of a function in $L^2$ in terms of linear widths. 
In \Cref{sec:sobolev}, using these upper bounds based on linear widths, we provide approximation results  for functions with (mixed) Sobolev regularity. 
Finally in \Cref{sec:composition}, we consider the approximation of compositional functions by tree tensor networks and discuss the conditions under  which the curse of dimensionality is avoided. For the approximation in $L^2$, our proof is based on estimates of linear widths of compositional functions, while for the approximation in $L^\infty$, we use a constructive proof and provide an explicit encoding of an approximation that achieves the announced convergence rates.

\section{Linear widths and tree tensor networks}\label{sec:tree-tensor-networks}

In this section, we first discuss notions of linear widths in the context of multivariate functions.
We then recall the definition of the model class of functions in tree based tensor format (or tree tensor networks), which is interpreted as a particular class of compositional functions. Finally, in the case of square-integrable functions on the unit cube in $d$ dimensions, we deduce upper bounds of the best approximation error in terms of linear widths.

\subsection{Linear widths and singular value decomposition}\label{sec:widths}

We consider functions defined on the unit cube $\Xc = (0,1)^d$ with $d \geq 2$; other sets $\Xc$ with Cartesian product structure could be treated in the same manner in what follows, but we restrict ourselves to this special case for simplicity.
We denote by $D = \{1,\hdots,d\}$ the set of dimensions. 
Throughout this section, we assume $\alpha$ to be a nonempty strict subset of $D$, and we define $\alpha^c = D\setminus \alpha$.
We set  $\Xc_\alpha = (0,1)^{|\alpha|}$, and for $x = (x_1,\hdots,x_d)\in \Xc$, we write $x_\alpha = (x_\nu)_{\nu\in \alpha} \in \Xc_\alpha$.

For closed subspaces $V$ of Banach spaces $Y$, for the error of best approximation of $u \in Y$ by elements of $V$, we introduce the notation 
\[
  E(u,V)_{Y} = \inf_{v\in V} \Vert u-v \Vert_{Y}.
\]
Recall that the classical Kolmogorov $n$-width of a compact subset $K\subset Y$ then reads
\[
   d_n(K)_{Y} =  \inf_{\dim(V) = n}\, \sup_{u\in K} E(u,V)_{Y},
\]
where the infimum is taken over all $n$-dimensional subspaces $V\subset Y$.

In the following summary of basic notions of related linear widths of multivariate functions, we focus on functions in the tensor product Hilbert space 
\[
  X := L^2(\Xc) = L^2(\Xc_1) \otimes \cdots \otimes L^2(\Xc_d),
 \]
 where we abbreviate $X_\alpha := L^2(\Xc_\alpha)$.
We first note that by the canonical isomorphism $\Mc_\alpha \colon L^2(\Xc) \to L^2(\Xc_{\alpha^c} ; X_\alpha)$, any $f \in L^2(\Xc)$ can be isometrically identified with $f^\alpha := \Mc_\alpha f \in L^2(\Xc_{\alpha^c} ; X_\alpha )$ given by $f^\alpha : x_{\alpha^c} \mapsto f(\cdot,x_{\alpha^c})$.
For a given closed subspace $V \subset  X_\alpha$, we define the projection
\[
   \Pc^\alpha_V f := \Mc_\alpha^{-1}\Bigl(  \argmin_{g^\alpha \in  L^2(\Xc_{\alpha^c} ; V) } \| f^\alpha - g^\alpha \|_{L^2(\Xc_{\alpha^c}; X_\alpha )}  \Bigr) ,
\]
which amounts to applying the $L^2$-orthogonal projection onto $V$ to $f^\alpha(x_{\alpha^c})$ for each $x_{\alpha^c}$. For more details on projections on tensor spaces, see also \cite{Nouy2019}.

We now introduce an average linear width associated to $f$ and $\alpha$ as
\begin{equation}\label{average-linear-width}
  \delta^\alpha_n(f) = \inf_{\dim(V) = n} \left(  \int_{\Xc_{\alpha^c}} E\bigl(f^\alpha(x_{\alpha^c}), V\bigr)^2_{X_\alpha} \, dx_{\alpha^c} \right)^{1/2} .
\end{equation}
As we shall now describe, these widths are closely connected to low-rank approximations of $f$. To this end, we define the compact operator
\[
  \mathcal{S}^\alpha_f \colon X_{\alpha^c} \to X_\alpha, \; v \mapsto \int_{\Xc_{\alpha^c}} f^\alpha\, v \,dx_{\alpha^c} \,.
\]
We then define the $\alpha$-rank of $f$ by
\[
   \rank_\alpha(f) := \dim \operatorname{Range} \mathcal{S}^\alpha_f\,,
\]
which in general may be infinite. Note that $\rank_\alpha(g) \leq n$ implies that $g$ can be written in the form
\[
    \sum_{k=1}^n u_k (x_\alpha)\, v_k(x_{\alpha^c})
\]
with functions $u_k \in X_\alpha$, $v_k \in X_{\alpha^c}$ for $k=1,\ldots, n$.

The operator $\mathcal{S}^\alpha_f$ admits a singular value decomposition (see, e.g., \cite[Section 4.4.3]{hackbusch2019tensor}); let $(\sigma^\alpha_k)_{k\geq 1}$ be the non-increasing, non-negative sequence of singular values. Then it is easy to see that for each $n\in\Nbb$,
\[
   \delta^\alpha_n (f) =  \min_{\rank_\alpha(v) \le n} \Vert f - v \Vert_{X} = \Big(\sum_{k>n} (\sigma_k^\alpha)^2\Big)^{1/2}\,;
\]
in other words, $\delta_n^\alpha(f)$ is the error of $L^2$-best approximation of $f$ of $\alpha$-rank $n$.
Moreover, if $U_n \subset X_\alpha$ is a principal subspace of $\mathcal{S}^\alpha_f$ associated to $n$ largest singular values, then 
\[
    \delta^\alpha_n (f) = \| f - \Pc^\alpha_{U_n} f\|_X = \min_{\dim(V)=n} \| f - \Pc^\alpha_V f\|_X ,
\]
that is, such best approximations of $\alpha$-rank at most $n$ can be obtained from the singular value decomposition. As a further consequence, note that
\begin{equation}\label{deltasymm}
  \delta^\alpha_n = \delta^{\alpha^c}_n, \quad n \in \Nbb.
\end{equation}

\subsection{Tree-based tensor formats}

We next introduce some notions that are fundamental to tree-based tensor formats; for further details, we refer to \cite{hackbusch2019tensor,BSU}.
Let $T$ be a  dimension partition tree over $D=\{1,\hdots,d\} $ (see an example on Figure \ref{fig:example_tree}). For any node $\alpha \in T$, we denote by $S(\alpha) \subset T$ the set of sons of $\alpha$, which forms a partition of $\alpha$. $S(\alpha)$ is either empty or {has cardinality} $\#S(\alpha)\ge 2$. 
If $S(\alpha) = \emptyset$, $\alpha$ is called a leaf of $T$. We let $\Lc(T)$ be the set of leaves of $T$ and write $\Ic(T) = T\setminus \Lc(T)$ for the interior nodes of $T$.  

We let $\level(\alpha)$ be the level of $\alpha$ in $T$.  We use the convention $\level(D) = 0$ and for any $\beta \in T\setminus \{D\}$ such that $\beta \in S(\alpha)$, we define $\level(\beta) = \level(\alpha)+1$. Also, we define the depth of $T$ as $\depth(T) = \max \{\level(\alpha) : \alpha\in T\}.$ We set  $T_\ell = \{\alpha\in T : \level(\alpha) = \ell\}$ for $0\le \ell \le \depth(T)$ and $d_\ell = \# T_\ell$.

\begin{example}[Trivial tree]\label{ex:trivial-tree}
The trivial tree $T = \{D,\{1\},\hdots,\{d\}\}$ has a single interior node $D$ and $\depth(T)=1$.
\end{example}
\begin{example}[Linear binary tree]\label{ex:linear-tree}
The linear binary tree \[  T=\{\{1\},\hdots,\{d\},\{1,2\},\hdots,\{1,\hdots,d-1\},D\} \] satisfies $\depth(T)=d-1$ and $T_\ell = \{\{1,\hdots,d-\ell\},\{d-\ell+1\}\}$ for $1\le \ell \le d-1$.
\end{example}
\begin{example}[Balanced binary tree]\label{ex:balanced-tree}
For a balanced binary tree $T$, $\depth(T) = \lceil\log_2(d) \rceil$. For $\ell \le \depth(T)$, we have  $\#T_\ell \le 2^\ell$ and $\#\alpha \le \lceil \frac{d}{2^\ell} \rceil$ for all $\alpha \in T_\ell.$
\end{example}
	
Let $X$ be a tensor product space of  multivariate functions. For a tuple $r=(r_\alpha)_{\alpha \in T}$ (with $r_D = 1$), 
we define a tree-based tensor format in $X$ as 
$$
\Tc_r^T(X) = \bigl \{v \in X : \rank_{\alpha}(v)\le r_\alpha, \alpha \in T  \bigr\}.
$$ 
Tensors satisfying these rank constraints are also known as \emph{hierarchical tensors} \cite{hackbusch2009newscheme} or as \emph{tree tensor network states} in quantum physics \cite{szalay2015tensor}.
Letting $U = U_1\otimes \hdots \otimes U_d$ be a subspace of $X$, where the $U_\nu$ are finite-dimensional subspaces of functions defined on $\Xc_\nu$, we also define 
$$
\Tc_r^T(U) = \{v \in U : \rank_{\alpha}(v)\le r_\alpha\} = \Tc_r^T(X) \cap U.
$$ 
A tuple $r$ is called \emph{admissible} if $\Tc_r^T(X) \neq \emptyset$.

\subsection{Tree based tensor formats as compositional functions and tensor networks}\label{sec:parametrization-tree-networks}
We let $\{\varphi^\nu_{i}\}_{i=1}^{n_\nu}$ denote a basis of $U_\nu$, and introduce the map $\varphi^\nu : \Xc_\nu \to \Rbb^{n_\nu}$ such that $\varphi^\nu(x_\nu) = (\varphi^\nu_{i}(x_\nu))_{i=1}^{n_\nu}$.
A function $f \in \Tc_r^T(U)$ can be parametrized by a set of multilinear functions $\{G^\alpha : \alpha \in T\}$, where $G^\alpha : \bigtimes_{\beta\in S(\alpha)} \Rbb^{r_\beta} \to \Rbb^{r_\alpha} $ for $\alpha \in \Ic(T)$ is multilinear, and $G^\alpha : \Rbb^{n_\alpha} \to \Rbb^{r_\alpha}$ for $\alpha \in \Lc(T)$ is linear. The function $f$  can be written 
$$
f(x) = G^D((z_\alpha)_{\alpha \in S(D)}),
$$
with $z_\alpha = \varphi^\alpha(x_\alpha)$ for a leaf node $\alpha \in \Lc(T)$, and 
$$
z_\alpha = G^{\alpha}((z_\beta)_{\beta \in S(\alpha)})
$$
for an interior node $\alpha\in \Ic(T)$. 

The multilinear functions $G^\alpha$ can be identified with tensors of order $\#S(D)$ for $\alpha=D$, $1+\#S(\alpha)$ for $\alpha \in \Ic(T)\setminus \{D\}$ and $2$ if $\alpha\in \Lc(T).$ This yields the interpretation of 
the tree-based format as a tree tensor network.

\begin{example}
For the tree $T$ of Figure \ref{fig:example_tree}, $f \in \Tc_r^T(U)$ can be written
$$
f(x) = G^D(G^{\{1,2,3\}}(G^{\{1\}}(z_1),G^{\{2,3\}}(G^{\{2\}}(z_2),G^{\{3\}}(z_3))),G^{\{4,5\}}(G^{\{4\}}(z_4),G^{\{5\}}(z_5)))
$$
where $z_\nu = \varphi^{\nu}(x_\nu)$, $1\le \nu\le d.$
\end{example}
\begin{example}[Trivial tree and Tucker format]
For the trivial tree of \Cref{ex:trivial-tree}, $\Tc_r^T(U)$  corresponds to the Tucker format and $f\in \Tc_r^T(U)$ can be written
$$
f(x) = G^D(\varphi^{1}(x_1),\hdots,\varphi^{d}(x_d)).
$$
\end{example}
\begin{example}[Linear tree and tensor train format]
For the linear binary tree of \Cref{ex:linear-tree}, $\Tc_r^T(U)$ corresponds to the tensor train (TT) Tucker format.
\end{example}	

The number of parameters (or representation complexity) of an element  in $\Tc^T_r(U)$ is 
$$
N(T,r,U) = \sum_{\alpha \in \Ic(T)} r_\alpha \prod_{\beta\in S(\alpha)} r_\beta + \sum_{\nu=1}^d  r_\nu n_\nu, 
$$
with $n_\nu = \dim(U_\nu).$
If $ r_\alpha \le R$ for all $\alpha$ and $\dim(U_\nu) \le n$ for all $\nu$, then 
\[
  N(T,r,U) \le R^a + (\#T-1-d)R^{a+1} + d Rn \le R^a +  (d-2)R^{a+1} + dRn,
\]
where $a = \max_{\alpha \in \Ic(T)} \#S(\alpha)$ is the arity of the tree ($a=2$ for a binary tree, and $a=d$ for a trivial tree).

\subsection{Best approximation error and linear widths}

Let $T$ be a fixed dimension tree and 
$r = (r_\alpha)_{\alpha\in T}$ be an admissible rank. For any subspace $U\subset X = L^2(\Xc)$, 
the error of best approximation of a function $f\in X$ by an element of $\Tc^T_r(U)$ is
$$
e_{r,U}^T(f)_X = \inf_{v \in \Tc^T_r(U)} \Vert f - v \Vert_X\,,
$$ 
and the error of best approximation of a function $f\in X$ by an element of $\Tc^T_r(X)$ is 
 $$ e_{r}^T(f)_X := e_{r,X}^T(f)_X .$$

The following result provides an upper bound of the best approximation error with tree tensor networks in terms of linear widths of $f$. The argument is similar to the one for the discrete case given in \cite{grasedyck2010}.
\begin{proposition}\label{prop:best-approx-rU}
Let $f \in X$ and let $r\in \Nbb^{\#T}$ be an admissible rank. Then
\begin{equation}\label{eq:treeerr1}
e_r^T(f)^2_{X} \le  \sum_{\alpha \in T\setminus \{D\}} \bigl( \delta^\alpha_{r}(f) \bigr)^2.
\end{equation}
Furthermore, 
for any finite-dimensional subspace $U = U_1  \otimes \hdots \otimes U_d$, we have
\begin{equation}\label{eq:treeerr2}
e_{r,U}^T(f)^2_{X} \;\le\; \sum_{\nu=1}^d \int_{\Xc_{\nu^c}} E\bigl(f^\nu(x_{\nu^c}),U_\nu\bigr)_{X_{\nu}}^2 \,d x_{\nu^c} 
 + \sum_{\alpha \in A\setminus \{D\}} \bigl( \delta^\alpha_{r} (f) \bigr)^2 \,,
\end{equation}
with $A = \Ic(T)$ if $\dim(U_\nu)=r_\nu$ for all $1\le \nu\le d$, or $A = T$ otherwise. 
\end{proposition}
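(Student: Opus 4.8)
The plan is to establish both estimates by an explicit hierarchical truncation of $f$, adapting the analysis of the hierarchical singular value decomposition of \cite{grasedyck2010} to the present $L^2$ setting; essentially the only difference from the discrete case is that $\alpha$-matricizations are replaced by the compact operators $\mathcal{S}^\alpha$ of \Cref{sec:widths} together with their singular value decompositions. For \eqref{eq:treeerr1} I would build a candidate $\tilde f$ as follows. Enumerate $T\setminus\{D\}$ as $\alpha_1,\dots,\alpha_m$ in order of \emph{non-increasing} level (so that leaves are treated first and the sons of $D$ last), put $g_0:=f$, and recursively let $\hat U_{\alpha_j}\subset X_{\alpha_j}$ be a principal subspace of $\mathcal{S}^{\alpha_j}_{g_{j-1}}$ associated to its $r_{\alpha_j}$ largest singular values, $P_{\alpha_j}:=\Pc^{\alpha_j}_{\hat U_{\alpha_j}}$, and $g_j:=P_{\alpha_j}g_{j-1}$; finally $\tilde f:=g_m$.

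The key structural fact I would use is that every interior node is processed only after all of its sons, so that the iterate reaching a node $\alpha$ already belongs to $\bigl(\bigotimes_{\beta\in S(\alpha)}\hat U_\beta\bigr)\otimes X_{\alpha^c}$; hence the chosen subspaces are \emph{nested}, $\hat U_\alpha\subseteq\bigotimes_{\beta\in S(\alpha)}\hat U_\beta$, and iterating along tree branches, $\hat U_{\alpha_i}\subseteq\hat U_{\alpha_j}\otimes X_{\alpha_i\setminus\alpha_j}$ whenever $\alpha_j\subsetneq\alpha_i$. From this I would deduce, first, that the projections $P_{\alpha_1},\dots,P_{\alpha_m}$ pairwise commute (nestedness handles nested nodes, disjointness of mode groups the rest) and that $\tilde f\in\hat U_{\alpha_j}\otimes X_{\alpha_j^c}$ for each $j$ — every later projection acts either within $X_{\alpha_j^c}$ or is an ancestor projection mapping $\hat U_{\alpha_j}\otimes X_{\alpha_j^c}$ into itself — whence $\rank_{\alpha_j}(\tilde f)\le r_{\alpha_j}$ and $\tilde f\in\Tc_r^T(X)$. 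Second, telescoping $f-\tilde f=\sum_{j=1}^m(I-P_{\alpha_j})g_{j-1}$, the $m$ summands are pairwise orthogonal, since for $k>j$ the vector $(I-P_{\alpha_k})g_{k-1}$ lies in $\range P_{\alpha_j}$ (as $g_{k-1}$ has already been acted on by $P_{\alpha_j}$ and the intervening projections, being ancestor or disjoint projections relative to $\alpha_j$, preserve $\range P_{\alpha_j}$), whereas $(I-P_{\alpha_j})g_{j-1}\in\range(I-P_{\alpha_j})$. Therefore
\[
e_r^T(f)^2_X\le\|f-\tilde f\|_X^2=\sum_{j=1}^m\bigl\|(I-P_{\alpha_j})g_{j-1}\bigr\|_X^2=\sum_{j=1}^m\bigl(\delta^{\alpha_j}_{r_{\alpha_j}}(g_{j-1})\bigr)^2,
\]
the last equality by the defining property of $\hat U_{\alpha_j}$ and the SVD characterization of $\delta^\alpha_n$. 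It then remains to pass from $g_{j-1}$ to $f$: peeling off $P_{\alpha_{j-1}},\dots,P_{\alpha_1}$ one at a time, each of them — being either a descendant projection of $\alpha_j$, of the form $(\text{contraction on }X_{\alpha_j})\otimes\id$, or a disjoint projection $\id\otimes(\text{contraction on }X_{\alpha_j^c})$ — post- or pre-composes $\mathcal{S}^{\alpha_j}$ with a norm-$\le 1$ operator and hence does not increase any singular value $\sigma^{\alpha_j}_k$, so $\delta^{\alpha_j}_{r_{\alpha_j}}(g_{j-1})\le\delta^{\alpha_j}_{r_{\alpha_j}}(f)$. Summing over $j$ gives \eqref{eq:treeerr1}.

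For \eqref{eq:treeerr2} I would first discretize the modes: let $P_{U_\nu}$ be the orthogonal projection onto $U_\nu$ acting in the $\nu$-th variable, set $P_U:=P_{U_1}\cdots P_{U_d}$ (these commute) and $f_U:=P_Uf\in U$; the same telescoping-plus-orthogonality argument gives $\|f-f_U\|_X^2\le\sum_{\nu=1}^d\|(I-P_{U_\nu})f\|_X^2=\sum_{\nu=1}^d\int_{\Xc_{\nu^c}}E\bigl(f^\nu(x_{\nu^c}),U_\nu\bigr)_{X_\nu}^2\,dx_{\nu^c}$. I would then run the truncation of the first part on the finite-dimensional tensor $f_U$, over the node set $\Ic(T)$ if $\dim U_\nu=r_\nu$ for all $\nu$ (in which case $f_U$ already meets the leaf rank bounds) and over all of $T$ otherwise; since each dominant subspace encountered lies in the corresponding tensor product of the $U_\nu$, the iterates stay in $U$, producing some $v\in\Tc_r^T(U)$ with $\|f_U-v\|_X^2\le\sum_{\alpha\in A\setminus\{D\}}(\delta^\alpha_{r_\alpha}(f_U))^2\le\sum_{\alpha\in A\setminus\{D\}}(\delta^\alpha_{r_\alpha}(f))^2$, the last step because $f_U$ is obtained from $f$ by single-mode contractions. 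Since $f-f_U\perp U\ni f_U-v$, we get $\|f-v\|_X^2=\|f-f_U\|_X^2+\|f_U-v\|_X^2$, which is exactly \eqref{eq:treeerr2}.

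I expect the delicate point to be the accounting in the first part: obtaining the sum of squared linear widths \emph{without} any dimension-dependent constant rests on choosing the leaves-to-root processing order, which is precisely what forces the successive dominant-subspace projections to be nested — hence mutually commuting — and on combining this with the monotonicity of the singular values $\sigma^\alpha_k$ under the earlier truncations. Once these two facts are in place the Pythagorean identity and the estimate follow at once; the mode discretization, the reduction to $\Ic(T)$, and the passage to compact operators in the continuous setting are routine.
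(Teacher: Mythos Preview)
Your argument is correct, but it follows a genuinely different route from the paper's. The paper constructs the candidate by applying projections in the \emph{opposite} order, root to leaves: it fixes arbitrary $r_\alpha$-dimensional subspaces $V_\alpha\subset X_\alpha$ in advance, sets $f_r=\Pc_{L+1}\Pc_L\cdots\Pc_1 f$ with $\Pc_\ell=\prod_{\alpha\in T_\ell}\Pc^\alpha_{V_\alpha}$ (and $\Pc_{L+1}$ the product of the $\Pc^{\{\nu\}}_{U_\nu}$), and then peels off one level at a time via
\[
\|f-\Pc_{L+1}\cdots\Pc_1 f\|^2=\|f-\Pc_{L+1}f\|^2+\|\Pc_{L+1}(f-\Pc_L\cdots\Pc_1 f)\|^2\le\|f-\Pc_{L+1}f\|^2+\|f-\Pc_L\cdots\Pc_1 f\|^2,
\]
using only orthogonality of $\range\Pc_{L+1}$ and $\ker\Pc_{L+1}$ together with $\|\Pc_{L+1}\|\le 1$. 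Iterating gives $\|f-f_r\|^2\le\sum_\ell\|f-\Pc_\ell f\|^2\le\sum_{\alpha\in T\setminus\{D\}}\|f-\Pc^\alpha_{V_\alpha}f\|^2$, and taking the infimum over each $V_\alpha$ separately yields \eqref{eq:treeerr1}; for \eqref{eq:treeerr2} the $U_\nu$-projections simply appear as the extra layer $\Pc_{L+1}$.

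The trade-off is clear. The paper's argument is more elementary: it needs neither the nestedness of the dominant subspaces, nor commutativity of the resulting projections, nor the singular-value monotonicity under contractions, because each term in the final sum involves $f$ directly rather than an intermediate iterate. Your leaves-to-root argument, on the other hand, shows that the \emph{actual} hierarchical SVD truncation (with dominant subspaces computed from the successively compressed tensor) achieves the bound, which is the computationally relevant statement; and the intermediate Pythagorean identity you obtain is exact, not just an inequality. Both approaches ultimately rely on the same root-to-leaves/leaves-to-root duality present already in the discrete analysis of \cite{grasedyck2010}.
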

\begin{proof}
We first show that for any finite-dimensional subspace $U = U_1\otimes \hdots \otimes U_d $,  and any collection of subspaces $V_\alpha \subset X_\alpha$ with $ \dim(V_\alpha) = r_\alpha,$ $\alpha\in T\setminus \{D\}$, with $A$ as in the hypothesis we have 
\begin{equation}
e^T_r(f)_X \le e^T_{r,U}(f)_X \le  \sum_{\nu=1}^d \Vert f - \Pc^{\{\nu\}}_{U_\nu} f \Vert_{X}^2  + \sum_{\alpha \in A\setminus \{D\}} \Vert f - \Pc^\alpha_{V_\alpha} f \Vert_{X}^2.
\label{best-error-bound-Ualpha-bis}
\end{equation}
The result will be proved by constructing a particular approximation $f_r \in \Tc_r^T(U) $ and by providing an upper bound of $\Vert f - f_r \Vert_X^2$.
We define the approximation 
\begin{equation*}
f_r = \Pc_{L+1} \Pc_{L}\hdots \Pc_{1} f, \label{root-to-leaves-projection}
\end{equation*}
where $L = \depth(T)$,  
$ %
\Pc_{\ell} = \prod_{\alpha\in T_\ell} \Pc^\alpha_{V_\alpha}, 
$ %
for $1\le \ell \le L,$ and $ \Pc_{L+1} = \Pc_{U_1}^{\{1\}}  \hdots \Pc^{\{d\}}_{U_d} $. For disjoint subsets  $\alpha$ and $\beta$, the projections $\Pc^\alpha_{V_\alpha}$ and $\Pc^\beta_{V_\beta}$ commute. Therefore, 
the definition of $\Pc_\ell$ does not depend on the order of projections $\Pc^\alpha_{V_\alpha}$, $\alpha\in T_\ell$.

Let us first prove that $f_r\in \Tc_r^T(U) $. We clearly have $f_r \in U$. 
Then we note that 
for any function $g$ and any pair $\alpha,\beta \in T$ such that $\beta \subset \alpha$ or $\beta \subset \alpha^c$, we have $\rank_{\alpha}(\Pc_{V_\beta} g) \le \rank_\alpha(g)$ for any subspace $V_\beta$ in $X_\beta$. Then for $\alpha \in T$ with level $\ell$, since the projections  
$\Pc_{\ell'}$ with $\ell'>\ell$ only involve projections $\Pc^\beta_{V_\beta}$ with $\beta \subset \alpha$ or $\beta \subset \alpha^c$, we have 
$\rank_{\alpha}(f_r) \le \rank_{\alpha}(\Pc_{\ell} \hdots \Pc_1 f) = \rank_\alpha(\Pc^\alpha_{V_\alpha} g) \le r_\alpha$, where 
$g = \prod_{\beta \in T_\ell,\beta\neq \alpha} \Pc^\beta_{V_\beta} \Pc_{\ell-1} \hdots \Pc_1 f .$ This proves that $\rank_\alpha(f_r) \le r_\alpha$ for all $\alpha \in T$, which implies $f_r \in  \Tc^T_r(X)$. We therefore deduce that $f_r \in  \Tc^T_r(X) \cap U =  \Tc^T_r(U)$.

Now let us provide the desired upper bound for $\Vert f - f_r \Vert_X$. 
For clarity, we let $\Vert\cdot\Vert = \Vert \cdot \Vert_{{X}}.$ Using the properties of orthogonal projections, we have
\begin{align*}
\Vert f - f_r \Vert^2& = \Vert f-  \Pc_{L+1}\hdots \Pc_{1} f \Vert^2 =\Vert f - \Pc_{L+1} f \Vert^2   + \Vert \Pc_{L+1}  (f -  \Pc_{L-1}\hdots \Pc_{1} f)  \Vert^2
\\
&\le \Vert f - \Pc_{L+1} f \Vert^2 +  \Vert f - \Pc_{L}\hdots \Pc_{1} f  \Vert^2
\end{align*}
Repeating the above arguments, we obtain
$
\Vert f - f_r \Vert^2 \le  \sum_{1\le \ell \le L+1}\Vert f - \Pc_\ell f \Vert^2. $ 
For $1\le \ell \le L$, we have
$\Vert f - \Pc_\ell f \Vert^2 = \Vert f - \prod_{\alpha\in T_\ell} \Pc^\alpha_{V_\alpha} f \Vert^2 \le \sum_{\alpha\in T_\ell} \Vert f - \Pc^\alpha_{V_\alpha} f\Vert^2,$ which provides the desired bound for the general case.
In the case where $\dim(U_\nu)=r_\nu$ for $1\le \nu\le d$, the result is deduced from the above result by choosing $V_\nu = U_\nu$ for $1\le \nu\le d$ in the definition of $f_r$, and by defining $\Pc_{L+1}=\id$. 

Now \eqref{eq:treeerr1} follows from \eqref{best-error-bound-Ualpha-bis} by taking 
 the infimum over spaces $U_\alpha$ and $V_\alpha$,
 and \eqref{eq:treeerr2} follows from \eqref{best-error-bound-Ualpha-bis} by taking 
 the infimum over spaces $V_\alpha$.
\end{proof}

\section{Approximation of functions in Sobolev spaces}\label{sec:sobolev}
In this section, we consider the approximation of functions in Sobolev spaces on $\Xc = (0,1)^d$ using tree tensor networks:
on the one hand, the standard fractional Sobolev spaces $H^s(\Xc)$ for $s>0$, and on the other hand, the mixed Sobolev spaces $H^s_\mix(\Xc)$, which can be characterized as tensor products $H^s_\mix(\Xc) = H^s(0,1) \otimes \cdots \otimes H^s(0,1)$ with the canonical cross norm. Assuming a dimension tree $T$ for $D$, we again write $\Xc_\alpha = (0,1)^{|\alpha|}$ for $\alpha \in T$ and abbreviate $H^s_\alpha = H^s(\Xc_\alpha)$ and $H^s_{\alpha,\mix} = H^s_\mix(\Xc_\alpha)$.

\subsection{Sobolev spaces}

We first recall a standard result on Kolmogorov widths of Sobolev balls (see, e.g., \cite[Chapter  VII]{pinkus2012n}).
Here and in what follows, we denote by $B_1(X)$ the unit ball of a given normed space $X$.

\begin{theorem}\label{kolmogorov-width-sobolev-balls}
Let $I = (0,1)^m$. Then 
\[
d_n \bigl(B_1( H^{s}(I) )\bigr)_{L^2} \le R  n^{-s/m} ,
\]
where $R>0$ is independent of  $n$. 
 \end{theorem}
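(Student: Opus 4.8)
The plan is to produce, for each $n$, a single $n$-dimensional subspace $V_n \subset L^2(I)$ --- a space of piecewise polynomials on a uniform partition of $I=(0,1)^m$ --- for which $\sup_{u\in B_1(H^s(I))} E(u,V_n)_{L^2(I)} \le R\, n^{-s/m}$; since $d_n$ is the infimum of this quantity over all $n$-dimensional subspaces, exhibiting one good choice suffices. Fix $p=\lceil s\rceil-1$. For $N\in\Nbb$, let $\mathcal{Q}_N$ be the partition of $I$ into the $N^m$ congruent subcubes of side length $1/N$, and let $V_N$ be the space of functions on $I$ whose restriction to each $Q\in\mathcal{Q}_N$ is a polynomial of total degree at most $p$, so that $n_N := \dim V_N = \binom{p+m}{m}\,N^m =: c_{s,m}\,N^m$.

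First I would establish a localized Jackson estimate on a single cube by scaling the Bramble--Hilbert (Deny--Lions) lemma. On the reference cube $\widehat Q=(0,1)^m$, that lemma gives $\inf_{q}\|\widehat u-q\|_{L^2(\widehat Q)} \le C_{s,m}\,|\widehat u|_{H^s(\widehat Q)}$, the infimum over polynomials of total degree at most $p$ and $|\cdot|_{H^s}$ the order-$s$ (Slobodeckij) seminorm; this is exactly the situation in which the lemma applies, since polynomials of degree at most $p=\lceil s\rceil-1$ form the kernel of that seminorm on a connected domain, and one may take for $\widehat\Pi$ the $L^2(\widehat Q)$-orthogonal projection onto those polynomials. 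Transplanting to a cube $Q\in\mathcal{Q}_N$ of side length $1/N$ by an affine change of variables and keeping track of the powers of $N$ produced in the $L^2(Q)$ norm and in the order-$s$ seminorm under this rescaling yields, for the induced (local $L^2$-orthogonal) projection $\Pi_Q$,
\[
  \|u-\Pi_Q u\|_{L^2(Q)} \;\le\; C_{s,m}\,N^{-s}\,|u|_{H^s(Q)}, \qquad u\in H^s(Q).
\]
Defining $\Pi u\in V_N$ by $(\Pi u)|_Q=\Pi_Q u$ for each $Q\in\mathcal{Q}_N$, squaring, summing over $Q$, and using that the order-$s$ seminorm is superadditive over the partition (immediate for the Slobodeckij seminorm, since the integration domains $Q\times Q$ are pairwise disjoint in $I\times I$; for integer $s$ it is the additivity of $\sum_{|\beta|=s}\|D^\beta u\|^2_{L^2(\cdot)}$), so that $\sum_{Q}|u|^2_{H^s(Q)}\le|u|^2_{H^s(I)}\le\|u\|^2_{H^s(I)}$, we obtain
\[
  E(u,V_N)_{L^2(I)} \;\le\; \|u-\Pi u\|_{L^2(I)} \;\le\; C_{s,m}\,N^{-s}\,\|u\|_{H^s(I)} .
\]
Hence $d_{n_N}\bigl(B_1(H^s(I))\bigr)_{L^2}\le C_{s,m}\,N^{-s} = C_{s,m}\,c_{s,m}^{s/m}\,n_N^{-s/m}$.

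Finally I would remove the restriction to $n$ of the form $c_{s,m}N^m$. Since an $n$-dimensional subspace embeds into an $(n+1)$-dimensional one, $d_n$ is non-increasing in $n$. Given $n\ge c_{s,m}$, let $N$ be the largest integer with $n_N\le n$; then $n<n_{N+1}=c_{s,m}(N+1)^m\le 2^m c_{s,m}N^m=2^m n_N$, so $d_n\le d_{n_N}\le C_{s,m}\,c_{s,m}^{s/m}\,2^{s}\,n^{-s/m}$, while for $1\le n<c_{s,m}$ the trivial bound $E(u,V)_{L^2}\le\|u\|_{L^2}\le 1$ gives $d_n\le 1\le c_{s,m}^{s/m}\,n^{-s/m}$; taking $R$ to be the largest of these constants proves the theorem. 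An alternative to the fractional Bramble--Hilbert lemma is to prove the Jackson estimate on a cube only for integer $s=k$ (the classical Deny--Lions lemma with polynomials of degree $k-1$) and then interpolate: the spaces $V_N$ with $p=\lceil s\rceil-1$, $k=\lceil s\rceil$, satisfy both $E(u,V_N)_{L^2}\le\|u\|_{L^2}$ and $E(u,V_N)_{L^2}\le C_{k,m}N^{-k}\|u\|_{H^k}$, so $E(u,V_N)_{L^2}\le K\bigl(C_{k,m}N^{-k},u;L^2(I),H^k(I)\bigr)$ for the $K$-functional, together with $H^s(I)=\bigl(L^2(I),H^k(I)\bigr)_{s/k,2}$ for non-integer $s$, again yields $E(u,V_N)_{L^2}\le C\,N^{-s}\|u\|_{H^s}$. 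Either way, the only ingredient beyond elementary scaling, summation and counting is the (fractional) Bramble--Hilbert estimate on the reference cube, which is entirely standard (see, e.g., \cite{pinkus2012n}); verifying that estimate --- and, for fractional $s$, the correct dilation behaviour of the Slobodeckij seminorm --- is the only real technical point I would expect.
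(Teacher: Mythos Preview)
Your proof is correct, but note that the paper does not actually prove this statement: it is quoted as a standard result with a reference to Pinkus's monograph, so there is no ``paper's own proof'' to compare against. What you have written is a clean, self-contained derivation of the upper bound via piecewise polynomials on a uniform mesh together with the (fractional) Bramble--Hilbert/Deny--Lions lemma and a scaling argument; the handling of fractional $s$ both directly and via $K$-functional interpolation is fine, and the superadditivity of the Slobodeckij seminorm over the partition is justified correctly. The only thing worth flagging is cosmetic: since the theorem as stated is just the upper bound $d_n\le R\,n^{-s/m}$, your construction of a single good subspace is exactly what is needed, and you might simply remark that the matching lower bound (also classical) is not required here.
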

 It is well known that there exist approximation tools, such as splines or wavelets, that achieve the optimal rate of convergence given by the Kolmogorov widths \cite{DeVore98nonlinearapproximation}. In other {words}, there exists a sequence of $n$-dimensional spaces $V_n  \subset L^2(I)$ such that for  all $f\in H^{s}(I)$, 
 $$E(  f ,V_n)_{{X_\alpha}} \le  M n^{-s/m} \Vert f \Vert_{H^{s}}$$
where $M \ge R$ is a constant independent of $f$ and $n$.
For $f \in H^{s}(\Xc)$, $s>0$, from this bound we deduce the following estimate on the average linear widths of $f$ defined in \eqref{average-linear-width}.

\begin{proposition}\label{prop:dp-sobolev}
Let $f\in  H^{s}(\Xc)$, $s>0$. For any $\alpha \in T\setminus \{D\}$, we  have 
\[
  \delta^\alpha_n(f) \le C n^{-s/d_\alpha}\Vert f \Vert_{H^{s}} 
\]
with $d_\alpha = \min\{\#\alpha,d-\#\alpha\}$, and $C$ independent of $r$ and $f$, but depending on $d_\alpha$ and $s$.
\end{proposition}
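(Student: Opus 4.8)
The plan is to reduce the estimate on the average linear width $\delta^\alpha_n(f)$ to the Kolmogorov width bound of Theorem~\ref{kolmogorov-width-sobolev-balls} applied on the cube $\Xc_\alpha = (0,1)^{\#\alpha}$ (or on $\Xc_{\alpha^c}$, whichever has fewer dimensions). First I would recall from \eqref{deltasymm} that $\delta^\alpha_n(f) = \delta^{\alpha^c}_n(f)$, so without loss of generality we may assume $\#\alpha \le \#\alpha^c$, i.e.\ $d_\alpha = \#\alpha$; this is what lets us get the better exponent $-s/d_\alpha$ with $d_\alpha = \min\{\#\alpha, d-\#\alpha\}$ rather than $-s/\#\alpha$. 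Let $m = \#\alpha$. By the comment following Theorem~\ref{kolmogorov-width-sobolev-balls}, there is a sequence of $n$-dimensional subspaces $V_n \subset L^2(\Xc_\alpha) = X_\alpha$ such that $E(g, V_n)_{X_\alpha} \le M n^{-s/m}\|g\|_{H^s(\Xc_\alpha)}$ for every $g \in H^s(\Xc_\alpha)$.

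The key step is then to apply this pointwise in $x_{\alpha^c}$. For $f \in H^s(\Xc)$, the slice $f^\alpha(x_{\alpha^c}) = f(\cdot, x_{\alpha^c})$ lies in $H^s(\Xc_\alpha)$ for a.e.\ $x_{\alpha^c}$, since restricting a function of Sobolev regularity $s$ on the full cube to a coordinate subspace preserves (at least) the regularity $s$ in the remaining variables — more precisely, $\|f^\alpha(x_{\alpha^c})\|_{H^s(\Xc_\alpha)}$ is finite for a.e.\ $x_{\alpha^c}$ and, integrating, $\int_{\Xc_{\alpha^c}} \|f^\alpha(x_{\alpha^c})\|_{H^s(\Xc_\alpha)}^2 \, dx_{\alpha^c} \le C' \|f\|_{H^s(\Xc)}^2$, because the squared $H^s(\Xc_\alpha)$-norm is (up to equivalence of norms) an integral of squared partial-derivative/Gagliardo-type quantities in the $\alpha$-variables only, all of which are controlled by the full $H^s(\Xc)$-norm. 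Granting this, we estimate, using $V_n$ as a candidate subspace in the definition \eqref{average-linear-width} of $\delta^\alpha_n(f)$,
\begin{align*}
  \delta^\alpha_n(f)^2
  &\le \int_{\Xc_{\alpha^c}} E\bigl(f^\alpha(x_{\alpha^c}), V_n\bigr)_{X_\alpha}^2 \, dx_{\alpha^c} \\
  &\le M^2 n^{-2s/m} \int_{\Xc_{\alpha^c}} \|f^\alpha(x_{\alpha^c})\|_{H^s(\Xc_\alpha)}^2 \, dx_{\alpha^c}
  \;\le\; M^2 C' n^{-2s/m} \|f\|_{H^s(\Xc)}^2,
\end{align*}
and taking square roots gives $\delta^\alpha_n(f) \le C n^{-s/m}\|f\|_{H^s}$ with $C = M\sqrt{C'}$, which after the reduction $m = d_\alpha$ is exactly the claim. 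The constant depends on $m = d_\alpha$ and $s$ through $M$ and $C'$ but not on $n$ or $f$, and the dependence on $r$ mentioned in the statement is vacuous since $n$ plays that role.

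The main obstacle — really the only nontrivial point — is the slicing estimate $\int_{\Xc_{\alpha^c}} \|f^\alpha(x_{\alpha^c})\|_{H^s(\Xc_\alpha)}^2 \, dx_{\alpha^c} \lesssim \|f\|_{H^s(\Xc)}^2$ for fractional $s$. For integer $s$ this is immediate from the definition of the integer-order Sobolev norm (Fubini applied to $\sum_{|\beta|\le s}\|\partial^\beta f\|_{L^2}^2$, keeping only the multi-indices $\beta$ supported in $\alpha$). For non-integer $s$ one uses the Gagliardo–Slobodeckij seminorm: $\|g\|_{H^s(\Xc_\alpha)}^2 \sim \|g\|_{H^{\lfloor s\rfloor}(\Xc_\alpha)}^2 + \sum_{|\beta| = \lfloor s\rfloor,\, \mathrm{supp}\,\beta\subset\alpha} \iint_{\Xc_\alpha\times\Xc_\alpha} \frac{|\partial^\beta g(y)-\partial^\beta g(y')|^2}{|y-y'|^{m+2(s-\lfloor s\rfloor)}}\,dy\,dy'$, and integrating this identity in $x_{\alpha^c}$ produces exactly the part of the Gagliardo seminorm of $f$ on $\Xc$ that only "sees" increments in the $\alpha$-directions, which is bounded by the full seminorm. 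Alternatively one can invoke the standard trace/restriction theorem for Sobolev spaces on product domains, or a Fourier-analytic argument after extending $f$ to $\Rbb^d$. I would state this as a short lemma or simply cite it, since it is classical; the rest of the argument is the three-line display above.
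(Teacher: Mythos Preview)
Your proof is correct and follows essentially the same route as the paper: reduce to the smaller block via the symmetry \eqref{deltasymm}, apply the one-dimensional (in the sense of one tensor factor) Sobolev approximation bound to each slice $f^\alpha(x_{\alpha^c})$, and then control $\int_{\Xc_{\alpha^c}}\|f^\alpha(x_{\alpha^c})\|_{H^s_\alpha}^2\,dx_{\alpha^c}$ by $\|f\|_{H^s}^2$. The paper phrases the middle step slightly differently---normalizing each slice to the unit ball of $H^s_\alpha$ and bounding by the Kolmogorov width $d_n(B_1(H^s_\alpha))_{X_\alpha}$ rather than invoking a fixed sequence $V_n$---but this is just a cosmetic reformulation of the same estimate.
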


\begin{proof}
Let $f \in {H^{s}(\Xc)}$. Since $f^\alpha(x_{\alpha^c}) \in H^{s}_{\alpha}$ for almost all $x_{\alpha^c}$, for $x_{\alpha^c}$ such that $f^\alpha(x_{\alpha^c}) \neq 0$ we have
 \[
    E(  f^\alpha(x_{\alpha^c}) ,V_n)_{L^2_{\alpha}} = E\bigl( \|   f^\alpha(x_{\alpha^c}) \|^{-1}_{H^{s}_{\alpha}}   f^\alpha(x_{\alpha^c}) ,V_n \bigr)_{X_{\alpha}} \|   f^\alpha(x_{\alpha^c}) \|_{H^{s}_{\alpha}} ,
 \]
 as well as $E(  f^\alpha(x_{\alpha^c}) ,V_n)_{X_{\alpha}}  = 0$ otherwise. Thus
 \[\begin{aligned}
   \delta^\alpha_n (f)  &=  \inf_{\dim(V_n) = n} \left (\int_{\Xc_{\alpha^c}} E( f^\alpha(x_{\alpha^c}) ,V_n)^2_{X_\alpha} dx_{\alpha^c}\right)^{1/2}  \\
   & \leq  \inf_{\dim(V_n) = n}  \esssup_{x_{\alpha^c} } E \bigl( \|   f^\alpha(x_{\alpha^c}) \|^{-1}_{H^{s}_{\alpha}}   f^\alpha(x_{\alpha^c}) ,V_n \bigr)_{X_{\alpha}}  \| f \|_{L^2(\Xc_{\alpha^c} ; H^{s}_{\alpha} )}  \\
    & \leq d_n\bigl (B_1 (H^{s}_\alpha) \bigr)_{X_\alpha} \| f\|_{H^{s}},
\end{aligned}  \]
where we have used $\Vert f^\alpha \Vert_{{L^2(\Xc_{\alpha^c} ; H^{s}_{\alpha} )}}  \le \Vert  f\Vert_{H^{s}}$.
By \Cref{kolmogorov-width-sobolev-balls}, we have
$
   d_n\bigl (B_1 (H^{s}_\alpha) \bigr)_{X_\alpha}  \leq C_{\#\alpha} n^{-s / \#\alpha }
$
with $C_{\#\alpha}$ independent of $f$ and $n$. The statement now follows with \eqref{deltasymm}.
\end{proof}

For each $\nu \in D$, we introduce a sequence of spaces $U_{\nu,n_\nu}$ with dimension $n_\nu$ ({such as} splines or wavelets) such that for all $u\in H^{s}_{\nu},$
 \begin{align}
 E(u , U_{\nu,n_\nu})_{X_{\nu}} \le M  n_\nu^{-s}\Vert u \Vert_{H^{s}_{\nu}},\label{approx-spaces-Unu}
\end{align}
 which implies 
 \begin{align}
\int_{\Xc_{\nu^c}} E\bigl(f^\nu(x_{\nu^c}),U_{\nu,n_\nu} \bigr)_{X_{\nu}}^2\, dx_{\nu^c}  \le M^2 n_\nu^{-2s} \Vert f \Vert_{H^{s}}^2.\label{uniform-approx-spaces-Unu}
 \end{align}  
Then we let $U_n = U_{1,n_1} \otimes \hdots \otimes U_{d,n_d}$. 
Now, we can deduce an approximation result for the approximation of functions in Sobolev spaces using tree tensor networks.

\begin{theorem}\label{approx-Ws2}
Let $f \in H^{s}(\Xc)$ and $0<\varepsilon<1$, and let $N(f,\varepsilon,d)$ be the minimal complexity $N(T,r,U_n)$ such that 
\[
 e^T_{r,U_n}(f)_{X} \le \varepsilon \Vert f \Vert_{H^{s}}.
 \]
For any dimension partition tree $T$, there exists a constant $C$ depending on $d$ such that 
\[
N(f,\varepsilon,d) \le C \varepsilon^{-d/s}. 
\]
\end{theorem}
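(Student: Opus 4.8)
The plan is to combine the best-approximation bound \eqref{eq:treeerr2} of \Cref{prop:best-approx-rU} with the Sobolev width estimates from \Cref{prop:dp-sobolev} and \eqref{uniform-approx-spaces-Unu}, and then balance the ranks $r_\alpha$ and the mode dimensions $n_\nu$ against the target accuracy $\varepsilon$. First I would fix a dimension tree $T$ and choose, for each interior node $\alpha\in T\setminus\{D\}$, a rank $r_\alpha$ and, for each leaf $\nu$, a dimension $n_\nu$, and then invoke \eqref{eq:treeerr2}: the error $e^T_{r,U_n}(f)^2_X$ is bounded by $\sum_{\nu=1}^d M^2 n_\nu^{-2s}\|f\|_{H^s}^2$ plus $\sum_{\alpha\in A\setminus\{D\}} (\delta^\alpha_{r_\alpha}(f))^2$, and by \Cref{prop:dp-sobolev} each $\delta^\alpha_{r_\alpha}(f)\le C\, r_\alpha^{-s/d_\alpha}\|f\|_{H^s}$ with $d_\alpha=\min\{\#\alpha,d-\#\alpha\}\le d/2$.

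Next I would equidistribute the error: to make the total squared error at most $\varepsilon^2\|f\|_{H^s}^2$, it suffices to make each of the $O(\#T)$ terms at most $\varepsilon^2\|f\|_{H^s}^2/(\#T)$. For the leaf terms this forces $n_\nu \sim (\#T)^{1/(2s)}\varepsilon^{-1/s}$, and for the node terms $r_\alpha \sim \big((\#T)^{1/2}\varepsilon^{-1}\big)^{d_\alpha/s}$, hence $r_\alpha \le C\,\varepsilon^{-d_\alpha/s}\le C\,\varepsilon^{-d/(2s)}$ up to constants depending only on $d$ and $s$ (note $\#T\le 2d$ is bounded in terms of $d$). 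So we may take $R := \max_\alpha r_\alpha \le C\varepsilon^{-d/(2s)}$ and $n := \max_\nu n_\nu \le C\varepsilon^{-1/s}$.

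Then I would plug these into the complexity bound
\[
  N(T,r,U_n) \le R^a + (d-2)R^{a+1} + dRn
\]
from \Cref{sec:parametrization-tree-networks}, where $a$ is the arity of $T$. With $R \le C\varepsilon^{-d/(2s)}$ this gives $R^{a+1}\le C\varepsilon^{-(a+1)d/(2s)}$ and $Rn \le C\varepsilon^{-(d/2+1)/s}$; the dominant term is $R^{a+1}$, yielding $N(f,\varepsilon,d)\le C\varepsilon^{-(a+1)d/(2s)}$ with $C$ depending on $d$ and $s$. Since $a\le d$, in the worst case this is of order $\varepsilon^{-(d+1)d/(2s)}$, which is polynomial in $\varepsilon^{-1}$ but has an exponent worse than the claimed $d/s$; so a more careful choice of exponents is needed to reach exactly $N\le C\varepsilon^{-d/s}$.

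The main obstacle, and where I would focus the real work, is precisely this gap between the crude balancing above and the sharp exponent $d/s$. The resolution should be to \emph{not} equidistribute uniformly but to allocate a larger share of the error budget to nodes with large $d_\alpha$ (which are expensive in rank) and to pick the ranks so that the products $r_\alpha\prod_{\beta\in S(\alpha)}r_\beta$ appearing in $N(T,r,U_n)$ are all comparable to $\varepsilon^{-d/s}$; for a node $\alpha$ with sons $\beta$, one has $\#\alpha=\sum_\beta \#\beta$ and the complementary ranks interact, so choosing $r_\beta\sim\varepsilon^{-\#\beta/s}$ (capped using $d_\beta=\min\{\#\beta,d-\#\beta\}$) makes $r_\alpha\prod_\beta r_\beta\sim \varepsilon^{-(\#\alpha+\#\alpha)/s}$ at worst, and one must check that taking $d_\alpha$ rather than $\#\alpha$ in the exponent — i.e.\ exploiting $\delta^\alpha_n=\delta^{\alpha^c}_n$ from \eqref{deltasymm} — together with the fact that at most one son can have $\#\beta>d/2$, keeps every product term at $O(\varepsilon^{-d/s})$. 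Verifying this bookkeeping for an arbitrary tree, and confirming that the leaf contribution $dRn$ and the coarsening of the $\delta^\alpha$-sum (using $A=\Ic(T)$ when $\dim U_\nu=r_\nu$, which removes the leaf width terms) do not dominate, is the crux; the rest is routine.
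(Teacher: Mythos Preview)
Your overall approach matches the paper's: use \Cref{prop:best-approx-rU} together with \Cref{prop:dp-sobolev} and \eqref{uniform-approx-spaces-Unu}, equidistribute the error over the $\#T-1$ terms, and obtain ranks $r_\alpha \sim \varepsilon^{-d_\alpha/s}$ and $n_\nu = r_\nu \sim \varepsilon^{-1/s}$. These are exactly the paper's choices, and no different error allocation is required.

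The gap in your argument is a misdiagnosis. You do \emph{not} need to ``allocate a larger share of the error budget to nodes with large $d_\alpha$''; your equidistributed ranks are already the right ones. The loss occurs only in your complexity step, where you replace each $r_\alpha$ by the uniform bound $R=\max_\alpha r_\alpha \le C\varepsilon^{-d/(2s)}$ and then use the crude estimate $N \le R^a+(d-2)R^{a+1}+dRn$. Instead, keep the node-dependent exponents and bound each product directly:
\[
  r_\alpha \prod_{\beta\in S(\alpha)} r_\beta \;\lesssim\; \varepsilon^{-(d_\alpha+\sum_{\beta\in S(\alpha)} d_\beta)/s}.
\]
The crux you left open is then the purely combinatorial inequality
\[
  d_\alpha + \sum_{\beta\in S(\alpha)} d_\beta \;\le\; d \qquad\text{for every }\alpha\in\Ic(T),
\]
which the paper proves in two lines: if $\alpha=D$, then $\sum_{\beta\in S(D)} d_\beta\le\sum_\beta\#\beta=d$; if $\alpha\neq D$ and $d_\alpha=\#\alpha$ (so $\#\alpha\le d/2$), then $d_\alpha+\sum_\beta d_\beta\le \#\alpha+\sum_\beta\#\beta=2\#\alpha\le d$; and if $d_\alpha=\#\alpha^c$, then $d_\alpha+\sum_\beta d_\beta\le \#\alpha^c+\sum_\beta\#\beta=\#\alpha^c+\#\alpha=d$. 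With this, every interior-node term is $O(\varepsilon^{-d/s})$ and the leaf terms contribute $r_\nu n_\nu = r_\nu^2 \sim \varepsilon^{-2/s}$, giving $N(f,\varepsilon,d)\lesssim d\,\varepsilon^{-2/s}+(\#T-d)\,\varepsilon^{-d/s}\le C\varepsilon^{-d/s}$. So your plan is correct once you drop the uniform $R$ and carry out this short case check; no rebalancing of the error budget is needed.
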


\begin{proof}
From \Cref{prop:best-approx-rU} and \Cref{prop:dp-sobolev}, we deduce that
if $$r_\alpha \ge \varepsilon^{-d_\alpha/s} (C_{d_\alpha}\sqrt{\#T-1})^{d_\alpha/s}$$  for each interior node $\alpha \in \Ic(T) \setminus  \{D\}$,  and $r_\nu = n_{\nu} \ge \varepsilon^{-1/s}(M \sqrt{\#T-1})^{1/s}$
for all $1\le \nu \le d$, then 
$$e^T_{r,U_n}(f)_{X} \le \varepsilon \Vert f \Vert_{{H^s}}.$$
The minimal values of ranks such that the above conditions hold are such that $r_\alpha := r_\alpha(\varepsilon) \sim \varepsilon^{-d_\alpha/s}$, $\alpha \in T\setminus \{ D\}$, with constants depending on $d$, $M$ and $s$. Then recalling that $N(f,r,U_{n})=\sum_{\nu=1}^d r_\nu^2 + \sum_{\alpha \in  \Ic(T)  } r_\alpha \prod_{\beta\in S(\alpha)} r_\beta$, we have 
$$
N(f,\varepsilon,d) \lesssim d \varepsilon^{-2/s} + \varepsilon^{-( \sum_{\alpha \in S(D)} d_\alpha)/s} +   \sum_{\alpha \in \Ic(T) \setminus \{D\}} \varepsilon^{-(d_\alpha + \sum_{\beta \in S(\alpha)} d_\beta)/s}.
$$
We note that $\sum_{\alpha \in S(D)} d_\alpha \le \sum_{\alpha \in S(D)} \#\alpha = d$. 
Then consider $\alpha \in \Ic(T) \setminus \{D\}$. If $d_\alpha = \#\alpha$, we have $\#\alpha \le d/2$ and $d_\alpha + \sum_{\beta \in S(\alpha)} d_\beta \le \#\alpha +  \sum_{\beta \in S(\alpha)} \#\beta = 2\#\alpha \le d $. Otherwise,  $d_\alpha = \#\alpha^c $, and we have  $d_\alpha + \sum_{\beta \in S(\alpha)} d_\beta \le \#\alpha^c +  \sum_{\beta \in S(\alpha)} \#\beta =  \#\alpha^c + \#\alpha = d$. Then for any tree, we have 
$
N(f,\varepsilon,d) \lesssim d \varepsilon^{-2/s} + (\#T -d) \varepsilon^{-d/s}
$.
\end{proof}

An important observation is that for any dimension partition tree $T$  the complexity 
$N(f,\varepsilon,d)$ scales as 
$\varepsilon^{-d/s}$, the optimal rate deduced  from linear widths of Sobolev balls. For Sobolev spaces, a shallow network associated with a trivial tree with depth one (Tucker format) has a similar performance  as deep tensor networks associated with binary trees.

\subsection{Mixed Sobolev spaces}

We recall a standard result on Kolmogorov widths of balls of  mixed Sobolev spaces (see e.g. \cite{temlyakov1986approximations}).
\begin{theorem}\label{kolmogorov-width-mixed-sobolev-balls}
Let $I = (0,1)^m$. For any $s>0$, there exists $R>0$ such that for all $n \in \mathbb{N}$,
\[
d_n\bigl(B_1( H^{s}_{\mix}(I) ) \bigr)_{L^2} \le R  n^{-s} \log(n)^{s(m-1)}   \,.
\]
 \end{theorem}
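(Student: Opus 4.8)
The plan is to realize near-optimal subspaces as \emph{hyperbolic cross} (sparse grid) spaces built from a one-dimensional multiresolution, and to trade off dimension against approximation error; since only the upper bound is asserted, a one-sided Bernstein-type estimate in the tensor structure suffices. First I would fix on $(0,1)$ an orthogonal multilevel decomposition $L^2(0,1) = \overline{\bigoplus_{j\ge 0} W_j}$ with $L^2$-orthogonal projections $Q_j$ onto $W_j$ and $\dim W_j \lesssim 2^j$, coming from an interval-adapted wavelet (or spline-wavelet) multiresolution of sufficiently high smoothness and approximation order for the $s$ under consideration. The only property needed is the Bernstein bound $\sum_{j\ge 0} 2^{2sj}\|Q_j u\|_{L^2(0,1)}^2 \lesssim \|u\|_{H^s(0,1)}^2$, i.e.\ boundedness of the operator $B\colon H^s(0,1)\to L^2(0,1)$ acting as $2^{sj}\,\mathrm{Id}$ on $W_j$. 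Tensorizing over the $m$ factors (using that $H^s_{\mix}(I) = H^s(0,1)\otimes\cdots\otimes H^s(0,1)$ is a Hilbert tensor product, so $\|B^{\otimes m}\| = \|B\|^m$) yields, with $W_{\mathbf j} = W_{j_1}\otimes\cdots\otimes W_{j_m}$, $Q_{\mathbf j} = Q_{j_1}\otimes\cdots\otimes Q_{j_m}$ and $|\mathbf j|_1 = j_1+\cdots+j_m$,
\[
  \sum_{\mathbf j \in \Nbb_0^m} 2^{2s|\mathbf j|_1}\,\|Q_{\mathbf j} f\|_{L^2(I)}^2 \;=\; \|B^{\otimes m}f\|_{L^2(I)}^2 \;\lesssim\; \|f\|_{H^s_{\mix}(I)}^2 ,
\]
while the $W_{\mathbf j}$ are mutually orthogonal and span $L^2(I)$.

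Next I would take the hyperbolic cross space $V_J = \bigoplus_{|\mathbf j|_1 \le J} W_{\mathbf j}$. For its dimension, $\dim V_J \lesssim \sum_{\ell=0}^{J} 2^{\ell}\,\#\{\mathbf j \in \Nbb_0^m : |\mathbf j|_1 = \ell\} \lesssim \sum_{\ell=0}^{J} 2^{\ell}\ell^{m-1} \lesssim 2^{J}J^{m-1}$. For the error, the $L^2(I)$-orthogonal projection onto $V_J$ is $\sum_{|\mathbf j|_1 \le J} Q_{\mathbf j}$, and since $2^{-2s|\mathbf j|_1} \le 2^{-2sJ}$ when $|\mathbf j|_1 > J$, every $f\in B_1(H^s_{\mix}(I))$ satisfies
\[
  E(f,V_J)_{L^2(I)}^2 \;=\; \sum_{|\mathbf j|_1 > J} \|Q_{\mathbf j}f\|_{L^2(I)}^2 \;\le\; 2^{-2sJ}\sum_{\mathbf j\in\Nbb_0^m} 2^{2s|\mathbf j|_1}\|Q_{\mathbf j}f\|_{L^2(I)}^2 \;\lesssim\; 2^{-2sJ} .
\]
Given $n$, choose $J$ maximal with $\dim V_J \le n$; then $d_n(B_1(H^s_{\mix}(I)))_{L^2} \le E(\cdot,V_J)_{L^2} \lesssim 2^{-sJ}$. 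Since $\dim V_J \gtrsim 2^{J}$ one has $J \lesssim \log n$, and maximality gives $n < \dim V_{J+1} \lesssim 2^{J}(J+1)^{m-1} \lesssim 2^{J}(\log n)^{m-1}$, hence $2^{J} \gtrsim n(\log n)^{-(m-1)}$ and $2^{-sJ} \lesssim n^{-s}(\log n)^{s(m-1)}$, which is the claimed bound (small $n$ being absorbed into $R$ under the usual convention for $\log n$).

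An essentially equivalent route avoids wavelets: in the Hilbert setting $d_n(B_1(A))_{L^2}$ equals the $(n+1)$-st singular value of the compact embedding $A\hookrightarrow L^2$; for $A=H^s_{\mix}(I)$ this embedding is the $m$-fold tensor product of $H^s(0,1)\hookrightarrow L^2(0,1)$, whose singular values decay like $k^{-s}$, so the singular values of $A\hookrightarrow L^2$ are the decreasing rearrangement of the products $(k_1\cdots k_m)^{-s}$; the classical $m$-dimensional divisor count $\#\{\mathbf k\in\Nbb^m : k_1\cdots k_m \le N\} \sim N(\log N)^{m-1}$ together with inversion gives the same rate. The main obstacle is not the (routine) combinatorics but the functional-analytic input of the first step: producing an interval-adapted multiresolution for which the Bernstein inequality for $H^s(0,1)$ — and hence, after tensorization, for $H^s_{\mix}(I)$ — holds for non-integer $s$. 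This is standard wavelet/spline theory and is precisely what one invokes from the references cited with the theorem.
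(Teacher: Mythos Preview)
The paper does not prove this theorem: it is stated as a known result with a reference to \cite{temlyakov1986approximations} (and later \cite{dung2018hyperbolic}), so there is no proof in the paper to compare against. Your proposal is a correct sketch of the standard hyperbolic-cross argument that underlies those references: a multilevel Bernstein/norm-equivalence on $(0,1)$, tensorized to $H^s_{\mix}$, gives the error bound $2^{-sJ}$ for the sparse-grid space $V_J$, and the dimension count $\dim V_J \asymp 2^J J^{m-1}$ inverts to the stated rate. The alternative you mention via singular values of the tensor-product embedding and the divisor-counting asymptotics is equally classical and also yields the bound. Either route is exactly what the cited literature does, so your write-up simply supplies the proof the paper chose to omit.
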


The above result yields the following estimate of the average linear widths of $f$. 
\begin{proposition}\label{dp-mixed-sobolev}
For $f\in  H^{s}_{\mix}(\Xc)$ and $\alpha \in T\setminus \{D\}$, we  have 
$$
\delta^\alpha_n (f) \le C n^{-s} \log(n)^{s(d_\alpha -1)} \Vert f \Vert_{H^{s}_{\mix}}
$$
with $d_\alpha = \min\{\#\alpha,d-\#\alpha\}$, and 
 $C$ a constant independent of $f$ and $n$, but depending on $d_\alpha$ and $s$.
\end{proposition}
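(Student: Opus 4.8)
The plan is to follow the proof of \Cref{prop:dp-sobolev} almost verbatim, with the standard Sobolev space $H^s_\alpha$ replaced by the mixed Sobolev space $H^s_{\alpha,\mix}$ throughout, and with \Cref{kolmogorov-width-mixed-sobolev-balls} invoked in place of \Cref{kolmogorov-width-sobolev-balls}. The only point that genuinely uses the mixed structure, rather than being a word-for-word repetition, is the fiber-norm bound, so I would treat that first.

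Concretely, the first step is to observe that for $f \in H^s_{\mix}(\Xc)$ the fiber $f^\alpha(x_{\alpha^c}) = f(\cdot, x_{\alpha^c})$ belongs to $H^s_{\alpha,\mix}$ for almost every $x_{\alpha^c} \in \Xc_{\alpha^c}$, and that
\[
   \Vert f^\alpha \Vert_{L^2(\Xc_{\alpha^c} ;\, H^s_{\alpha,\mix})} \le \Vert f \Vert_{H^s_{\mix}} .
\]
This is the mixed-Sobolev analogue of the inequality $\Vert f^\alpha\Vert_{L^2(\Xc_{\alpha^c} ;\, H^s_\alpha)} \le \Vert f\Vert_{H^s}$ used in \Cref{prop:dp-sobolev}. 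It follows from the tensor-product characterization $H^s_{\mix}(\Xc) = H^s_{\mix}(\Xc_\alpha) \otimes H^s_{\mix}(\Xc_{\alpha^c})$ with the canonical cross norm: expressing the mixed norm through the mixed derivatives $\partial^{\mathbf k} f$ with $0 \le k_i \le s$ for all $i$ (and through the corresponding Fourier/interpolation description when $s$ is not an integer), the left-hand side only involves those multi-indices $\mathbf k$ that are supported in $\alpha$, hence it is dominated by the full mixed norm.

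The second step is the reduction to Kolmogorov widths, which is identical to \Cref{prop:dp-sobolev}: for any $n$-dimensional subspace $V \subset X_\alpha$ and any $x_{\alpha^c}$ with $f^\alpha(x_{\alpha^c}) \neq 0$, scaling gives
\[
  E\bigl(f^\alpha(x_{\alpha^c}), V\bigr)_{X_\alpha} = E\bigl( \Vert f^\alpha(x_{\alpha^c})\Vert_{H^s_{\alpha,\mix}}^{-1}\, f^\alpha(x_{\alpha^c}),\, V\bigr)_{X_\alpha} \Vert f^\alpha(x_{\alpha^c})\Vert_{H^s_{\alpha,\mix}} \le d_n\bigl(B_1(H^s_{\alpha,\mix})\bigr)_{X_\alpha}\, \Vert f^\alpha(x_{\alpha^c})\Vert_{H^s_{\alpha,\mix}} ,
\]
while $E(f^\alpha(x_{\alpha^c}), V)_{X_\alpha} = 0$ when the fiber vanishes; squaring, integrating over $\Xc_{\alpha^c}$, taking the square root and the infimum over $V$ of dimension $n$, and using the first step, one arrives at
\[
   \delta^\alpha_n(f) \le d_n\bigl(B_1(H^s_{\alpha,\mix})\bigr)_{X_\alpha}\, \Vert f\Vert_{H^s_{\mix}} .
\]

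Finally, applying \Cref{kolmogorov-width-mixed-sobolev-balls} with $m = \#\alpha$ gives $\delta^\alpha_n(f) \le C_{\#\alpha}\, n^{-s}\log(n)^{s(\#\alpha - 1)}\Vert f\Vert_{H^s_{\mix}}$; if instead $\#\alpha^c < \#\alpha$ one applies the same estimate to $\alpha^c$ and uses $\delta^\alpha_n = \delta^{\alpha^c}_n$ from \eqref{deltasymm}, obtaining the exponent $s(\#\alpha^c - 1)$. In either case the resulting exponent of $\log(n)$ is $s(d_\alpha - 1)$ with $d_\alpha = \min\{\#\alpha, d - \#\alpha\}$, which is the assertion. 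The only real obstacle is the norm inequality of the first step: once the canonical cross norm / tensor product structure of $H^s_{\mix}$ has been used to control the fibered norm by the full mixed norm, the remainder of the argument is the verbatim reasoning of \Cref{prop:dp-sobolev}.
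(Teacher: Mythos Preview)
Your proposal is correct and follows essentially the same approach as the paper: the paper's proof likewise invokes the fiber-norm inequality $\Vert f^\alpha \Vert_{L^2(\Xc_{\alpha^c};\,H^s_{\alpha,\mix})} \le \Vert f\Vert_{H^s_{\mix}}$, repeats the argument of \Cref{prop:dp-sobolev} to bound $\delta^\alpha_n(f)$ by $d_n(B_1(H^s_{\alpha,\mix}))_{X_\alpha}\Vert f\Vert_{H^s_{\mix}}$, applies \Cref{kolmogorov-width-mixed-sobolev-balls}, and finishes via \eqref{deltasymm}. Your write-up simply makes the fiber-norm step more explicit than the paper does.
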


\begin{proof}
Let $f \in H^{s}_{\mix}(\Xc)$. Using $\Vert f^\alpha \Vert_{L^2 (\Xc_{\alpha^c} ; H^{s}_{\alpha,\mix} )}  \le \Vert  f\Vert_{H^{s}_{\mix}}$ to argue as in the proof of \Cref{prop:dp-sobolev}, we obtain
\[
    \delta^\alpha_n (f) \leq d_n\bigl (B_1 (H^{s}_{\alpha, \mix}) \bigr)_{X_\alpha} \| f\|_{H^{s}_\mix}.
\]
By \Cref{kolmogorov-width-mixed-sobolev-balls}, 
\[
  d_n\bigl (B_1 (H^{s}_{\alpha, \mix}) \bigr)_{X_\alpha} \leq C_{\#\alpha} n^{-s} \log(n)^{s(\#\alpha-1)} 
\]
with $C_{\#\alpha}$ independent of $f$ and $n$.
The statement follows with \eqref{deltasymm}.
\end{proof}

Another bound is obtained in the next proposition by exploiting results on  
 hyperbolic cross approximation \cite{hansen2012best} (see also \cite{dung2018hyperbolic}). 
 Related conversions from hyperbolic cross approximations to tensor formats have also been considered in \cite[\S 7.6]{hackbusch2019tensor} and \cite{Schneider201456}.
 
\begin{proposition}\label{dp-mixed-sobolev-sparse}
For $f\in H^{s}_{\mix}(\Xc)$ and $\alpha \in T\setminus \{D\}$, we  have 
\[
 \delta^\alpha_n (f) \le C_d\, n^{-2s} \log(n)^{2s(d-2)} \Vert f \Vert_{H^{s}_{\mix}}
\]
with $C_d$ independent of $f$ and $n$, but depending on $s$ and {depending} exponentially on $d$.
\end{proposition}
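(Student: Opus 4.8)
The plan is to obtain the bound by converting a hyperbolic cross approximation of $f$ into a tensor of controlled $\alpha$-rank. First I would invoke the known hyperbolic cross result from \cite{hansen2012best}: for $f \in H^s_\mix(\Xc)$, there is a subspace $\Lambda_n$ spanned by $n$ tensor-product basis functions (e.g.\ tensorized wavelets) indexed by a hyperbolic cross $\{k : \prod_{\nu}\max(1,k_\nu) \le \text{threshold}\}$ such that $\|f - P_{\Lambda_n} f\|_{L^2} \le C_d\, n^{-s}\log(n)^{s(d-1)}\|f\|_{H^s_\mix}$. The key structural point is that the hyperbolic cross index set, when split into the variables in $\alpha$ and those in $\alpha^c$, has bounded ``rank'': the number of distinct ``slices'' in the $\alpha$-direction is small. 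Concretely, $\rank_\alpha(P_{\Lambda_n} f)$ is at most the number of multi-indices $k_\alpha$ that appear in the cross, and for a hyperbolic cross of cardinality $n$ this is of order $n^{1/2}\log(n)^{c}$ up to constants depending on $d$ — because once we fix $k_\alpha$ with $\prod_{\nu\in\alpha}\max(1,k_\nu) = m$, the remaining budget for $k_{\alpha^c}$ is a hyperbolic cross at level $\sim n/m$, and summing the geometric-type series over $m$ shows both that $n \sim \sum_m (\text{level}/m)$ grows and that the count of admissible $k_\alpha$ is $O(\sqrt{n}\,\mathrm{polylog})$.

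Next I would combine this with \Cref{prop:dp-mixed-sobolev}-style reasoning: given a target rank $n$, choose the hyperbolic cross cardinality $N = N(n)$ as large as possible subject to $\rank_\alpha(P_{\Lambda_N} f) \le n$. By the counting above, $N \gtrsim c_d\, n^2 / \log(n)^{c}$ for an appropriate power, so that $N^{-s}\log(N)^{s(d-1)} \lesssim C_d\, n^{-2s}\log(n)^{2s(d-2)}$ after absorbing logarithmic factors (here the exponent $d-2$ in the statement, rather than $d-1$, reflects that one logarithmic power is ``spent'' in the rank/cardinality conversion). Since $\delta^\alpha_n(f) = \min_{\rank_\alpha(v)\le n}\|f-v\|_X \le \|f - P_{\Lambda_N} f\|_X$, the claimed estimate follows. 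I would also note that by \eqref{deltasymm} it suffices to treat the case $\#\alpha \le d/2$, though here the final bound is stated uniformly in $d$ anyway.

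The main obstacle — and the step requiring the most care — is the precise combinatorial estimate relating the $\alpha$-rank of a hyperbolic-cross-supported tensor to the cardinality of the cross, i.e.\ showing $\rank_\alpha(P_{\Lambda_N} f) \lesssim C_d\, N^{1/2}\log(N)^{\kappa}$ with explicit dependence of $\kappa$ on $d$, and then inverting this relation to get the clean powers in the statement. This is essentially the content of the conversions in \cite[\S 7.6]{hackbusch2019tensor} and \cite{Schneider201456}, so I would cite those for the core counting lemma and focus the written proof on assembling the pieces: (i) the hyperbolic cross error bound, (ii) the rank-versus-cardinality count, (iii) the choice of $N$ in terms of $n$, and (iv) the logarithmic bookkeeping. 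One should keep track of whether the exponent of $\log n$ is exactly $2s(d-2)$ or only bounded by it; I expect that with the natural choices it is bounded by $2s(d-2)$ up to the constant $C_d$, which is all that is claimed.
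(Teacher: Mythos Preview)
Your proposal is correct and follows essentially the same strategy as the paper: construct a hyperbolic cross approximation $f_L$, bound $\rank_\alpha(f_L)$ by a combinatorial count on the index set, and invert the rank-versus-error relation. The paper parameterizes by the level $L$ (so error $\sim 2^{-Ls}$ directly, with no log factor) and obtains the rank bound $\rank_\alpha(f_L)\lesssim 2^{L/2}(L/2)^{d-2}$ via an explicit splitting of the index set into $\{L_\alpha(j)\le L_{\alpha^c}(j)\}$ and its complement, then inverts using the Lambert $W$ function; this makes the exponent $2s(d-2)$ fall out cleanly, whereas your route through the cardinality $N$ and the width bound $N^{-s}\log(N)^{s(d-1)}$ forces you to track an extra cancellation of log powers.
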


\begin{proof}
We rely on results on  $m$-term  approximation from \cite{hansen2012best}. We consider the tensor product wavelet system $\{\phi_j\}_{j \in \Ic}$ from \cite[Section 3.2]{hansen2012best}, where $\Ic \subset \Nbb^d \times \Zbb^d$ and where for $  (l,k) \in \Ic$, $\phi_{l,k} (x)=\varphi_{l_1,k_1} (x_1)\hdots  \varphi_{l_d,k_d} (x_d)  $ with $\varphi_{l_\nu,k_\nu}$ a one-dimensional wavelet system.  
Consider $f \in {H^{s}_{\mix} }$ with $\Vert f \Vert_{{H^{s}_{\mix}}} =1$, where $H^{s}_{\mix}$ coincides with the Lizorkin-Triebel space $S^s_{2,2} F$ (see definition in \cite[Section 3.1]{hansen2012best}). It admits an expansion 
 $$
 f = \sum_{j \in \Ic} c_j(f) \phi_j, 
 $$
 with a sequence of coefficients $(c_j(f))_{j\in \Ic}$ in the sequence space $s_{2,2}^s f(\Ic)$ defined in \cite[Definition 3.2]{hansen2012best}.
Then consider the multi-index set $$
  \Ic_L = \{(l,k) \in \Ic : \vert l \vert_1 \le L\}, 
 $$
 which is an hyperbolic cross with cardinality $\# \Ic_L \sim L^{d-1} 2^L$ (see \cite[Remark 5.7]{hansen2012best}).
Then from \cite[Proposition 5.6]{hansen2012best} and the fact that $\Vert f \Vert_{H^s_{\mix}} \sim \Vert (c_j(f))_{j\in \Ic} \Vert_{s_{2,2}^s f}$, 
we have that the approximation
 $$
 f_L = \sum_{j \in \Ic_L} c_j \phi_j
  $$
satisfies
  $
 \Vert f - f_L \Vert_{p} \lesssim 2^{-L s} 
 $.

 We let $L_\alpha((l,k)) = \sum_{\nu\in \alpha} {l_\nu}$, $L_{\alpha^c}((l,k)) =\vert l\vert_1 - L_\alpha((l,k)) $, and define the sets of multi-indices 
 $$
  \mathcal{I}^{\le}_{L} = \{j \in \mathcal{I}_L : L_\alpha(j) \le L_{\alpha^c}(j)\} \quad \text{and} \quad   \mathcal{I}^{>}_{L} = \{j \in \mathcal{I}_L : L_\alpha(j) > L_{\alpha^c}(j)\}.
 $$
 We decompose 
 $$
f_L = f_{L}^{\le} + f_{L}^{>}, \quad f_{L}^{\le} = \sum_{j \in \mathcal{I}_{L}^{\le}} c_{j} \phi_{j}, \quad f_{L}^{>} = \sum_{j \in \mathcal{I}_{L}^{>}} c_{j} \phi_{j}.$$
Defining
$
{\mathcal{I}}_{L,\beta}^{S} = \{j_\beta : (j_\beta,j_{\beta^c}) \in \mathcal{I}_{L}^{S} \}
$,
with $S \in \{ \le, > \}$ and $\beta \in \{ \alpha, \alpha^c\}$,
we have 
$$
f_{L}^{S} = \sum_{j_\beta \in  {\mathcal{I}}_{L,\beta}^{S} } \phi_{j_\beta}(x_\beta) \psi_{j_\beta}^{S,\beta}(x_{\beta^c}), \quad \text{with } \psi^{S,\beta}_{j_\beta}(x_{\beta^c}) = \sum_{j_{\beta^c}: (j_\beta,j_{\beta^c}) \in \mathcal{I}_L^S} c_{(j_\beta,j_{\beta^c})} \phi_{j_{\beta^c}}(x_{\beta^c}),
$$
so that $\rank_\beta(f_L^S) \le \#\mathcal{I}_{L,\beta}^S$.
It follows that
$$
\rank_\alpha(f_L) \le \rank_\alpha(f_L^\le) + \rank_\alpha(f_L^>) = \rank_\alpha(f_L^\le) + \rank_{\alpha^c}(f_L^>) \le 
 \#\mathcal{I}_{L,\alpha}^\le +  \#\mathcal{I}_{L,\alpha^c}^>.
$$
 We observe that for all $j \in \mathcal{I}^{\le}_{L}$, $L \ge L_\alpha(j) + L_{\alpha^c}(j) \ge 2 L_\alpha(j)$, and therefore
 $$\#\Ic^\le_{L,\alpha} \le \# \{(l_\alpha,k_\alpha) :  (l_\alpha,l_{\alpha^c},k_\alpha,k_{\alpha^c})  \in\mathcal{I}  , \vert l_\alpha \vert_1 \le  L/2 \} \lesssim 2^{L/2} (L/2)^{\#\alpha-1}.
 $$ 
  Also, for all $j \in \mathcal{I}^{<}_{L}$, $L >  2 L_{\alpha^c}(j)$, and therefore
 $$\#\Ic^>_{L,\alpha^c} \lesssim  2^{L/2} (L/2)^{\#\alpha-1}. $$
 Since $\max\{\#\alpha,\#\alpha^c\} \le d-1$, we finally deduce that 
 $
 \rank_\alpha(f_L) \le C 2^{L/2} (L/2)^{d-2}
 $
 for some constant $C$. 
 This implies that for $n \ge C 2^{L/2} (L/2)^{d-2} := C x \log_2(x)^{d-2} $, 
 $$
\delta^\alpha_n (f) \lesssim 2^{-Ls} = x^{-2s}.
 $$
A solution to $ x\log_2(x)^{a} = t$, for $t\ge 0$, is given by
 $
x= e^{a W_0(t^{1/a} \log(2)/a)}, 
 $
 where $W_0$ is the principal branch of the Lambert function. 
  Then for $a=d-2$ and $n\ge C t$, we have  
  $$
    \delta^\alpha_n (f) \lesssim e^{-2sa W_0(t^{1/a}\log(2)/a)}.
 $$
 Using \cite[Theorem 2.1]{Hoorfar2008}, we have that for all $t\ge e$, 
 $
 W_0(t) \ge \log(t) - \log(\log(t)),$ which implies 
 $e^{W_0(t)} \ge t \log(t)^{-1}$. Therefore, for $n \ge C (ae/\log(2))^{d-2}$,  
 $$
    \delta^\alpha_n (f) \lesssim ((n/C)^{1/a} \log(2) a^{-1} \log((n/C)^{1/a} \log(2)a^{-1})^{-1}  )^{-2sa} \le  C_{a,s} n^{-2s} \log(n)^{2sa},
 $$
 with a constant $C_{a,s}$ depending on $a$ and $s$, which completes the proof.
\end{proof}

The above result provides a better rate in $n^{-2s}$ (instead of $n^{-s}$) but slightly worse exponent of the $\log(n)$ term. In the following, we will only exploit  the  result of \Cref{dp-mixed-sobolev-sparse}.

\begin{remark}
  Based on \cite{hansen2012best}, analogous results can be obtained for $L^p$-weighted widths with $H^{s}_{\mix}$ replaced by the Lizorkin-Triebel space $S^s_{p,2} F$ for $1 \leq p < \infty$.
\end{remark}

For each $\nu \in D$, we introduce a sequence of spaces $U_{\nu,n_\nu}$ with dimension $n_\nu$ (e.g. trigonometric polynomials or wavelets) such that for all $u\in H^{s}(\Xc_{\nu}),$ the error  $ E(u,U_{\nu,n_\nu})_{L^2_{\nu}}$ satisfies \eqref{approx-spaces-Unu}, which implies
 \[
\int_{\Xc_{\nu^c}} E\bigl(f^\nu(x_{\nu^c}),U_{\nu,n_\nu}\bigr)_{X_{\nu}}^2 \,d x_{\nu^c}  \le M^2 n_\nu^{-2s} \Vert f \Vert_{H^{s}_{\mix}}^2.
 \] 
Then we let $U_n = U_{1,n_1} \otimes \hdots \otimes U_{d,n_d}$. 
Now, we can state an approximation result for the approximation of functions in mixed Sobolev spaces using tree tensor networks. 

\begin{theorem}\label{approx-Ws2mix}
Let $f \in H^{s}_{\mix}(\Xc)$. Let $0<\varepsilon<1$.
We denote by $N(f,\varepsilon,d)$ the complexity $N(T,r,U_n)$ sufficient to achieve a relative error $\varepsilon$ for the approximation of $f$ in the format $\Tc^T_r(U_n).$ 
There exists a constant {$C_d$}, {which may depend exponentially on $d$}, such that 
\begin{enumerate}[{\rm(i)}]
\item if $T$ is a trivial tree with depth one,  
\[
N(f,\varepsilon,d) \le C_d   \varepsilon^{-d/(2s)} \log(\varepsilon^{-1})^{d(d-2)},
\]
\item 
and if $T$ is a binary tree,  
\[
N(f,\varepsilon,d) \le  C_d \varepsilon^{-3/(2s)} \log(\varepsilon^{-1})^{3(d-2)}.
\]
\end{enumerate}
\end{theorem}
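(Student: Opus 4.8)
The plan is to argue exactly as in the proof of \Cref{approx-Ws2}, but with the Sobolev width estimate replaced by that of \Cref{dp-mixed-sobolev-sparse}. Set $R_\varepsilon := \varepsilon^{-1/(2s)}\log(\varepsilon^{-1})^{d-2}$. The whole proof reduces to showing that one can meet the accuracy $\varepsilon\Vert f\Vert_{H^s_\mix}$ with \emph{all} ranks $r_\alpha$, $\alpha\in T\setminus\{D\}$, and \emph{all} leaf-space dimensions $n_\nu$ of order $R_\varepsilon$ (up to constants depending on $d$ and $s$); then the pre-computed bound $N(T,r,U_n)\le R^a+(\#T-1-d)R^{a+1}+dRn$ from \Cref{sec:parametrization-tree-networks}, used with $R=n=R_\varepsilon$, immediately gives (i) (trivial tree: $a=d$, $\#T-1-d=0$, hence $N\lesssim R_\varepsilon^{d}$) and (ii) (binary tree: $a=2$, $\#T-1-d=d-2$, hence $N\lesssim dR_\varepsilon^{3}$). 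These are the claimed estimates once one substitutes $R_\varepsilon^{d}=\varepsilon^{-d/(2s)}\log(\varepsilon^{-1})^{d(d-2)}$ and $R_\varepsilon^{3}=\varepsilon^{-3/(2s)}\log(\varepsilon^{-1})^{3(d-2)}$.

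For the error estimate I would start from \eqref{eq:treeerr2} of \Cref{prop:best-approx-rU}. The interior-node part is bounded by $\sum_{\alpha\in\Ic(T)\setminus\{D\}}(\delta^\alpha_{r_\alpha}(f))^2$, and \Cref{dp-mixed-sobolev-sparse} gives $\delta^\alpha_{r_\alpha}(f)\le C_d\, r_\alpha^{-2s}\log(r_\alpha)^{2s(d-2)}\Vert f\Vert_{H^s_\mix}$; requiring each summand to be at most $(\#T)^{-1}\varepsilon^2\Vert f\Vert_{H^s_\mix}^2$ leads to the scalar inequality $r_\alpha\log(r_\alpha)^{-(d-2)}\gtrsim\varepsilon^{-1/(2s)}$, whose minimal solution is $r_\alpha\sim R_\varepsilon$ — this is literally the Lambert-$W$ computation already carried out in the proof of \Cref{dp-mixed-sobolev-sparse} (via \cite{Hoorfar2008}) and can simply be quoted. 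For the leaves I would choose $U_\nu=U_{\nu,n_\nu}$ to be a principal subspace of $\mathcal{S}^{\{\nu\}}_f$ of dimension $n_\nu$, so that the leaf term in \eqref{eq:treeerr2} equals $\int_{\Xc_{\nu^c}}E(f^\nu(x_{\nu^c}),U_\nu)^2_{X_\nu}\,dx_{\nu^c}=\Vert f-\Pc^{\{\nu\}}_{U_\nu}f\Vert_X^2=(\delta^{\{\nu\}}_{n_\nu}(f))^2$, which is controlled by the same width; taking $r_\nu=n_\nu$ makes $A=\Ic(T)$ in \eqref{eq:treeerr2}, and the same inversion gives $n_\nu\sim R_\varepsilon$. (A variant that keeps off-the-shelf univariate spline or wavelet leaf spaces of dimension $\sim\varepsilon^{-1/s}$ and instead reduces the leaf ranks, using \eqref{eq:treeerr2} with $A=T$, also works and reproduces (i) and (ii) for $d\ge3$; for $d=2$ the factor-matrix terms $r_\nu n_\nu$ dominate and one would not get the stated exponent that way.)

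Combining the two bounds, the total error is $\le\varepsilon\Vert f\Vert_{H^s_\mix}$ with $r_\alpha,n_\nu\sim R_\varepsilon$, and the parameter count described above finishes the proof. The constant $C_d$ depends exponentially on $d$: this is inherited from \Cref{dp-mixed-sobolev-sparse}, from the $\#T$-fold splitting of the error budget, and from the powers of $d$ appearing in $N(T,r,U_n)$.

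I do not expect an essential obstacle: the statement is \Cref{prop:best-approx-rU} plus \Cref{dp-mixed-sobolev-sparse} plus bookkeeping. The one genuinely delicate point is the treatment of the leaves — generic univariate approximation delivers only the rate $s$ in $n_\nu$, which for the Tucker (trivial-tree) format would degrade (i) all the way to $\varepsilon^{-d/s}$, so one must either adapt the leaf spaces to $f$ (the singular functions, as above) or carry out an honest low-rank reduction in the leaf modes while checking that the factor-matrix contributions $r_\nu n_\nu$ stay of lower order than the core contributions $\prod_{\beta}r_\beta$. The only remaining care is the logarithmic bookkeeping in inverting \Cref{dp-mixed-sobolev-sparse}, which merely repeats the Lambert-$W$ estimate given there.
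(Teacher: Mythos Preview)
Your proposal is correct and follows the same overall scheme as the paper: \Cref{prop:best-approx-rU} combined with \Cref{dp-mixed-sobolev-sparse}, followed by an inversion to determine the ranks and a parameter count via $N(T,r,U_n)\le R^a+(\#T-1-d)R^{a+1}+dRn$.

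The one genuine difference is in how the leaves are handled. The paper keeps the \emph{universal} univariate spaces $U_{\nu,n_\nu}$ (splines or wavelets) introduced just before the theorem, takes $n_\nu\sim\varepsilon^{-1/s}$ from \eqref{approx-spaces-Unu}, and then applies the width estimate of \Cref{dp-mixed-sobolev-sparse} also at the leaf nodes (so $A=T$ in \eqref{eq:treeerr2}), obtaining leaf ranks $r_\nu\sim R_\varepsilon$; this is precisely what you list as your parenthetical ``variant''. Your main route instead adapts the leaf spaces to $f$ via principal subspaces, so that $n_\nu=r_\nu\sim R_\varepsilon$ and $A=\Ic(T)$. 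Your route is a bit cleaner and actually yields a smaller leaf contribution ($R_\varepsilon^2=\varepsilon^{-1/s}\log(\varepsilon^{-1})^{2(d-2)}$ versus the paper's $r_\nu n_\nu\sim\varepsilon^{-3/(2s)}\log(\varepsilon^{-1})^{d-2}$), at the price of $f$-dependent leaf spaces---whereas the theorem as stated, read in the context of the preceding paragraph, is meant for the fixed universal $U_n$. Your observation that the universal-space variant has a borderline issue for $d=2$ in case~(i) is correct and is simply absorbed (or silently ignored) in the paper's formulation.

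A minor bibliographic point: for the inversion step $r_\alpha^{-2s}\log(r_\alpha)^{2s(d-2)}\lesssim\varepsilon\Rightarrow r_\alpha\gtrsim R_\varepsilon$, the paper does not reuse the Lambert-$W$ computation from the proof of \Cref{dp-mixed-sobolev-sparse} but instead quotes \cite[Lemma~1]{Schneider201456}; the content is of course the same.
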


\begin{proof}
From \cite[Lemma 1]{Schneider201456}, we know that for some function $c(\varepsilon,d)$ such that  $c(\varepsilon,d) \to 1$ as $\varepsilon\to 0$ and $c(\varepsilon,d) \to \infty$ super-exponentially with $d_\alpha$,
 the condition
 \begin{equation}\label{cond-ralpha-mix}
 r_\alpha \ge c(\varepsilon,d)  (C\sqrt{d+\#T-1})^{1/(2s)} s^{-d+2}  \varepsilon^{-1/(2s)} \log(\varepsilon^{-1})^{d-2} %
\end{equation}
implies 
$$
C r_\alpha^{-2s} \log(r_\alpha)^{2s(d-2)} \le \varepsilon / \sqrt{d+\#T-1}.
$$
Then using  \Cref{prop:best-approx-rU} and \Cref{dp-mixed-sobolev-sparse}, we have that 
if $n_{\nu} \ge \varepsilon^{-1/s}(M \sqrt{d+\#T-1})^{1/s}$ for all $1\le \nu \le d$, and $r_\alpha$ satisfies \eqref{cond-ralpha-mix} 
 for each node $\alpha \in T$,  then 
$$e^T_{r,U_n}(f)_{X} \le \varepsilon \Vert f \Vert_{{H^{s}_{\mix}}}.$$
The minimal values of ranks and dimensions $n_\nu$ such that the above conditions hold are such that 
$r_\alpha := r_\alpha(\varepsilon) \sim \varepsilon^{-1/(2s)} \log(\varepsilon^{-1})^{d-2}$, $\alpha \in T\setminus \{ D\}$, and $n_\nu := n_\nu(\varepsilon) \sim \varepsilon^{-1/s}$, $1\le \nu\le d$, with constants depending on $d$, $M$ and $s$. Then recalling that $N(f,r,U_{n})=\sum_{\nu=1}^d r_\nu n_\nu + \sum_{\alpha \in  \Ic(T)  } r_\alpha \prod_{\beta\in S(\alpha)} r_\beta$, we have 
\begin{align*}
N(f,\varepsilon,d) \lesssim \; &d \varepsilon^{-3/(2s)}  \log(\varepsilon^{-1})^{d-2} + \varepsilon^{-a/(2s)} \log(\varepsilon^{-1})^{a(d-2)}   \\
&+ 
(\#T-d-1) \varepsilon^{-(a+1)/(2s)} \log(\varepsilon^{-1})^{(a+1)(d-2)},\end{align*}
where $a = \max_{\alpha \in \Ic(T)} \#\alpha$ is the arity of the tree $T$.
\end{proof}

\begin{remark}
 From the above result, we can make the following observations.
 \begin{enumerate}[{\rm(i)}]
 \item {For a trivial tree (Tucker format), we have a complexity $\varepsilon^{-d/(2s)}$, up to a logarithmic factor, which compared to the result of \Cref{approx-Ws2} for $H^{s}$-regularity represents a deterioration by a factor two in the rate.} In other words, the   
extra regularity of functions in 
 $H^{s}_{\mix}$ compared to those of $H^{s}$ is not exploited by {shallow tensor networks. }
\item For binary trees, we observe a significant gain, going from a complexity in $\varepsilon^{-d/s}$ for {$H^{s}$} to a complexity in $\varepsilon^{-3/(2s)}\log(\varepsilon^{-1})^{3d-6}$ for 
{$H^{s}_{\mix}$}. The result is similar to the one obtained in \cite{Schneider201456} using results on bilinear approximation from \cite{Temlyakov:1989}. 
\item We note, however, that deep tensor networks (associated with binary trees) do not achieve the optimal rate in $\varepsilon^{-1/s}$ (up to logarithmic factors) obtained from linear widths of mixed Sobolev balls, and reached by hyperbolic cross approximation \cite{temlyakov1986approximations,dung2018hyperbolic}.  %
\end{enumerate}
\end{remark}

\begin{remark}The optimal rate in $\varepsilon^{-1/s}$ (up  to logarithmic terms) could be obtained by tree tensor networks by further exploiting sparsity in the tensors, and by using a measure of complexity $N$ counting the number of nonzero entries. In particular, this optimal rate can be achieved with a trivial tree and a tensor $C^D$  having a sparsity pattern based on hyperbolic crosses. We refer the reader to \cite{Ali2020partI,Ali2020partII,ali2021approximation} for the analysis of approximation classes of tensor networks with sparsity.
\end{remark}
\section{Approximation of compositional functions}\label{sec:composition}

We have seen in \Cref{sec:tree-tensor-networks} that tree tensor networks are a particular class of compositional functions, where the functions that are composed are vector-valued multilinear functions. 
  In this section, we consider the approximation with tree tensor networks of a particular class of compositional functions (also considered in \cite{mhaskar2016deep})  where the functions that are composed are real-valued functions with Sobolev regularity. 
  In this section, we consider a set $\Xc = \Xc_1\times \hdots \times \Xc_d$, with $\Xc_\nu = I^\nu$ a bounded and closed interval, equipped with the uniform measure. The results can be easily extended to the case of more general measures.
     
\subsection{A class $\Fc^T_s$ of compositional functions}
We let $T$ be a given dimension partition tree over $D=\{1,\hdots,d\}.$  
We consider the model class $\Fc^T$ of compositional functions $f : \Xc \to \Rbb$ of the form 
$$
f(x) = f_D((g_\alpha(x_\alpha))_{\alpha \in S(D)}) 
$$
where $g_\alpha : \Xc_\alpha \to I^\alpha \subset \Rbb$ and 
 $f_D$ is a multivariate function with values in $\Rbb=:I^D$, where $g_\alpha(x_\alpha) = x_\alpha$ for $\alpha \in \Lc(T)$, and for $\alpha \in \Ic(T)$, 
$$
g_{\alpha}(x_\alpha) = f_\alpha((g_\beta(x_\beta))_{\beta\in S(\alpha)})
$$
where $g_\beta : \Xc_\beta \to I^\beta \subset \Rbb$ and $f_\alpha$ is a multivariate function.
The function $f$ is completely determined by the set of multivariate functions  
\[  f_\alpha : \bigtimes_{\beta \in S(\alpha)} I^\beta\to I^\alpha ,\quad  \alpha \in \Ic(T) .  \]
 \begin{example}
For the dimension tree $T$ of Figure \ref{fig:example_tree}, the function $f $ admits the representation
$$
f(x) = f_{\{1,2,3,4,5\}}(f_{\{1,2,3\}}(x_1,f_{\{2,3\}}(x_2,x_3)),f_{\{4,5\}}(x_4,x_5)).
$$
Note that for $\alpha \in \Ic(T) \setminus \{D\}$, we can take $I^{\alpha} = [-\Vert f_\alpha \Vert_{L^\infty},\Vert f_\alpha \Vert_{L^\infty}]$. 

\end{example}
With $\Ic_\ell(T) = \{ \alpha \in \Ic(T)\colon \level(\alpha) = \ell \}$ the set of interior nodes with level $\ell$ and $\mathcal{V}_\ell = \Ic_\ell(T) \cup \{ \alpha \in \Lc(T) \colon \level(\alpha)\leq \ell \}$, we define the compositions of a given function $G_{\ell} \colon \bigtimes_{\beta \in \Vc_{\ell+1}} I^\beta \to I^D$ with all $f_\alpha$, $\alpha \in \Ic_{\ell+1}(T)$, by
\begin{multline}\label{eq:levelwisecomposition}
 G_{\ell} \circ_{\ell} (f_\alpha)_{\alpha \in \Ic_{\ell+1}(T)}
   = \left( (x_\alpha)_{\alpha \in \Vc_{\ell+1}} \mapsto  G_{\ell}\bigl((x_\alpha)_{\alpha \in \Vc_{\ell+1}} \bigr) \big|_{x_\beta = f_{\beta} ( (x_{\gamma})_{\gamma \in S(\beta)} ) \text{ for $\beta \in \Ic_{\ell+1}(T)$}} \right) .
\end{multline}
Starting with $\mathcal{C}_0 ({\{ f_\alpha\}_{\alpha\in \Ic(T)}}) = f_D$, we now recursively define the compositions of all $f_\alpha$ up to a given level $\ell$ in $T$ by
\[
 \mathcal{C}_\ell ( {\{ f_\alpha\}_{\alpha\in \Ic(T)}} ) 
   =   \\
     \mathcal{C}_{\ell-1} \bigl( {\{ f_\alpha\}_{\alpha\in \Ic(T)}} \bigr) \circ_{\ell-1} (f_\alpha)_{\alpha \in \Ic_{\ell}(T)}.
\]
We denote by $\mathcal{C}$ the map which associates to the entire set of functions $\{f_\alpha\}_{\alpha\in \Ic(T)}$ the compositional function $f \in \Fc^T$,
  \[
    f= \mathcal{C}(\{f_\alpha\}_{\alpha\in \Ic(T)}) =  \mathcal{C}_{\depth(T)-1}(\{f_\alpha\}_{\alpha\in \Ic(T)}) .
  \] 
We now restrict the class to functions with parameters $f_\alpha$ having Sobolev regularity  $W^{s,\infty}$ {with $s \in \Nbb$} by introducing 
 \[
  \Fc_s^T = \{f= \mathcal{C}(\{f_\alpha\}_{\alpha\in \Ic(T)}) : f_\alpha \in W^{s,\infty},\alpha\in \Ic(T)\}.
  \]
Next, we introduce a subset of $\Fc_s^T$ where the norms of parameters $f_\alpha$ are controlled. 
For a given $B=(B_1,\hdots,B_s) \ge (1,\hdots,1)$, we {define} 
\begin{align*}
\Fc^T_{s,B} = \bigl\{f \in \Fc^T :\; &\Vert f_D \Vert_{W^{s,\infty}} \le 1, \\
&\Vert D^{k_\alpha} f_\alpha \Vert_{L^\infty} \le B_{\vert k_\alpha \vert} \text{ for  $1 \le \vert k_\alpha \vert \le s$  and $\alpha \in \Ic(T) \setminus \{D\}$} \bigr\} ,
\end{align*}
where $k_\alpha$ is a multi-index in $\Nbb^{\#S(\alpha)}$. 
For any
$f\in \Fc^T_{s,B}$, we have $\Vert D^{k_\alpha} f_\alpha \Vert_{L^\infty} \le B_1$ for $\vert {k_\alpha} \vert \le 1$. Using the chain rule, we can prove that $
\Vert   f \Vert_{W^{s,\infty}} \le p_{T,s,B},
$ with $p_{T,s,B}$ depending on the tree $T$, on $s$ and on $B$.

\subsection{Approximation of  functions in $\Fc^T_{s}$ using tree tensor networks}

\subsubsection{An approach based on linear widths}\label{sec:compositional-linear-widths}

Let $T$ be a dimension tree over $D=\{1,\hdots,d\}$ and consider a compositional function $f\in \Fc^T_{s,B}$.

\begin{lemma}\label{lem:composition_hi}
Let $f \in \Fc^T_{s,B}$. For $\alpha \in T \setminus \{D\}$, $$f(x) = F_\alpha(g_\alpha(x_\alpha),x_{\alpha^c}),$$ with 
$F_\alpha : I^\alpha \times \Xc_{\alpha^c} \to \Rbb $ such that for any fixed $x_{\alpha^c}$,   
 $F_\alpha(\cdot,x_{\alpha^c}) \in S_{\level{(\alpha)}}^{s,\infty,B}$ with   
$$
S_\ell^{s,\infty,B} =  \{h = h_1\circ \hdots \circ h_\ell : \Vert h_1\Vert_{W^{s,\infty}}\le 1 , \Vert D^j h_i\Vert_{L^\infty} \le B_j, 0\le j\le s,2\le i \le \ell\} .$$
\end{lemma}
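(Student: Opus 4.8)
The plan is to prove the claim by induction on $\level(\alpha)$, peeling off one level of composition at a time as we move from a node $\alpha$ toward the root $D$. The key observation is structural: for $\alpha \in T \setminus \{D\}$, there is a unique parent node $\pi(\alpha) \in \Ic(T)$ with $\alpha \in S(\pi(\alpha))$, and by the definition of $\Fc^T$ we may write $g_{\pi(\alpha)}(x_{\pi(\alpha)}) = f_{\pi(\alpha)}\bigl((g_\beta(x_\beta))_{\beta \in S(\pi(\alpha))}\bigr)$, which exhibits $g_{\pi(\alpha)}$ as a function of $g_\alpha(x_\alpha)$ and of the variables $x_\beta$ for the siblings $\beta \in S(\pi(\alpha)) \setminus \{\alpha\}$. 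Since those sibling groups of variables are disjoint from $x_\alpha$ and contained in $x_{\alpha^c}$, composing with $f_{\pi(\alpha)}$ lets us express $g_{\pi(\alpha)}$ as $h_2$ applied (in its first argument) to $g_\alpha(x_\alpha)$, with $h_2 = f_{\pi(\alpha)}(\cdot, (g_\beta)_{\beta \ne \alpha})$ depending on $x_{\alpha^c}$ as a parameter; the bounds $\Vert D^j h_2 \Vert_{L^\infty} \le B_j$ follow directly from the defining bounds $\Vert D^{k_{\pi(\alpha)}} f_{\pi(\alpha)} \Vert_{L^\infty} \le B_{|k_{\pi(\alpha)}|}$ for $\pi(\alpha) \in \Ic(T) \setminus \{D\}$ (when $\pi(\alpha) = D$ we instead use $\Vert f_D \Vert_{W^{s,\infty}} \le 1$, which gives the $h_1$ factor).

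Concretely, I would set $F_\alpha(y, x_{\alpha^c}) := F_{\pi(\alpha)}\bigl(f_{\pi(\alpha)}(y, (g_\beta(x_\beta))_{\beta \in S(\pi(\alpha)) \setminus \{\alpha\}}),\, x_{(\pi(\alpha))^c}\bigr)$ in the inductive step, where $F_{\pi(\alpha)}$ is supplied by the induction hypothesis applied to the parent (which sits one level closer to the root, i.e. $\level(\pi(\alpha)) = \level(\alpha) - 1$). Unwinding: $F_\alpha(g_\alpha(x_\alpha), x_{\alpha^c}) = F_{\pi(\alpha)}(g_{\pi(\alpha)}(x_{\pi(\alpha)}), x_{(\pi(\alpha))^c}) = f(x)$, using that $x_{\pi(\alpha)}$ together with $x_{(\pi(\alpha))^c}$ recovers all of $x$, and that the sibling data $x_\beta$ are all part of $x_{\alpha^c}$. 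For the base case $\level(\alpha) = 1$, the parent is $D$, and $f(x) = f_D\bigl((g_\beta(x_\beta))_{\beta \in S(D)}\bigr)$; fixing all arguments except the one indexed by $\alpha$ gives $F_\alpha(\cdot, x_{\alpha^c}) = f_D(\cdot, (g_\beta)_{\beta \in S(D) \setminus \{\alpha\}})$, which lies in $S_1^{s,\infty,B} = \{h_1 : \Vert h_1 \Vert_{W^{s,\infty}} \le 1\}$ since restricting a function with $\Vert f_D\Vert_{W^{s,\infty}} \le 1$ to a slice does not increase that norm.

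For the inductive step, given $F_{\pi(\alpha)}(\cdot, x_{(\pi(\alpha))^c}) = h_1 \circ h_2 \circ \cdots \circ h_{\ell-1} \in S_{\ell-1}^{s,\infty,B}$ with $\ell = \level(\alpha)$, I append the new innermost map $h_\ell := f_{\pi(\alpha)}(\cdot, (g_\beta(x_\beta))_{\beta \ne \alpha})$ to obtain $F_\alpha(\cdot, x_{\alpha^c}) = h_1 \circ \cdots \circ h_{\ell-1} \circ h_\ell$, which is in $S_\ell^{s,\infty,B}$ precisely because $\Vert D^j h_\ell \Vert_{L^\infty} \le B_j$ for $0 \le j \le s$ (the $j=0$ bound being the range constraint $h_\ell$ maps into $I^{\pi(\alpha)}$, which is bounded by $B_0$ after normalization, or simply absorbed since $B \ge (1,\ldots,1)$; one should be slightly careful here that the statement's index set for $j$ starts at $0$, whereas $\Fc^T_{s,B}$ only controls $1 \le |k_\alpha| \le s$ — but the $j=0$ term is just an $L^\infty$ bound on $h_\ell$ itself, which holds because its values lie in the bounded interval $I^{\pi(\alpha)}$). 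The main obstacle, and the step deserving the most care, is precisely this bookkeeping of which variables go where: verifying that the sibling variable groups $x_\beta$ for $\beta \in S(\pi(\alpha)) \setminus \{\alpha\}$ are genuinely subsets of $x_{\alpha^c}$ and that, together with $x_\alpha$ and the remaining "outer" variables $x_{(\pi(\alpha))^c}$, they partition $x_D$ correctly — this is a consequence of $T$ being a dimension partition tree (each $S(\cdot)$ is a partition of the parent), but it must be spelled out to make the substitution in \eqref{eq:levelwisecomposition} legitimate. Everything else — the norm bounds, the composition structure — follows mechanically from the chain rule and the definitions of $\Fc^T_{s,B}$ and $S_\ell^{s,\infty,B}$.
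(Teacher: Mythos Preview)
Your proof is correct and takes essentially the same approach as the paper's: both identify $F_\alpha(\cdot, x_{\alpha^c})$ as $h_1 \circ \cdots \circ h_\ell$ where $h_i$ is the partial evaluation of $f_{\beta_i}$ (the ancestor of $\alpha$ at level $i-1$) at the sibling values $(g_\beta(x_\beta))_{\beta \in S(\beta_i)\setminus\{\beta_{i+1}\}}$, the only difference being that you build this up inductively on $\level(\alpha)$ while the paper writes out the full ancestor chain $D = \beta_1 \supset \cdots \supset \beta_\ell = P(\alpha)$ directly. Your caution about the $j=0$ case in $S_\ell^{s,\infty,B}$ is well placed --- the paper's own proof only verifies $1 \le j \le s$, and $B_0$ is not defined in $B=(B_1,\ldots,B_s)$, so this appears to be a minor slip in the statement rather than a gap in your argument.
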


\begin{proof}
Let $\alpha \in T\setminus \{D\}$. Let $P(\alpha) \in T$ be the parent node of $\alpha$ in $T$ (such that $\alpha \in S(P(\alpha))$), let $B(\alpha) \subset T$ be the set of brothers of $\alpha$ (such that $B(P(\alpha)) \cup \{\alpha\} = S(\alpha)$), and let $A_\alpha$ be the ancestors of $\alpha$,  which is of cardinality
$
\#A_\alpha = %
\level(\alpha).
$  
We let $F_\alpha : I_\alpha \times \Xc_{\alpha^c} \to \Rbb $ be the function such that 
$$f(x) = F_\alpha(y_\alpha , x_{\alpha^c}) \quad  \text{with} \quad y_\alpha = g_\alpha(x_\alpha).$$ 
Letting $\ell = \level(\alpha)$, and letting $A_\alpha = \{\beta_1 , \hdots , \beta_{\ell} \}$ be the {ancestors}  of $\alpha$ ordered by increasing level (that is, {$D = \beta_1  \supset \hdots \supset \beta_{\ell} = P(\alpha) $}), the function $F_\alpha$ admits the representation
$$ F_\alpha(t, x_{\alpha^c}) = f_{\beta_1}( f_{\beta_2}(\hdots f_{\beta_\ell}(t, (y_\beta)_{\beta \in B(\alpha)})\hdots ),(y_{\beta})_{\beta \in B(\beta_2)}),$$
with  $y_\beta = g_\beta(x_\beta)$. 
Therefore, for a fixed $x_{\alpha^c}$, the function $F_\alpha(\cdot , x_{\alpha^c}) : I_\alpha \to \Rbb $ can be written as $F_\alpha(t , x_{\alpha^c}) = h_1 \circ \hdots \circ h_\ell (t)$, where $h_{i}(\cdot) = f_{\beta_i}(\cdot , (y_\beta)_{\beta \in B(\beta_{i+1})}) : I_{\beta_{i+1}} \to I_{\beta_{i}}$
satisfies
$
h_i \in W^{s,\infty}.
$ 
We have $h_1(\cdot) = f_D(\cdot,(y_\beta)_{\beta \in B(\beta_{2})}) $, so that $\Vert h_1 \Vert_{W^{s,\infty}} \le \Vert f_D \Vert_{W^{s,\infty}}   \le 1. $ And for $2 \le i\le \ell$ and $1\le j \le s$, $\Vert D^j h_i \Vert_{L^\infty} = \Vert D^{(j,0)} f_{\beta_i} \Vert_{L^\infty} $, with $(j,0) \in \Nbb^{\#S(\beta_i)}$, so that  $\Vert D^j h_i \Vert_{L^\infty} \le B_j$.
\end{proof}

We consider the sets of partial evaluations
\[
  K_\alpha(f) = \bigl\{f(\cdot,x_{\alpha^c}) : x_{\alpha^c} \in \Xc_{\alpha^c} \bigr\} =  \{F_\alpha ( g_\alpha(\cdot) , x_{\alpha^c}) : x_{\alpha^c} \in \Xc_{\alpha^c}\},
\] 
for which we have the following width estimate.

\begin{lemma}\label{lem:bound_width1}
For $f \in \Fc^T_{s,B}$ and any $\alpha \in T \setminus \{D\}$, 
$$
d_k(K_\alpha(f))_{L^\infty(\Xc_{\alpha})} \le d_{k}\bigl(S_{\level{(\alpha)}}^{s,\infty,B}\bigr)_{L^\infty(I^\alpha)}.
$$
\end{lemma}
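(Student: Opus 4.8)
The plan is to realize $K_\alpha(f)$ as a subset of the image of $S^{s,\infty,B}_{\level(\alpha)}$ under post-composition with the inner map $g_\alpha$, and then to transport a near-optimal approximating subspace for the Kolmogorov width of $S^{s,\infty,B}_{\level(\alpha)}$ through that post-composition. Post-composition with $g_\alpha$ is linear, cannot increase dimensions, and — thanks to $g_\alpha(\Xc_\alpha)\subseteq I^\alpha$ — is non-expansive for the sup norm, so it carries both the subspace and the error bound in the right direction.

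Concretely, I would argue as follows. By \Cref{lem:composition_hi}, every partial evaluation $F_\alpha(\cdot,x_{\alpha^c})$, $x_{\alpha^c}\in\Xc_{\alpha^c}$, belongs to $S^{s,\infty,B}_{\level(\alpha)}$, hence $K_\alpha(f)\subseteq\{\,h\circ g_\alpha : h\in S^{s,\infty,B}_{\level(\alpha)}\,\}$. Since $s\in\Nbb$ with $s\ge 1$, each $f_\beta$ is Lipschitz, so $g_\alpha\colon\Xc_\alpha\to I^\alpha$ is continuous, and the members of $S^{s,\infty,B}_{\level(\alpha)}$ are continuous as well; thus $\Phi\colon u\mapsto u\circ g_\alpha$ is a well-defined linear map on $C(I^\alpha)$ with $\|u\circ g_\alpha\|_{L^\infty(\Xc_\alpha)}=\sup_{x_\alpha\in\Xc_\alpha}|u(g_\alpha(x_\alpha))|\le\sup_{y\in I^\alpha}|u(y)|=\|u\|_{L^\infty(I^\alpha)}$. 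Fix $\eta>0$ and pick a $k$-dimensional subspace $V\subset C(I^\alpha)$ with $\sup_{h\in S^{s,\infty,B}_{\level(\alpha)}}E(h,V)_{L^\infty(I^\alpha)}\le d_k(S^{s,\infty,B}_{\level(\alpha)})_{L^\infty(I^\alpha)}+\eta$; then $W:=\Phi(V)$ is a subspace of $L^\infty(\Xc_\alpha)$ of dimension at most $k$, and for any $w=h\circ g_\alpha\in K_\alpha(f)$ and $v\in V$,
\[
 \|w-v\circ g_\alpha\|_{L^\infty(\Xc_\alpha)}=\|(h-v)\circ g_\alpha\|_{L^\infty(\Xc_\alpha)}\le\|h-v\|_{L^\infty(I^\alpha)} .
\]
Minimizing over $v$, then taking the supremum over $w$ (after enlarging $W$ to dimension exactly $k$ if needed), and finally letting $\eta\to 0$ yields $d_k(K_\alpha(f))_{L^\infty(\Xc_\alpha)}\le d_k(S^{s,\infty,B}_{\level(\alpha)})_{L^\infty(I^\alpha)}$.

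The one point that needs care is the interplay between $L^\infty$-classes and composition: for a merely measurable $g_\alpha$, $u\mapsto u\circ g_\alpha$ need not be well behaved, since the preimage of a null set need not be null, so the non-expansiveness estimate genuinely uses that $g_\alpha$ is continuous — available here because the $f_\beta$ are Lipschitz for integer $s\ge 1$ — together with the fact that the functions being approximated are continuous. Correspondingly I take the near-optimal subspace $V$ inside $C(I^\alpha)$; this is harmless because $S^{s,\infty,B}_{\level(\alpha)}$ is an equicontinuous (indeed $W^{s,\infty}$-bounded, via the chain rule) family of continuous functions, so its $L^\infty(I^\alpha)$-width is approached by continuous $k$-dimensional subspaces, e.g.\ by mollifying an almost-optimal one. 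Beyond this, the proof is routine bookkeeping.
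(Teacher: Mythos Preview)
Your proof is correct and follows essentially the same approach as the paper: both exploit \Cref{lem:composition_hi} to write every element of $K_\alpha(f)$ as $h\circ g_\alpha$ with $h\in S^{s,\infty,B}_{\level(\alpha)}$, then restrict the competing subspaces in the Kolmogorov width to those of the form $\{w\circ g_\alpha : w\in W\}$ and use the non-expansiveness of post-composition with $g_\alpha$ in $L^\infty$. The paper phrases this as a direct chain of inequalities on the widths (passing through the intermediate set $K_1(F_\alpha)=\{F_\alpha(\cdot,x_{\alpha^c})\}$), whereas you package the same move via the linear map $\Phi$ and add some extra care about continuity and the choice of $V\subset C(I^\alpha)$ that the paper does not spell out, but the substance is identical.
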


\begin{proof}[Proof of \Cref{lem:bound_width1}]
Recall that $K_\alpha(f) = \{F_\alpha ( g_\alpha(\cdot) , x_{\alpha^c}) : x_{\alpha^c} \in \Xc_{\alpha^c}\}$, with $g_\alpha : \Xc_\alpha \to I_\alpha$. Therefore
\begin{align*}
d_k(K_\alpha(f))_{L^\infty(\Xc_{\alpha})} &= \inf_{\dim(V_\alpha)=k} \sup_{x_{\alpha^c}} \inf_{v \in V_\alpha}  \Vert F_\alpha ( g_\alpha(\cdot) , x_{\alpha^c})  - v(\cdot) \Vert_{L^\infty(\Xc_{\alpha})}
\\
&\le \inf_{\dim(W)=k} \sup_{x_{\alpha^c}} \inf_{w \in W}   \Vert F_\alpha ( g_\alpha(\cdot) , x_{\alpha^c})  - w(g_\alpha(\cdot)) \Vert_{L^\infty(\Xc_{\alpha})},
\end{align*}
where the  inequality has been obtained by restricting the minimization over $k$-dimensional subspaces $V_\alpha = \{w(g_\alpha(\cdot)) : w \in W\}$, with $W$ a $k$-dimensional subspace of functions defined on $I_\alpha$.
Then, introducing 
$K_1(F_\alpha) = \{F_\alpha(\cdot,x_{\alpha^c}) : x_{\alpha^c} \in \Xc_{{\alpha^c}}\} \subset L^\infty(I_\alpha)$,
we have 
\begin{align*}
d_k(K_\alpha(f))_{L^\infty(\Xc_{\alpha})} &\le   \inf_{\dim(W)=k} \sup_{h \in K_1(F_\alpha)} \inf_{w \in W}   \Vert h ( g_\alpha(\cdot)) - w ( g_\alpha(\cdot))\Vert_{L^\infty(\Xc_{\alpha})}
\\
&\le \inf_{\dim(W)=k} \sup_{h \in K_1(F_\alpha)} \inf_{w \in W}   \Vert h   - w  \Vert_{L^\infty(I_\alpha)}
\\
&= d_k(K_1(F_\alpha))_{L^\infty(I_\alpha)}.
\end{align*}
The result now follows from the fact that  $K_1(F_\alpha) \subset S_{\level{(\alpha)}}^{s,\infty,B}$.
\end{proof}

\begin{lemma}\label{lem:composition_sobolev}
For $h= h_1 \circ \hdots \circ h_\ell  \in  S_\ell^{s,\infty,B}$, we have
$$
\Vert h \Vert_{W^{s,\infty}} \le C(B,s,\ell),
$$
where $C(B,1,\ell) =   B_1^{\ell-1}$, $C(B,2,\ell) =  \ell B_1^{2\ell-2} B_2,$ and more generally, 
$$
C(B,s,\ell) =  (C \ell)^{s-1} B_1^{s(\ell-1)} B_\star^s%
$$
with $C = B_\star=1$ for $s=1$, and $B_\star = \max_{1\le j\le s } B_j$ and $C\ge 1$ for $s\ge 2$. 
\end{lemma}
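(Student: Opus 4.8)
The plan is to prove the bound by induction on the composition depth $\ell$, using the chain rule (Fa\`a di Bruno) to control $\Vert h_1 \circ \cdots \circ h_\ell \Vert_{W^{s,\infty}}$. The natural inductive quantity is $g_\ell := h_2 \circ \cdots \circ h_\ell$, which lies in $S_{\ell-1}^{s,\infty,B'}$ with $B'$ obtained from $B$ by replacing the outermost normalization (so $\Vert g_\ell \Vert_{W^{s,\infty}}$ is governed by $B_\star$ rather than $1$), and $h = h_1 \circ g_\ell$ with $\Vert h_1 \Vert_{W^{s,\infty}} \le 1$. The base case $\ell = 1$ is immediate ($\Vert h_1 \Vert_{W^{s,\infty}} \le 1 = C(B,s,1)$), and the cases $s = 1, 2$ should be handled first as warm-ups: for $s=1$, $\Vert (h_1 \circ \cdots \circ h_\ell)' \Vert_{L^\infty} \le \prod_{i=1}^\ell \Vert h_i' \Vert_{L^\infty} \le \Vert h_1' \Vert_{L^\infty} B_1^{\ell-1} \le B_1^{\ell-1}$ by the chain rule and the fact that $\Vert h_1 \Vert_{W^{1,\infty}} \le 1$; for $s=2$, one differentiates twice, getting terms $h_1''(g_\ell) (g_\ell')^2$ and $h_1'(g_\ell) g_\ell''$, and bounds $g_\ell'$ and $g_\ell''$ recursively, which produces the stated $C(B,2,\ell) = \ell B_1^{2\ell - 2} B_2$ (the factor $\ell$ coming from summing the $\ell-1$ "second-derivative" contributions along the chain, each of size $\lesssim B_1^{2\ell-2} B_2$).

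For general $s \ge 2$, the key step is to invoke Fa\`a di Bruno's formula for $(h_1 \circ g_\ell)^{(s)}$: it expresses the $s$-th derivative as a sum over partitions of $\{1,\dots,s\}$ of terms $h_1^{(m)}(g_\ell) \prod_{j} (g_\ell^{(b_j)})$, where $m$ is the number of blocks and the $b_j \ge 1$ are the block sizes summing to $s$. Since $\Vert h_1 \Vert_{W^{s,\infty}} \le 1$, every factor $\Vert h_1^{(m)} \Vert_{L^\infty} \le 1$, so the bound reduces to controlling products of lower-order derivatives of $g_\ell = h_2 \circ \cdots \circ h_\ell$, each of order $\le s$. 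One then needs an auxiliary estimate: for $g = h_2 \circ \cdots \circ h_\ell$ with $\Vert D^j h_i \Vert_{L^\infty} \le B_j$ for $0 \le j \le s$, $2 \le i \le \ell$, one has $\Vert g^{(j)} \Vert_{L^\infty} \le (C\ell)^{j-1} B_1^{j(\ell-1)} B_\star^{j}$ for $1 \le j \le s$ — this is essentially the same statement being proved but without the outer-function normalization, and it follows by the same induction (or can be folded into a single induction on $\ell$ with $j$ ranging over $1,\dots,s$). Plugging these into the Fa\`a di Bruno sum, each partition-term contributes at most $\prod_j (C\ell)^{b_j - 1} B_1^{b_j(\ell-1)} B_\star^{b_j} = (C\ell)^{s - m} B_1^{s(\ell-1)} B_\star^{s}$, and the number of set partitions of $\{1,\dots,s\}$ is the Bell number $B_s$, an $s$-dependent constant absorbed into the $(C\ell)^{s-1}$ prefactor (noting $s-m \le s-1$). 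This yields $C(B,s,\ell) \le (C'\ell)^{s-1} B_1^{s(\ell-1)} B_\star^s$ with $C'$ depending only on $s$, as claimed.

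The main obstacle is bookkeeping the constants so that the $\ell$-dependence comes out as exactly $(C\ell)^{s-1}$ rather than something worse like $\ell^{s}$ or $(\ell!)^{c}$: a careless induction that multiplies an $\ell$-dependent factor at each of the $\ell$ composition levels would overshoot. The resolution is that in the Fa\`a di Bruno expansion for the full composition, the "branching" (partition) happens only once at the outermost level and each inner derivative $g_\ell^{(b_j)}$ carries an $\ell$-power that is already accounted for, so the extra $\ell$-factors accumulate additively (one per level, as in the $s=2$ case) rather than multiplicatively across derivative orders; tracking this carefully — ideally by stating the auxiliary lemma for $g$ with the sharp form $(C\ell)^{j-1}$ and proving it by a single clean induction on $\ell$ — is where the real work lies. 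A secondary point is making sure the hypothesis $B \ge (1,\dots,1)$ is used to guarantee $B_1^{b_j(\ell-1)} B_\star^{b_j} \ge 1$ so that the crude partition-counting bound is legitimate, and that $I^\alpha$ being bounded lets one control $\Vert g \Vert_{L^\infty} = \Vert h_2 \circ \cdots \Vert_{L^\infty}$ (the $j=0$ factor $\Vert h_1^{(m)}(g_\ell)\Vert$ is fine since it is just a sup over the range of $g_\ell$).
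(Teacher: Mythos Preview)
Your proposal is correct and in fact more complete than the paper's own proof, which only writes out the cases $s=1$ and $s=2$ explicitly and leaves the general $s \ge 3$ bound unproved. The paper's approach for those two cases is somewhat different from yours: rather than inducting on the composition depth $\ell$, it writes the first derivative directly as the $\ell$-fold product $h'(t) = \prod_{i=1}^\ell g_{1,i}(t)$ with $g_{1,i} = h_i' \circ h_{>i}$, and then obtains $h''$ by applying the product rule to this $\ell$-term product, so the factor $\ell$ in $C(B,2,\ell)$ arises from the $\ell$ summands in the product rule rather than from an inductive accumulation. Your scheme instead peels off the outermost layer $h = h_1 \circ g_\ell$ and controls the derivatives of $g_\ell$ by an auxiliary induction on $\ell$, then combines them via Fa\`a di Bruno; this is better adapted to the general-$s$ statement, since the paper's direct product-differentiation becomes combinatorially awkward beyond $s=2$. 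Your identification of the main difficulty---showing that the induction on $\ell$ yields only $(C\ell)^{s-1}$ and not something worse---is apt: the key is that in the step from $\ell-1$ to $\ell$, the single-block partition (which carries the full $(m-1)^{j-1}$ power) is multiplied only by a $B_1$ factor (from $h'$), while all multi-block partitions contribute terms of order at most $(m-1)^{j-2}$ and are absorbed using $m^{j-1} - (m-1)^{j-1} \gtrsim (m-1)^{j-2}$, provided $\tilde C_j$ is chosen large enough relative to $\tilde C_1,\dots,\tilde C_{j-1}$ and $B_\star$. One small remark: your closing comment about needing $I^\alpha$ bounded to control $\Vert g_\ell \Vert_{L^\infty}$ is not actually needed---the zeroth-order term $\Vert h \Vert_{L^\infty} \le \Vert h_1 \Vert_{L^\infty} \le 1$ is immediate, and the auxiliary derivative bounds on $g_\ell^{(j)}$ for $j \ge 1$ do not require $\Vert g_\ell \Vert_{L^\infty}$.
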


\begin{proof}
We first note that $\Vert h \Vert_{L^\infty} \le \Vert h_1\Vert_{L^\infty} \le 1$.
Let $h_{>i}(t) = h_{i+1} \circ \hdots  \circ h_\ell(t) $. 
 Then
$h'(t) = \prod_{i=1}^\ell g_{1,i}(t),$ with $g_{1,i}(t) = h_i'\circ h_{>i}(t)$.  
 Therefore $\Vert h' \Vert_{L^\infty} \le B_1^{\ell-1}$. Then we have $h''(t) = \sum_{i=1}^\ell g_{1,i}'(t) \prod_{j\neq i} g_{1,j}(t)$, with $g_{1,i}'(t) = h_i''(h_{>i}(t)) \prod_{j>i} g_{1,j}(t) := g_{2,i}(t).$ Then, we have  
$\Vert h''\Vert_{L^\infty} \le \sum_{i=1}^\ell B_2 B_1^{2\ell-i-1} \le \ell B_2 B_1^{2(\ell-1)}.$ 
\end{proof}

From \Cref{lem:bound_width1,lem:composition_sobolev} and \Cref{kolmogorov-width-sobolev-balls}, we directly obtain the following result.
\begin{lemma}\label{lem:k-width-composition}
For $f \in \Fc^T_{s,B}$ and any node $\alpha \in T \setminus \{D\}$ with level $\ell_\alpha$,
\[
d_n(K_\alpha(f))_{L^\infty(\Xc_{\alpha})}  \le  R C(B,s,\ell_\alpha) n^{-s}, \quad n \in \Nbb,
\]
with a constant $R$ not depending on $\ell_\alpha$, $B$ and $d$. 
\end{lemma}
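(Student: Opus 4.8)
The plan is to obtain the estimate as a direct chain of the three cited ingredients, keeping the error measured in $L^\infty$ throughout. Fix $\alpha \in T \setminus \{D\}$ and set $\ell = \level(\alpha)$. First I would invoke \Cref{lem:bound_width1} to pass from the set of partial evaluations to the univariate composition class,
\[
  d_n(K_\alpha(f))_{L^\infty(\Xc_\alpha)} \le d_n\bigl(S_\ell^{s,\infty,B}\bigr)_{L^\infty(I^\alpha)},
\]
so that everything reduces to a one-dimensional width on the interval $I^\alpha$. Next I would use \Cref{lem:composition_sobolev}, which says that every $h \in S_\ell^{s,\infty,B}$ satisfies $\Vert h \Vert_{W^{s,\infty}(I^\alpha)} \le C(B,s,\ell)$; hence $S_\ell^{s,\infty,B} \subset C(B,s,\ell)\, B_1(W^{s,\infty}(I^\alpha))$, and by homogeneity of the Kolmogorov width,
\[
  d_n\bigl(S_\ell^{s,\infty,B}\bigr)_{L^\infty(I^\alpha)} \le C(B,s,\ell)\, d_n\bigl(B_1(W^{s,\infty}(I^\alpha))\bigr)_{L^\infty(I^\alpha)} .
\]
It then remains to bound the last factor by $R\, n^{-s}$.

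For that last step I would use the univariate ($m=1$) $L^\infty$-analogue of \Cref{kolmogorov-width-sobolev-balls}: taking $V_n$ to be the splines of order $s$ on $n$ uniform subintervals of $I^\alpha$ and a quasi-interpolation operator $Q_n \colon W^{s,\infty}(I^\alpha) \to V_n$, the classical Jackson estimate $\Vert g - Q_n g \Vert_{L^\infty} \le C_s\, n^{-s} \Vert g^{(s)} \Vert_{L^\infty}$ gives $d_n(B_1(W^{s,\infty}(I^\alpha)))_{L^\infty(I^\alpha)} \le R\, n^{-s}$. Substituting this into the chain above yields $d_n(K_\alpha(f))_{L^\infty(\Xc_\alpha)} \le R\, C(B,s,\ell)\, n^{-s}$, which is the assertion.

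Since all the work has already been done in \Cref{lem:bound_width1} and \Cref{lem:composition_sobolev}, there is no deep obstacle here; the statement is essentially a corollary. The one point that requires attention is that \Cref{kolmogorov-width-sobolev-balls} as stated controls the $L^2$-width, whereas here one needs the $L^\infty$-width, and moreover the constant in the Jackson estimate on $I^\alpha$ a priori carries a factor $\max(1, |I^\alpha|^s)$ coming from rescaling to a reference interval. For leaf nodes this is harmless, since $I^\alpha = \Xc_\nu$ is a fixed factor of $\Xc$; for interior nodes one should check that $|I^\alpha|$ can be kept bounded independently of $B$ (using the normalization of the component functions, or absorbing a bound on $\Vert f_\alpha \Vert_{L^\infty}$), so that $R$ can indeed be chosen independent of $\ell_\alpha$, $B$ and $d$. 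This bookkeeping is the only place where a little care is needed.
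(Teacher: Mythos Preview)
Your proposal is correct and follows exactly the route the paper takes: the paper gives no proof beyond the sentence ``From \Cref{lem:bound_width1,lem:composition_sobolev} and \Cref{kolmogorov-width-sobolev-balls}, we directly obtain the following result,'' and your chain $d_n(K_\alpha(f)) \le d_n(S_\ell^{s,\infty,B}) \le C(B,s,\ell)\, d_n(B_1(W^{s,\infty})) \le R\,C(B,s,\ell)\,n^{-s}$ is precisely the intended unpacking. Your observation that \Cref{kolmogorov-width-sobolev-balls} is stated for $H^s$ in $L^2$ rather than $W^{s,\infty}$ in $L^\infty$ is a genuine gap in the paper's phrasing that you correctly patch with the one-dimensional spline Jackson estimate; the residual bookkeeping about $|I^\alpha|$ you flag is real but minor.
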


For each $\nu \in D$, we introduce a sequence of spaces $U_{\nu,n_\nu}$ with dimension $n_\nu$ (e.g. splines or wavelets) such that for all $u\in H^{s}(\Xc_\nu),$
 \begin{align}\label{composition-Unu}
 E(u , U_{\nu,n_\nu})_{X_{\nu}} \le M  n_\nu^{-s}\Vert u \Vert_{H^{s}},
\end{align}
with $M\ge R$, with $R$ the constant from \Cref{lem:k-width-composition}.  
\begin{lemma}\label{lem:approx-leaves} Let $f\in \Fc^T_{s,B}$ and $\nu \in D$. For all $u\in K_{\{\nu\}}(f)$, with $\ell_\nu = \level({\{\nu\}})$, we have
\[
E(u , U_{\nu,n_\nu})_{X_{\nu}} \le M C(B,s,\ell_\nu) n_\nu^{-s}
\]
with $M$ a constant not depending on $d$.
\end{lemma}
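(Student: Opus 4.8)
The plan is to combine the $W^{s,\infty}$-regularity estimate for compositions (\Cref{lem:composition_sobolev}) with the fixed approximation property \eqref{composition-Unu} of the univariate spaces $U_{\nu,n_\nu}$, using the same normalization trick already employed in the proof of \Cref{prop:dp-sobolev}. Concretely, fix $\nu \in D$ and $u \in K_{\{\nu\}}(f)$. By definition of $K_{\{\nu\}}(f)$ together with \Cref{lem:composition_hi}, $u$ is of the form $u = F_{\{\nu\}}(\cdot, x_{\nu^c})$ for some $x_{\nu^c}$, and for each such $x_{\nu^c}$ the function $F_{\{\nu\}}(\cdot,x_{\nu^c})$ lies in $S_{\ell_\nu}^{s,\infty,B}$ with $\ell_\nu = \level(\{\nu\})$. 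Hence $u$ is a composition $h_1\circ\cdots\circ h_{\ell_\nu}$ of the type controlled by \Cref{lem:composition_sobolev}.

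First I would invoke \Cref{lem:composition_sobolev} to get $\Vert u \Vert_{W^{s,\infty}} \le C(B,s,\ell_\nu)$. Since $\Xc_\nu$ is a bounded interval, $W^{s,\infty}(\Xc_\nu) \hookrightarrow H^s(\Xc_\nu)$ with an embedding constant depending only on $|\Xc_\nu|$ and $s$ (in particular not on $d$), so $\Vert u \Vert_{H^s_\nu} \le c\, C(B,s,\ell_\nu)$ for such a $c$. Next, applying \eqref{composition-Unu} to $u$ (or, to stay strictly inside the stated hypothesis, to the normalized function $\Vert u\Vert_{H^s_\nu}^{-1} u \in B_1(H^s_\nu)$ and rescaling, exactly as in \Cref{prop:dp-sobolev}), one obtains
\[
  E(u, U_{\nu,n_\nu})_{X_\nu} \le M\, n_\nu^{-s}\, \Vert u \Vert_{H^s_\nu} \le M\, c\, C(B,s,\ell_\nu)\, n_\nu^{-s}.
\]
Absorbing the embedding constant $c$ into $M$ (which is legitimate since the statement only asks for \emph{some} constant $M$ independent of $d$) yields precisely the claimed bound.

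The one point that needs a little care — and the only place where something could go wrong — is the passage from the $W^{s,\infty}$ bound of \Cref{lem:composition_sobolev} to a form to which \eqref{composition-Unu} applies, since \eqref{composition-Unu} is phrased for $u \in H^s(\Xc_\nu)$ with the $H^s$-norm on the right-hand side, whereas \Cref{lem:composition_sobolev} delivers a $W^{s,\infty}$ bound. On a bounded interval this is harmless, but one should note explicitly that the embedding constant is $d$-independent (it depends only on $|\Xc_\nu|$ and $s$), so that the final $M$ remains independent of $d$; the $d$-dependence of the estimate is entirely funnelled into the factor $C(B,s,\ell_\nu)$ through the level $\ell_\nu$ and the parameters $B$. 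I would also remark that, as in \Cref{prop:dp-sobolev}, if $u \equiv 0$ the bound is trivial, so the normalization step is only needed on $\{u \neq 0\}$. With these observations the proof is a two-line computation.
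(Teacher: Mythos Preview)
Your argument is correct and matches the paper's proof essentially line for line: both use \Cref{lem:composition_hi} to place $u\in K_{\{\nu\}}(f)$ inside $S^{s,\infty,B}_{\ell_\nu}$, invoke \Cref{lem:composition_sobolev} to bound its Sobolev norm by $C(B,s,\ell_\nu)$, and then apply \eqref{composition-Unu}. The only difference is that you spell out the $W^{s,\infty}\hookrightarrow H^s$ embedding and the normalization step explicitly, whereas the paper absorbs these into the single line $\Vert u\Vert_{H^s}\le C(B,s,\ell_\nu)$.
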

\begin{proof}
The set of partial evaluations $K_{\{\nu\}}(f)$ satisfies $K_{\{\nu\}}(f) \subset H^{s}(\Xc_\nu)$, so that \eqref{composition-Unu} holds for all $u\in K_{\{\nu\}}(f)$. Also 
$K_{\{\nu\}}(f) \subset S^{s,\infty,B}_{\ell_\nu}$. Therefore, from \Cref{lem:composition_sobolev}, we have $\Vert u \Vert_{H^{s}} \le C(B,s,\ell_\nu)$ for all $u \in  K_{\{\nu\}}(f)$, which completes the proof.
\end{proof}

\begin{proposition}\label{prop:L2approx}
Let $f\in \Fc^T_{s,B}$. For an admissible rank $r\in \Nbb^{\#T}$ and $U_r = U_{1,r_1} \otimes \hdots \otimes U_{d,r_d}$, we have
\[
 e^T_{r,U_r}(f)_{X}^2 \le  \sum_{\alpha \in T \setminus {D} } \bigl( M  C(B,s,\level(\alpha)) \bigr)^2 \,r_{\alpha}^{-2s}\,.
\]
\end{proposition}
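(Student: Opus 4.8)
The plan is to combine the abstract upper bound from \Cref{prop:best-approx-rU} with the width estimates for compositional functions obtained in \Cref{lem:k-width-composition} and \Cref{lem:approx-leaves}. Recall that \eqref{eq:treeerr2} of \Cref{prop:best-approx-rU} gives, for $U_r = U_{1,r_1}\otimes\cdots\otimes U_{d,r_d}$ with $\dim(U_\nu) = r_\nu$ (so that $A = \Ic(T)$),
\[
  e^T_{r,U_r}(f)^2_X \;\le\; \sum_{\nu=1}^d \int_{\Xc_{\nu^c}} E\bigl(f^\nu(x_{\nu^c}),U_{\nu,r_\nu}\bigr)^2_{X_\nu}\,dx_{\nu^c} + \sum_{\alpha\in\Ic(T)\setminus\{D\}} \bigl(\delta^\alpha_{r_\alpha}(f)\bigr)^2 .
\]
So there are two families of terms to bound: the leaf terms (first sum) and the interior-node terms (second sum).

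For the leaf terms, I would note that for almost every $x_{\nu^c}$ the partial evaluation $f^\nu(x_{\nu^c}) = f(\cdot,x_{\nu^c})$ lies in $K_{\{\nu\}}(f)$, so \Cref{lem:approx-leaves} applies pointwise: $E(f^\nu(x_{\nu^c}),U_{\nu,r_\nu})_{X_\nu} \le M\,C(B,s,\level(\{\nu\}))\,r_\nu^{-s}$. Since this bound is uniform in $x_{\nu^c}$ and $\Xc_{\nu^c}$ has unit measure, integrating gives the leaf contribution $\bigl(M\,C(B,s,\level(\{\nu\}))\bigr)^2 r_\nu^{-2s}$, which is exactly the $\alpha = \{\nu\}$ term in the claimed sum. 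For the interior-node terms, the key observation is that $\delta^\alpha_n(f) \le d_n(K_\alpha(f))_{L^\infty(\Xc_\alpha)}$: indeed $\delta^\alpha_n(f)$ is an average (over $x_{\alpha^c}$, with unit-mass measure) of the $L^2_{X_\alpha}$-best-approximation errors of $f^\alpha(x_{\alpha^c}) \in K_\alpha(f)$ onto a fixed $n$-dimensional subspace $V$, and $\|\cdot\|_{L^2(\Xc_\alpha)} \le \|\cdot\|_{L^\infty(\Xc_\alpha)}$, so taking the optimal $V$ for the Kolmogorov width of $K_\alpha(f)$ in $L^\infty$ controls $\delta^\alpha_n(f)$. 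Then \Cref{lem:k-width-composition} gives $\delta^\alpha_{r_\alpha}(f) \le d_{r_\alpha}(K_\alpha(f))_{L^\infty} \le R\,C(B,s,\level(\alpha))\,r_\alpha^{-s} \le M\,C(B,s,\level(\alpha))\,r_\alpha^{-s}$ since $M\ge R$, matching the $\alpha\in\Ic(T)\setminus\{D\}$ terms.

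Putting the two estimates into \eqref{eq:treeerr2} and combining the leaf sum over $\nu\in D$ (equivalently over $\{\nu\}\in\Lc(T)$) with the interior sum over $\Ic(T)\setminus\{D\}$ yields exactly $\sum_{\alpha\in T\setminus\{D\}} \bigl(M\,C(B,s,\level(\alpha))\bigr)^2 r_\alpha^{-2s}$, which is the asserted bound. The only mildly delicate point is the passage from the $L^\infty$ Kolmogorov width of $K_\alpha(f)$ to the average width $\delta^\alpha_n(f)$; here one must be careful that $\delta^\alpha_n$ involves a genuine infimum over all $n$-dimensional $V\subset X_\alpha$ (not necessarily consisting of $L^\infty$ functions), but since any candidate $V$ realizing the $L^\infty$-width is admissible in the infimum defining $\delta^\alpha_n$, the inequality goes through — and one should also recall $f^\alpha(x_{\alpha^c})\in L^2(\Xc_\alpha)$ a.e.\ with the needed regularity, which follows from $f\in\Fc^T_{s,B}\subset W^{s,\infty}(\Xc)$ via the chain-rule bound $\|f\|_{W^{s,\infty}}\le p_{T,s,B}$ noted earlier. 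Everything else is a routine assembly of the cited lemmas.
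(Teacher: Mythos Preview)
Your proposal is correct and follows essentially the same route as the paper: apply \Cref{prop:best-approx-rU} (with $A=\Ic(T)$ since $\dim U_\nu=r_\nu$), bound the leaf terms via \Cref{lem:approx-leaves} and unit measure, bound the interior terms via $\delta^\alpha_n(f)\le d_n(K_\alpha(f))_{L^\infty(\Xc_\alpha)}$ (again using $\operatorname{meas}(\Xc_{\alpha^c})=1$ and $\|\cdot\|_{L^2}\le\|\cdot\|_{L^\infty}$) together with \Cref{lem:k-width-composition} and $M\ge R$. The paper's proof is a one-line reference to exactly these ingredients; your added justification of the width comparison is sound and only makes the argument more explicit.
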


\begin{proof}
This follows from \Cref{prop:best-approx-rU}, the bound $\delta^\alpha_n ( f)  \le d_n(K_\alpha(f))_{L^\infty(\Xc_{\alpha})}$ which holds since $\operatorname{meas}(\Xc) = 1$,  \Cref{lem:k-width-composition}, and \Cref{lem:approx-leaves}.
\end{proof}

\subsubsection{A constructive approach using uniform approximations}

For each $\alpha \in \Ic(T)$,  let $(\mathcal{Q}^\alpha_{N})_{N \in \mathbb{N}^{\#S(\alpha)}}$ be a family of linear operators mapping $C(\bigtimes_{\beta \in S(\alpha)} I^\beta)$ to a finite-dimensional tensor subspace spanned by product basis functions,
\[
  \mathcal{Q}^\alpha_{N}\colon  C(\bigtimes_{\beta \in S(\alpha)} I^\beta) \to \bigotimes_{\beta \in S(\alpha)}  U_{\beta,N_\beta}
  \]
  with 
  $$
    {U_{\beta,N_\beta}}  =  \operatorname{span}  \{ {\varphi^{\beta}_{N_{\beta}, i}\colon i=1,\ldots,N_{\beta}}\}, \quad {\beta \in S(\alpha).}
  $$
 We assume these operators to have the properties 
 \begin{equation}\label{eq:nonexpQ}
  \| \mathcal{Q}^\alpha_N g \|_{L^\infty} \leq  \| g  \|_{L^\infty}
 \end{equation} 
   and for all $s \in ( 0, s^*]$ with $s^* \in (0,\infty]$,  $\min N :=  \min_{\beta \in S(\alpha)} N_\beta$,
 \begin{equation}\label{eq:approxQ}
     \| g - \mathcal{Q}^\alpha_N g \|_{L^\infty} \leq Q_{\#S(\alpha)}  {\bigl( \min N\bigr)^{-s}}  \| g \|_{W^{s,\infty}}.
      \end{equation}
Here $Q_{\#S(\alpha)} >0$ is independent of $N$ and $g$, but may depend on $\#S(\alpha)$, where $Q_{\#S(\alpha)} \leq Q_a$ for a $Q_a>0$ whenever $\#S(\alpha) \leq a$. The operators $\mathcal{Q}^\alpha_N$ are thus required to be non-expansive and provide approximations in $L^\infty$-norm converging at optimal rate up to some maximum order.

\begin{example}
The operators $\mathcal{Q}^\alpha_N$ can be chosen as piecewise constant interpolation on a uniform partition into $N_\beta$ subintervals in the coordinate $\beta$, in which case \eqref{eq:approxQ} holds for $s \in (0,1]$; or piecewise linear interpolation with $s \in (0, 2]$.
\end{example}

In general, $\mathcal{Q}^\alpha_N g$ is of the form
\[
    \mathcal{Q}^\alpha_{N} g = \sum_{i_{1}, \ldots, i_{a}}  c^\alpha_{i_{1}, \ldots, i_{a}}(g)\, \varphi^{\beta_1}_{N_{\beta_1}, i_1} \otimes\cdots\otimes \varphi^{\beta_a}_{N_{\beta_a}, i_a} , \quad S(\alpha) = \{ \beta_1, \ldots, \beta_a\} 
\]  
with coefficients $c^\alpha_{i_{1}, \ldots, i_{a}}(g) \in \mathbb{R}$. 

For $f \in \mathcal{F}^T_{s,B}$ with $f= \mathcal{C}(\{f_\alpha\}_{\alpha\in \Ic(T)})$, for given tuples of positive integers $N_\alpha \in \Nbb^{\#S(\alpha)}$, $\alpha \in T$, we define $\tilde f_\ell$ for $\ell = 0,\ldots,\depth(T)-1$ recursively as follows:
\[
      \tilde f_0 = \mathcal{Q}^D_{N_D}  f_D,
\]
and for $\ell > 0$, with $T_\ell = \{ \alpha_1, \ldots, \alpha_{d_\ell} \}$,
\[
    \tilde f_\ell = \biggl( \bigotimes_{\alpha \in \Vc_\ell} \mathcal{Q}^{\alpha}_{N_{\alpha}}  \biggr) \bigl(  \tilde f_{\ell - 1} \circ_{\ell-1} ( f_{\alpha} )_{\alpha \in \Ic_{\ell}} \bigr).
 \]
We set $\tilde f = \tilde f_{L-1}$ with $L = {\depth(T)}$.
\begin{lemma}\label{lem:Linftyapprox}
For $f\in \Fc^T_{s,B}$,
\begin{equation}\label{eq:comperrestLinfty0}
\Vert f - \tilde f \Vert_{L^\infty} \leq   \sum_{\alpha \in \Ic(T) } Q_{\#S(\alpha)}  C\bigl(B,s,\level(\alpha)\bigr) \, \bigl( \min {N_{\alpha}} \bigr)^{-s} \,.
\end{equation}
\end{lemma}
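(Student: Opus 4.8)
The plan is to prove \eqref{eq:comperrestLinfty0} by a level-by-level telescoping of the sequence $\tilde f_0,\ldots,\tilde f_{L-1}$. For $0\le \ell\le L-1$, let $\hat f_\ell\colon\Xc\to\Rbb$ be obtained from $\tilde f_\ell$ (which depends on the variables indexed by $\Vc_{\ell+1}$) by substituting the true sub-composition $g_\beta$ for each interior index $\beta$ and keeping the leaf variables, so that $\hat f_{L-1}=\tilde f$. Since composing with maps into the relevant domains cannot increase an $L^\infty$ norm, $\|f-\hat f_0\|_{L^\infty}\le\|f_D-\mathcal{Q}^D_{N_D}f_D\|_{L^\infty}\le Q_{\#S(D)}(\min N_D)^{-s}$ by \eqref{eq:approxQ} and $\|f_D\|_{W^{s,\infty}}\le 1$, which is the term $\alpha=D$ in \eqref{eq:comperrestLinfty0} (the factor at the root being simply $\|f_D\|_{W^{s,\infty}}\le 1$). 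Writing $f-\tilde f=(f-\hat f_0)+\sum_{\ell=1}^{L-1}(\hat f_{\ell-1}-\hat f_\ell)$, it then suffices to bound $\|\hat f_{\ell-1}-\hat f_\ell\|_{L^\infty}$ by $\sum_{\alpha\in\Ic_\ell(T)}Q_{\#S(\alpha)}C(B,s,\level(\alpha))(\min N_\alpha)^{-s}$ for each $\ell\ge 1$ and sum over $\ell$.

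For the increment at level $\ell$, I would observe that $\hat f_{\ell-1}$ and $\hat f_\ell$ arise by substituting the \emph{same} true sub-compositions into $g:=\tilde f_{\ell-1}\circ_{\ell-1}(f_\beta)_{\beta\in\Ic_\ell(T)}$ and into $\bigl(\bigotimes_{\alpha\in\Vc_\ell}\mathcal{Q}^\alpha_{N_\alpha}\bigr)g$, respectively, so that $\|\hat f_{\ell-1}-\hat f_\ell\|_{L^\infty}\le\bigl\|\bigl(\id-\bigotimes_{\alpha\in\Vc_\ell}\mathcal{Q}^\alpha_{N_\alpha}\bigr)g\bigr\|_{L^\infty}$. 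Expanding $\id-\bigotimes_\alpha\mathcal{Q}^\alpha$ in the usual way as a sum, over the interior nodes $\alpha\in\Ic_\ell(T)$, of operators that apply $\mathcal{Q}^{\alpha'}$ to the blocks $S(\alpha')$ for $\alpha'$ preceding $\alpha$ in a fixed order, apply $\id-\mathcal{Q}^\alpha$ to the block $S(\alpha)$, and act as the identity on the remaining blocks (the leaf modes of $\Vc_{\ell+1}$ already lying in the corresponding $U_{\nu,N_\nu}$), and using the non-expansiveness \eqref{eq:nonexpQ} of each $\mathcal{Q}^{\alpha'}$, one obtains $\|\hat f_{\ell-1}-\hat f_\ell\|_{L^\infty}\le\sum_{\alpha\in\Ic_\ell(T)}\bigl\|(\id-\mathcal{Q}^\alpha_{N_\alpha})^{(S(\alpha))}g\bigr\|_{L^\infty}$. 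For fixed values of the variables outside the block $S(\alpha)$, the restriction of $g$ to that block equals $h_\alpha\circ f_\alpha$, where $h_\alpha$ is the partial evaluation of $\tilde f_{\ell-1}$ in the coordinate $x_\alpha$; hence by \eqref{eq:approxQ} the corresponding term is at most $Q_{\#S(\alpha)}(\min N_\alpha)^{-s}\|h_\alpha\circ f_\alpha\|_{W^{s,\infty}}$, and it remains to bound $\|h_\alpha\circ f_\alpha\|_{W^{s,\infty}}$, uniformly over the remaining variables, by $C(B,s,\level(\alpha))$.

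The key step, and the main obstacle, is precisely this bound. I would compare $h_\alpha$ with the partial evaluation $\bar h_\alpha$ of the \emph{exact} composition $\mathcal{C}_{\ell-1}(\{f_\beta\}_{\beta\in\Ic(T)})$ in the coordinate $x_\alpha$: by \Cref{lem:composition_hi}, $\bar h_\alpha$ lies in $S^{s,\infty,B}_{\level(\alpha)}$, being a composition of the ancestor maps of $\alpha$, so \Cref{lem:composition_sobolev}, applied to $\bar h_\alpha\circ f_\alpha$, controls $\|\bar h_\alpha\circ f_\alpha\|_{W^{s,\infty}}$ by $C(B,s,\level(\alpha))$, while the remaining contribution involving $h_\alpha-\bar h_\alpha$ is handled in $L^\infty$ via the non-expansiveness of $\mathcal{Q}^\alpha_{N_\alpha}$ and an inductively maintained estimate on $\|\tilde f_{\ell-1}-\mathcal{C}_{\ell-1}(\{f_\beta\})\|_{L^\infty}$. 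The delicate point is that, since $\tilde f_{\ell-1}$ belongs to a finite-dimensional tensor-product space, its partial evaluations $h_\alpha$ carry no Sobolev bound on their own, and one must carefully exploit that they stay $L^\infty$-close to smooth true compositions so that the re-tensorization errors introduced at successive levels do not accumulate with growing constants. Summing the per-node bounds $Q_{\#S(\alpha)}C(B,s,\level(\alpha))(\min N_\alpha)^{-s}$ over $\alpha\in\Ic_\ell(T)$ and over $0\le\ell\le L-1$ then gives \eqref{eq:comperrestLinfty0}.
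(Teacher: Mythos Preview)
Your telescoping via the pulled-back functions $\hat f_\ell$ leads you to bound $\|(\id-\mathcal{Q}_\ell)g\|_{L^\infty}$ with $g=\tilde f_{\ell-1}\circ_{\ell-1}(f_\alpha)_{\alpha\in\Ic_\ell}$, and hence to apply \eqref{eq:approxQ} to partial evaluations $h_\alpha\circ f_\alpha$ where $h_\alpha$ is a slice of the \emph{approximation} $\tilde f_{\ell-1}$. This is a genuine obstacle: $\tilde f_{\ell-1}$ lies in a tensor-product span of the basis functions $\varphi^\beta_{N_\beta,i}$, and these need not carry any $W^{s,\infty}$ control (for piecewise constants they are not even Lipschitz). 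Your proposed fix --- split $h_\alpha=\bar h_\alpha+(h_\alpha-\bar h_\alpha)$ and handle the second part in $L^\infty$ --- does not close: one obtains
\[
\|\hat f_{\ell-1}-\hat f_\ell\|_{L^\infty}\le \sum_{\alpha\in\Ic_\ell}Q_{\#S(\alpha)}C(B,s,\ell)(\min N_\alpha)^{-s}\;+\;2\,\|f_{\ell-1}-\tilde f_{\ell-1}\|_{L^\infty},
\]
and after summing over $\ell$ and inserting the recursion for $\|f_{\ell-1}-\tilde f_{\ell-1}\|_{L^\infty}$, each level-$j$ contribution is multiplied by a factor of order $L-j$. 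So you prove a bound with an unwanted extra factor of order $L$, not \eqref{eq:comperrestLinfty0}.

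The paper sidesteps this entirely by a simpler recursion that never needs smoothness of $\tilde f_{\ell-1}$. One works with the exact partial compositions $f_\ell=\mathcal{C}_\ell(\{f_\alpha\})$ on $\Vc_{\ell+1}$ and writes
\[
f_\ell-\tilde f_\ell=(\id-\mathcal{Q}_\ell)f_\ell\;+\;\mathcal{Q}_\ell\bigl[(f_{\ell-1}-\tilde f_{\ell-1})\circ_{\ell-1}(f_\alpha)_{\alpha\in\Ic_\ell}\bigr].
\]
Here \eqref{eq:approxQ} is applied to the \emph{exact} $f_\ell$, whose partial evaluations in the $S(\alpha)$-block are $H\circ f_\alpha$ with $H\in S^{s,\infty,B}_{\level(\alpha)}$ by \Cref{lem:composition_hi}, so \Cref{lem:composition_sobolev} yields the factor $C(B,s,\level(\alpha))$ directly. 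The previous error $f_{\ell-1}-\tilde f_{\ell-1}$ passes through the non-expansive $\mathcal{Q}_\ell$ and through composition with the $f_\alpha$ (both $L^\infty$-contractive), giving the clean recursion $\|f_\ell-\tilde f_\ell\|\le\|(\id-\mathcal{Q}_\ell)f_\ell\|+\|f_{\ell-1}-\tilde f_{\ell-1}\|$ with no compounding. Reorganizing your argument in this way --- telescoping on $\Vc_{\ell+1}$ rather than after pulling back to $\Xc$ --- removes the gap.
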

\begin{proof}
Let $f_\ell = \mathcal{C}_\ell(( f_\alpha )_{\alpha \in \Ic_\ell(T)})$, that is, $f_\ell$ are the compositions of the functions $f_\alpha$ up to level $\ell$ without approximations, so that $f = f_{L-1}$. We set 
\[
  \mathcal{Q}_{\ell} = \bigotimes_{\alpha \in \Ic_\ell} \mathcal{Q}^{\alpha}_{N_\alpha}\, \otimes\, \bigotimes_{\alpha \in \Vc_\ell \setminus \Ic_\ell} \id_\alpha
  \]
and note that $\| \mathcal{Q}_{\ell}\|  \leq 1$ by \eqref{eq:nonexpQ} and that by the triangle inequality, for any $h$,
\[
 \|  h - \mathcal{Q}_{\ell} h  \|_\infty \leq  \sum_{\alpha \in \Ic_\ell}  \|   h - ( \mathcal{Q}^{\alpha}_{N_\alpha} \otimes \id_{\alpha^c} ) h  \|_\infty \,.
\]
Since $f = f_{L-2} \circ_{L-2}  (f_{\alpha} )_{\alpha \in \Ic_{L-1}(T)}$, combining the above and \eqref{eq:approxQ} with Lemma \ref{lem:composition_sobolev} we obtain
  \[  
  \begin{aligned}
   \| f - \tilde f\|_{L^\infty}  
     &\leq \| f - \mathcal{Q}_{L-1} f \|_{L^\infty}   \\
     &\qquad + \|  \mathcal{Q}_{L-1} ( f_{L-2} \circ_{L-2}  (f_{\alpha} )_{\alpha \in \Ic_{L-1}(T)} ) - \mathcal{Q}_{L-1} ( \tilde f_{L-2} \circ_{L-2}  (f_{\alpha} )_{\alpha \in \Ic_{L-1}(T)} ) \|_{L^\infty} \\
      & \leq
        \sum_{\alpha \in \Ic_{L-1}}  \| (I -  ( \mathcal{Q}^{\alpha}_{N_\alpha} \otimes \id_{\alpha^c} )) f \|_{L^\infty} 
           +   \| \mathcal{Q}_{L-1}\|   \| f_{L-2} - \tilde f_{L-2} \|_{L^\infty}  \\
           & \leq  \sum_{\alpha \in \Ic_{L-1}} C(B,s,{\level(\alpha)})   Q_{\#S(\alpha)} {\bigl( \min N_{\alpha} \bigr)}^{-s}  + \| f_{L-2} - \tilde f_{L-2} \|_{L^\infty} .
   \end{aligned}
  \]
  Applying the same argument to $\ell < L-1$ starting with $ f_{L-2} - \tilde f_{L-2}$, we recursively obtain
  \[
       \| f - \tilde f\|_{L^\infty}  \leq   {\sum_{\ell = 0}^{L-1}   \sum_{\alpha \in \Ic_\ell} Q_{\#S(\alpha)} C(B,s,\level(\alpha))  {\bigl( \min N_{\alpha} \bigr)}^{-s} \,, }
  \]
  which completes the proof.
\end{proof}
From the above, we deduce a result on the approximation with tree tensor networks in $L^\infty$-norm.
\begin{proposition}\label{prop:Linftyapprox}
Let $f\in \Fc^T_{s,B}$ {with $s>0$, and let \eqref{eq:nonexpQ} and \eqref{eq:approxQ} hold for this $s$}. For an admissible rank $r\in \Nbb^{\#T}$ and $U_r = U_{1,r_1} \otimes \hdots \otimes U_{d,r_d}$, we have
\begin{equation}\label{eq:comperrestLinfty}
e_{r,U_r}^T(f)_{L^\infty} \leq   \sum_{\alpha \in T\setminus\{D\} } Q_a C (B,s,\level(\alpha) ) \, r_{\alpha}^{-s} ,
\end{equation}
with $a$ the arity of $T$.
\end{proposition}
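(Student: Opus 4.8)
The plan is to deduce \Cref{prop:Linftyapprox} from the explicit construction underlying \Cref{lem:Linftyapprox}, by choosing the discretization parameters $N_\alpha$ so that the constructed approximant $\tilde f$ lands in $\Tc_r^T(U_r)$, and then post-processing the error bound of that lemma. Concretely, given the admissible rank $r=(r_\alpha)_{\alpha\in T}$, I would set $(N_\alpha)_\beta:=r_\beta$ for every $\alpha\in\Ic(T)$ and every son $\beta\in S(\alpha)$, so that each $\mathcal{Q}^\alpha_{N_\alpha}$ maps into $\bigotimes_{\beta\in S(\alpha)}U_{\beta,r_\beta}$, and then form $\tilde f=\tilde f_{\depth(T)-1}$ from these operators as in the scheme preceding \Cref{lem:Linftyapprox}.

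The first substantial step is to identify the tree tensor network structure of $\tilde f$ and read off its ranks and leaf dimensions. I would do this by induction over the levels of $T$: inserting at each node the product representation of $\mathcal{Q}^\alpha_{N_\alpha}g$ in terms of the basis functions $\varphi^\beta_{\cdot,i}$ and carrying out the substitutions $\circ_\ell$, one checks that $\tilde f_\ell$ is at each stage a tree tensor network, and that the final $\tilde f$ has exactly the parametrized form of \Cref{sec:parametrization-tree-networks}: the feature at a leaf $\nu$ is $\varphi^\nu(x_\nu)\in\Rbb^{r_\nu}$; the coefficient tensor at the root has entries $c^D_{(j_\beta)_\beta}(f_D)$; and the coefficient tensor at an interior node $\alpha\neq D$ has entries $c^\alpha_{(j_\beta)_\beta}(\varphi^\alpha_{\cdot,i}\circ f_\alpha)$, $i=1,\dots,r_\alpha$. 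The key point making this identification clean is that in $\tilde f$ the operators $\mathcal{Q}^\alpha_{N_\alpha}$ are applied only to (slices of) compositions built from the \emph{exact} component functions, so that the resulting coefficient tensors are fixed numbers, independent of the rest of the network. It follows that $\tilde f\in U_r$ with $\dim U_\nu=r_\nu$, and that $\rank_\alpha(\tilde f)\le r_\alpha$ for all $\alpha\in T$, i.e. $\tilde f\in\Tc_r^T(U_r)$, so that $e^T_{r,U_r}(f)_{L^\infty}\le\|f-\tilde f\|_{L^\infty}$.

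It remains to estimate $\|f-\tilde f\|_{L^\infty}$ via \Cref{lem:Linftyapprox}. With the above choice, $\min N_\alpha=\min_{\beta\in S(\alpha)}r_\beta$, hence $(\min N_\alpha)^{-s}=\max_{\beta\in S(\alpha)}r_\beta^{-s}\le\sum_{\beta\in S(\alpha)}r_\beta^{-s}$, and \Cref{lem:Linftyapprox} gives
\[
 \|f-\tilde f\|_{L^\infty}\le\sum_{\alpha\in\Ic(T)}Q_{\#S(\alpha)}\,C\bigl(B,s,\level(\alpha)\bigr)\sum_{\beta\in S(\alpha)}r_\beta^{-s}.
\]
Reindexing the double sum over children — every $\beta\in T\setminus\{D\}$ is the son of a unique $\alpha=P(\beta)\in\Ic(T)$, with $\#S(P(\beta))\le a$ and $\level(P(\beta))=\level(\beta)-1\le\level(\beta)$ — and using $Q_{\#S(P(\beta))}\le Q_a$ together with the monotonicity of $\ell\mapsto C(B,s,\ell)$ (evident from the formulas in \Cref{lem:composition_sobolev}), the right-hand side is at most $\sum_{\alpha\in T\setminus\{D\}}Q_a\,C(B,s,\level(\alpha))\,r_\alpha^{-s}$, which is \eqref{eq:comperrestLinfty}.

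I expect the genuinely delicate part to be the structural identification of the second step — verifying that the recursively defined $\tilde f$ coincides with a tree tensor network of precisely the claimed ranks $r_\alpha$ and leaf dimensions $r_\nu$, and matching it against the parametrization of \Cref{sec:parametrization-tree-networks}. This is essentially bookkeeping — tracking which basis functions carry which coordinates through the nested $\mathcal{Q}^\alpha_{N_\alpha}$ and $\circ_\ell$ operations — but must be done with care; the level-wise induction makes it manageable. Once it is in place, the error estimate is an immediate consequence of \Cref{lem:Linftyapprox} and the elementary rearrangement above.
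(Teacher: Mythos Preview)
Your proposal is correct and follows essentially the same route as the paper's proof: the paper also sets $N_\alpha=(r_\beta)_{\beta\in S(\alpha)}$, asserts $\tilde f\in\Tc_r^T(U_r)$, and then derives \eqref{eq:comperrestLinfty} from \Cref{lem:Linftyapprox} via the same chain $(\min N_\alpha)^{-s}\le\sum_{\beta\in S(\alpha)}r_\beta^{-s}$, $Q_{\#S(\alpha)}\le Q_a$, and $C(B,s,\level(\alpha))\le C(B,s,\level(\beta))$. The only difference is that you spell out the level-wise verification of the tensor network structure of $\tilde f$ in more detail than the paper, which simply states the rank bounds.
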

\begin{proof}
We let $N_\alpha = (r_\beta)_{\beta \in S(\alpha)}$ for all $\alpha\in \Ic(T)$ and $\tilde f$ be the corresponding approximation defined above, which is such that $\tilde f \in U_r$ and $\rank_\alpha(\tilde f) \le r_\alpha$ for each 
$\alpha \in T \setminus \{D\}$. Therefore, $e_{r,U_r}^T(f)_{L^\infty} \leq \Vert f - \tilde f \Vert_{L^\infty}$ and the result follows from \Cref{lem:Linftyapprox} and the fact that for each $\alpha \in \Ic(T)$, $Q_{\#S(\alpha)} \le Q_a$, $\bigl( \min N_{\alpha}\bigr)^{-s} = \max_{\beta \in S(\alpha)} r_\beta^{-s} \le \sum_{\beta \in S(\alpha)} r_{\beta}^{-s}$ and for each $\beta \in S(\alpha)$, $C(B,s,\level(\alpha)) \le C(B,s,\level(\beta))$. 
\end{proof}

\begin{remark}
Similar results can still be obtained when the assumptions \eqref{eq:nonexpQ} and \eqref{eq:approxQ} are relaxed. One example is for each $\alpha \in T$ to choose $\mathcal{Q}^\alpha_N$ as the Lagrangian interpolation operator on $\bigtimes_{\beta \in S(\alpha)} I^\beta$ corresponding to interpolation in Chebyshev points $\{ x^\beta_1,\ldots, x^\beta_{ {N_\beta} } \}$ on each $I^\beta$; that is, if $S(\alpha)$ contains only interior nodes in the tree, $\mathcal{Q}^\alpha_{N}$ acts on $g \in C(\bigtimes_{\beta \in S(\alpha)} I^\beta)$ as
\[
    \mathcal{Q}^\alpha_{N} g = \sum_{i_{1}, \ldots, i_{a}} g( x^{\beta_1}_{i_{1}}, \ldots, x^{\beta_a}_{i_a} ) \, {\varphi^{\beta_1}_{N_{\beta_1}, i_1} \otimes\cdots\otimes \varphi^{\beta_a}_{N_{\beta_a}, i_a} }, \quad S(\alpha) = \{ \beta_1, \ldots, \beta_a\}
\]  
where $\varphi^\beta_{N, i}$,  $i=1,\ldots,N$ are the Lagrange basis polynomials for the Chebyshev points on $I^\beta$. 
For the Lebesgue constant $\Lambda_N$, we have $\Lambda_N \leq {\prod_{\beta \in S(\alpha)} (\frac{2}{\pi} \log ( N_\beta + 1) + 1)}$.
Recall that $\| \mathcal{Q}^\alpha_N g \|_{L^\infty} \leq \Lambda_N \| g \|_{L^\infty}$ and by Lebesgue's lemma,
\[
  \|  g - \mathcal{Q}^\alpha_N g \|_{L^\infty} \leq ( 1 + \Lambda_N ) \min_{p \in \Pi_N} \| g - p \|_{L^\infty} 
    \lesssim \bigl( \min {N} \bigr)^{-s} \big(\prod_{\beta\in S(\alpha)}( 1 + \log N_\beta) \big) \| g \|_{W^{s,\infty}},
\]
with $\Pi_N = \bigotimes_{\beta\in S(\alpha)} U_{\beta,N_\beta}$.
Thus \eqref{eq:nonexpQ} and \eqref{eq:approxQ} both hold only up to an additional logarithmic factor. This leads to additional factors in $\log(r_\alpha)^a$ on the right in \eqref{eq:comperrestLinfty}. 
\end{remark}

For  a given $r = (r_\alpha)_{\alpha \in T}$, we let $N_\alpha = (r_\beta)_{\beta \in S(\alpha)}$ for each $\alpha \in \Ic(T)$ and $\tilde f$  the corresponding approximation. For $\alpha \in \Ic(T) $, the component tensor of the tree network representation of $\tilde f$ at node {$\alpha$} is explicitly given for 
$\alpha \neq D$ by
\[
  A^{{\alpha}}_{j, i_{1}, \ldots, i_{{\#S(\alpha)}}}   :=  c^{{\alpha}}_{i_{1}, \ldots, i_{\#S(\alpha)}} \bigl(  { \varphi^{\alpha}_{{r_\alpha}, j}   \circ f_\alpha} \bigr), \quad j \in\{ 1,\ldots, {r_\alpha}\},\;  {i_\beta \in  \{1,\ldots, {r_\beta}\}},  {\beta \in S(\alpha),} 
\]
or for $\alpha=D$ by
\[
  A^{D}_{ i_{1}, \ldots, i_{a}}   :=  c^{D}_{i_{1}, \ldots, i_{\#S(D)}} \bigl(  f_D \bigr), \quad  i_\beta \in  \{1,\ldots, r_\beta\},  \beta \in S(D).
\]
\begin{example}
  Let $D = \{1, 2,3, 4\}$ and let $T$ be the corresponding balanced tree with arity $a = 2$. Then we have the explicit tensor representation
  \[
  \begin{aligned}
    \tilde f &= {\sum_{i_{12}=1}^{r_{12}} \sum_{i_{34} = 1}^{r_{34}} } {\sum_{i_1=1}^{r_1} \sum_{ i_2 = 1}^{r_2}}   {\sum_{i_3=1}^{r_3} \sum_{ i_4 = 1}^{r_4}}    c^D_{i_{12}, i_{34}} (f_D) \, c^{\{1,2\}} _{{ i_1, i_2}} ( \varphi^{\{1,2\}}_{{r_{12}} , i_{12}} \circ f_{\{1,2\}} ) \,  c^{\{3,4\}} _{{ i_3, i_4}} ( \varphi^{\{3,4\}}_{{r_{34}} , i_{34}} \circ f_{\{3,4\}} )   \\
   & \qquad\qquad \qquad \qquad\qquad \qquad  \times    \varphi^{\{1\}}_{{r_1} , i_1} \otimes \varphi^{\{2\}}_{{r_2} , i_2} \otimes \varphi^{\{3\}}_{{r_3} i_3} \otimes \varphi^{\{4\}}_{{r_4}, i_4}   \\
     & =  \sum_{\substack{ i_{12}, i_{34} \\  i_1, i_2 ,  i_3, i_4}  }  A^D_{i_{12}, i_{34}}   A^{\{1,2\}} _{i_{12}, i_1, i_2}  A^{\{3,4\}} _{i_{34}, i_3, i_4} \, 
     {\varphi^{\{1\}}_{r_1, i_1} \otimes \varphi^{\{2\}}_{r_2, , i_2} \otimes \varphi^{\{3\}}_{r_3 , i_3} \otimes \varphi^{\{4\}}_{r_{4} , i_4}  }.
  \end{aligned}
  \]
  With the particular choice of $\mathcal{Q}^\alpha_N$ as piecewise constant approximation, with each $\varphi^{\beta}_{{r_\beta}, j} $ the characteristic function of a subinterval of $I^\beta$, the entries of the tensors $A^\alpha$ of order three have a simple interpretation: their nonzero entries correspond exactly to parallelepipeds in the chosen three-dimensional product grid that intersect the graph of the bivariate function $f_\alpha$; in other words, these entries mark a ``voxel approximation'' of the graph of $f_\alpha$.
\end{example}

\subsubsection{Approximation complexity estimates}

We are now ready to state the main result on the approximation of compositional functions from $\Fc^T_{s,B}$ by tree tensor networks. 
 \begin{theorem}\label{th:N-compositional}
 Let $f \in \Fc^T_{s,B}$ with $s \in \Nbb$. For $\epsilon>0$, we denote by $N(f,\varepsilon,d)$ the complexity $N(T,r,U_n)$ sufficient to achieve an error $\varepsilon$ for the approximation of $f$ in the format $\Tc^T_r(U_n),$ with error measured in $L^2$ for arbitrary $s$ or in $L^\infty$ for {$s \leq 2$}. 
Let $a =  \max_{\alpha \in \Ic(T)} \#S(\alpha)$ be the arity of the tree and $L=\depth(T)$. 
Then we have the following estimates:
\begin{enumerate}[{\rm(i)}]
\item For a trivial tree $T$ with arity $a=d$ and depth $L=1$, 
$$
N(f,\varepsilon,d)\le {C_d}\,  \varepsilon^{-d/s}.
$$
with a constant $C_d$ depending super-exponentially on $d$ but not depending on $\epsilon$. 
\item For a tree with arity $a$ independent of $d$,  
\begin{align}
N(f,\varepsilon,d) \le {C_d}\, {L^{a+1}}  \varepsilon^{-(a+1)/s} B_1^{(a+1)L} B_\star^{a+1}\label{N-compo-arity-a-bounded}
\end{align}
with a constant $C_d$ depending polynomially on $d$ but not depending on $\varepsilon$. 
\end{enumerate}
\end{theorem}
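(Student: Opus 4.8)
The plan is to choose the $T$-ranks $r=(r_\alpha)_{\alpha\in T}$ and the leaf discretization dimensions $(n_\nu)$ as functions of $\varepsilon$ so that the error estimate of \Cref{prop:L2approx} (for the $L^2$-error and any $s\in\Nbb$) or of \Cref{prop:Linftyapprox} (for the $L^\infty$-error; here we need $s\in\{1,2\}$ so that \eqref{eq:nonexpQ}--\eqref{eq:approxQ} hold, e.g.\ with $\mathcal Q^\alpha_N$ piecewise constant or piecewise linear interpolation) drops below $\varepsilon$, and then to insert these into the complexity count $N(T,r,U_n)$.

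First I would work with a single common value: set $r_\alpha=R$ for all $\alpha\in T\setminus\{D\}$, $r_D=1$, and $n_\nu=R$. This tuple is admissible, and raising ranks only decreases the error, so nothing is lost. Since $C(B,s,\ell)$ is nondecreasing in $\ell$ (as $B_1,B_\star,C\ge1$) and $\level(\alpha)\le L$, \Cref{prop:L2approx} gives $e^T_{r,U_r}(f)_{X}\le \sqrt{\#T-1}\;M\,C(B,s,L)\,R^{-s}$, while \Cref{prop:Linftyapprox} gives $e^T_{r,U_r}(f)_{L^\infty}\le (\#T-1)\,Q_a\,C(B,s,L)\,R^{-s}$. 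In either case it suffices to take $R$ the least integer with $R\ge\bigl(c_d\,C(B,s,L)\bigr)^{1/s}\varepsilon^{-1/s}$, where $c_d$ is $\sqrt{\#T-1}\,M$ or $(\#T-1)\,Q_a$ respectively. Using the explicit formula for $C(B,s,L)$ together with $(CL)^{(s-1)/s}\le CL$ (since $CL\ge1$ and $0\le(s-1)/s<1$), one gets $C(B,s,L)^{1/s}\le CL\,B_1^{L-1}B_\star\le CL\,B_1^{L}B_\star$, hence, for $0<\varepsilon<1$,
\[
 R\;\le\;\gamma_d\,L\,B_1^{L}\,B_\star\,\varepsilon^{-1/s},
\]
where $\gamma_d$ depends polynomially on $d$ (because $\#T\le 2d-1$, and for bounded arity $Q_a$ does not depend on $d$).

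It then remains to evaluate $N(T,r,U_r)=\sum_{\alpha\in\Ic(T)}r_\alpha\prod_{\beta\in S(\alpha)}r_\beta+\sum_{\nu=1}^d r_\nu n_\nu$. Since $r_D=1$, this is bounded by $R^{a}+(\#\Ic(T)-1)\,R^{a+1}+d\,R^{2}$ with $a$ the arity of $T$. For a tree of arity $a$ independent of $d$ we have $\#\Ic(T)-1\le d-2$, and since $a+1\ge 3\ge 2$ and $L,B_1,B_\star\ge1$, for $0<\varepsilon<1$ the middle term dominates: all three are bounded by $C_d\,L^{a+1}B_1^{(a+1)L}B_\star^{a+1}\varepsilon^{-(a+1)/s}$ with $C_d\lesssim d\,\gamma_d^{\,a+1}$ polynomial in $d$, which is \eqref{N-compo-arity-a-bounded}. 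For the trivial tree one has $a=d$, $L=1$, and crucially $\#\Ic(T)=1$, so the $R^{a+1}$ term is absent: $N(T,r,U_r)\le R^{d}+d\,R^{2}\le(\gamma_d^{\,d}+d\,\gamma_d^{\,2})\,\varepsilon^{-d/s}$ for $d\ge2$, with $\gamma_d^{\,d}$ dominant and super-exponential in $d$; this is (i).

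The computation itself is elementary; the points requiring care are that (a) for the trivial tree the $R^{a+1}$ contribution vanishes, producing the rate $\varepsilon^{-d/s}$ rather than $\varepsilon^{-(d+1)/s}$; (b) the exponents of $L$, $B_1$, $B_\star$ must be kept aligned with \eqref{N-compo-arity-a-bounded}, which is why one uses the coarse uniform bound $C(B,s,\ell)^{1/s}\le CL\,B_1^{L}B_\star$ valid for every $\ell\le L$ rather than a finer level-dependent estimate; and (c) the $d$-dependence of the prefactor is polynomial in the bounded-arity case (coming from $\gamma_d^{\,a+1}$) but super-exponential for the trivial tree (coming from $\gamma_d^{\,d}$).
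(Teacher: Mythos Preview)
Your proof is correct and follows essentially the same approach as the paper: both invoke \Cref{prop:L2approx} and \Cref{prop:Linftyapprox}, use the bound $C(B,s,\ell)\le (C\ell)^{s-1}B_1^{s(\ell-1)}B_\star^s$ from \Cref{lem:composition_sobolev}, solve for the ranks needed to make the error at most $\varepsilon$, and insert these into the complexity formula, with the key observation that the $R^{a+1}$ contribution is absent for the trivial tree. The only difference is that the paper keeps level-dependent ranks $r_\alpha\sim \varepsilon^{-1/s}(C\ell_\alpha)^{1-1/s}B_1^{\ell_\alpha-1}B_\star(\#T-1)^{1/s}$ throughout the computation, whereas you take the coarser uniform choice $r_\alpha=R$ obtained by bounding $C(B,s,\ell_\alpha)\le C(B,s,L)$; since the final estimate is governed by the worst level $\ell=L$ anyway, your simplification leads to the same stated bounds with a cleaner derivation.
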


\begin{proof}
{From \Cref{prop:L2approx} and \Cref{prop:Linftyapprox}, we have that 
$$e_{r,U_n}^T(f)_{L^p} \leq   \sum_{\alpha \in T\setminus\{D\} } M C(B,s,\level(\alpha)) \, r_{\alpha}^{-s} ,$$
for $p=2$ and arbitrary $s$ with a constant $M$ independent of $d$, and for $p=\infty$ and $s=1$ or $2$ and a constant $M$ depending  
 on the arity $a$.
}
If the ranks $r_\alpha$ are such that 
  \begin{align}
  r_\alpha \ge \varepsilon^{-1/s}(\#T-1)^{1/{s}} M^{1/s}  C(B,s,\ell_\alpha)^{1/s}  ,\label{cond-r}\end{align}
 then $
e^T_{r,{U_r}}(f)_{{L^p}}  \le \varepsilon
$.
From  \Cref{lem:composition_sobolev}, we know that $C(B,s,\ell) \le (C \ell)^{s-1} B_1^{s(\ell-1)} B_\star^s .$ 
Therefore, from condition \eqref{cond-r}, we deduce the sufficient condition on $r_\alpha$ to achieve an error $\varepsilon$ is 
$$
r_\alpha \ge  M^{1/s}\varepsilon^{-1/s} (C\ell_\alpha)^{1-1/s} B_1^{\ell_\alpha-1}B_\star (\#T-1)^{1/{s}}.
$$
Letting $r_\alpha := r_\alpha(\varepsilon) \sim M^{1/s}\varepsilon^{-1/s} (C\ell_\alpha)^{1-1/s} B_1^{\ell_\alpha-1}B_\star (\#T-1)^{1/(2s)}$ be the minimal ranks satisfying the above condition, we have $N(f,\varepsilon,d) \le N(T,r,U_r)$, which yields 
\begin{multline*}
N(f,\varepsilon,d)  \lesssim M^{a/s}\varepsilon^{-a/s} (C)^{a-a/s} B_\star^a (\#T-1)^{a/{s}}   \\
 \qquad + \sum_{\alpha \in \Ic(T) \setminus \{D\}}  M^{(a+1)/s}\varepsilon^{-(a+1)/s} (C\ell_\alpha)^{1-1/s}(C(\ell_\alpha+1))^{a-a/s} \\
 \qquad\qquad
 \times B_1^{(a+1)\ell_\alpha-1} B_\star^{a+1} (\#T-1)^{(a+1)/{s}} 
\\ + \sum_{\nu=1}^d M^{2/s}\varepsilon^{-2/s} (C\ell_\nu)^{2-2/s} B_1^{2\ell_\nu-2}B_\star^2 (\#T-1)^{{2}/s}.
\end{multline*}
 Noting that $\#T\le 2d-1$, $\ell_\alpha \le L-1$ for $\alpha \in \Ic(T)$ and $\ell_\alpha \le L$ for $\alpha\in \Lc(T)$, and letting $a =  \max_{\alpha \in \Ic(T)} \#S(\alpha)$ be the arity of the tree and $L=\depth(T)$, we obtain 
  \begin{align*}
N(f,\varepsilon,d) \le& M^{a/s}\varepsilon^{-a/s} (C)^{a(1-1/s)} B_\star^a (\#T-1)^{a/{s}} \nonumber  \\
& 
+ (\#\Ic(T)-1)  M^{(a+1)/s}\varepsilon^{-(a+1)/s} (CL)^{(a+1)(1-1/s)} B_1^{(a+1)(L-1)-1} B_\star^{a+1} (2d)^{(a+1)/{s}} 
\nonumber\\
& +d M^{2/s}\varepsilon^{-2/s} (CL)^{2(1-1/s)} B_1^{2L-2}B_\star^2 (2d)^{{2}/s}. %
\end{align*}
In particular, for a trivial tree $T$ with arity $d$ and depth $1$, 
\[
N(f,\varepsilon,d)\le M^{d/s}\varepsilon^{-d/s} (C)^{d(1-1/s)} B_\star^d d^{d/{s}} +d M^{2/s}\varepsilon^{-2/s} (CL)^{2-2/s} B_\star^2 (2d)^{{2}/s},
\]
whereas for a tree with arity $a$ independent of $d$,  
\begin{equation*}
N(f,\varepsilon,d) \le \beta d^{1+(a+1)/{s}}  \varepsilon^{-(a+1)/s} L^{a+1} B_1^{(a+1)L} B_\star^{a+1}%
\end{equation*}
with a constant $\beta$ independent of $\varepsilon$ and $d$.
 \end{proof}

\begin{remark}
The following observations can be made:
\begin{enumerate}[{\rm(i)}]
\item 
As expected, we observe that for a trivial tree, a shallow tensor network (Tucker format) does not exploit more than the Sobolev regularity $H^{s}$ of the function and suffers from the curse of dimensionality.
\item 
In the case where the tree has arity {$a$ independent of $d$}, we observe in \eqref{N-compo-arity-a-bounded}
that the complexity is exponential in the depth $L$ through the term 
$B_1^{(a+1)L} $, which depends on the bound $B_1$ on the first derivatives of functions $f_\alpha$. 
\item 
An important observation is that if {$a$ is independent of $d$} and $B_1 = 1$ (i.e. functions $f_\alpha$ are $1$-Lipschitz), then there is no more an exponential dependence on $L$ and the complexity depends polynomially on $d$ and $\varepsilon^{-1}$. That means that tree tensor networks do not present the curse of dimensionality for functions in $ \Fc^T_{s,B}$. 
 \item 
 When $B_1>1$ and $a$ independent of $d$, tree tensor networks may or may not suffer from the curse of dimensionality for functions in $ \Fc^T_{s,B}$, depending on the dependence of $L$ in $d$.
 \item For binary trees with $a=2$, 
 \begin{align*}
 N(f,\varepsilon,d)  \le  {C_d L^3} \varepsilon^{-3/s} B_1^{3L} B_\star^{3}  ,
\end{align*}
where $L\le \lceil \log_2(d) \rceil$ for a balanced binary  tree, and $L = d-1$ for a linear binary tree.
For a linear tree, we observe a complexity exponential in $d$ of the form $B_1^{3d}$. However, for a balanced tree, the dependence in $d$ is only polynomial on $d$ of the form $B_1^{3\log_2(d)}$. This means that the approximation complexity may depend exponentially on $d$ for any tree with depth depending polynomially on $d$, in particular for linear trees, but remains polynomial in $d$ for balanced trees, or more generally for any tree with a depth $L$ depending logarithmically on $d$.
\end{enumerate}
\end{remark}

\bibliographystyle{plain}

\begin{thebibliography}{99}

\bibitem{Ali2020partI}
M.~Ali and A.~Nouy.
\newblock Approximation with tensor networks. part {I}: Approximation spaces.
\newblock {\em arXiv e-prints, arxiv:2007.00118}, 2020.

\bibitem{Ali2020partII}
M.~Ali and A.~Nouy.
\newblock Approximation with tensor networks. part {II}: Approximation rates
  for smoothness classes.
\newblock {\em arXiv e-prints, arxiv:2007.00128}, 2020.

\bibitem{ali2021approximation}
M.~Ali and A.~Nouy.
\newblock Approximation with tensor networks. part III: Multivariate
  approximation.
\newblock {\em arXiv preprint arXiv:2101.11932}, 2021.


\bibitem{amaral2014decomposition}
S.~Amaral, D.~Allaire, and K.~Willcox.
\newblock A decomposition-based approach to uncertainty analysis of
  feed-forward multicomponent systems.
\newblock {\em International Journal for Numerical Methods in Engineering},
  100(13):982--1005, 2014.

\bibitem{BSU}
M.~Bachmayr, R.~Schneider, and A.~Uschmajew. 
\newblock Tensor networks and hierarchical tensors for the solution of high-dimensional partial differential equations.
\newblock {\em Foundations of Computational Mathematics}, 16:1423--1472, 2016.


\bibitem{DeVore98nonlinearapproximation}
R.~A.~DeVore.
\newblock Nonlinear approximation.
\newblock {\em Acta Numerica}, 7:51--150, 1998.


\bibitem{dung2018hyperbolic}
D.~D{\~u}ng, V.~Temlyakov, and T.~Ullrich.
\newblock {\em Hyperbolic cross approximation}.
\newblock Springer, 2018.



\bibitem{grasedyck2010}
L.~Grasedyck.
\newblock Hierarchical singular value decomposition of tensors.
\newblock {\em SIAM J. Matrix Anal. Appl.}, 31:2029--2054, 2010.


\bibitem{hackbusch2019tensor}
W.~Hackbusch.
\newblock {\em Tensor Spaces and Numerical Tensor Calculus}, volume~56.
\newblock Springer Nature, 2019.

\bibitem{hackbusch2009newscheme}
W.~Hackbusch and S.~K\"uhn.
\newblock {A New Scheme for the Tensor Representation}.
\newblock {\em Journal of Fourier Analysis and Applications},
  {15}({5}):{706--722}, {2009}.

\bibitem{2019arXiv190304234G}
M.~{Griebel} and H.~{Harbrecht}.
\newblock {Analysis of tensor approximation schemes for continuous functions}.
\newblock {\em Foundations of Computational Mathematics}, DOI: 10.1007/s10208-021-09544-6, 2021.


\bibitem{hansen2012best}
M.~Hansen and W.~Sickel.
\newblock Best $m$-term approximation and Sobolev--Besov spaces of dominating
  mixed smoothness---the case of compact embeddings.
\newblock {\em Constructive Approximation}, 36(1):1--51, 2012.



\bibitem{Hoorfar2008}
A.~Hoorfar and M.~Hassani.
\newblock Inequalities on the Lambert $w$ function and hyperpower function.
\newblock {\em J. Inequalities in Pure and Applied Math.}, 9(2), 2008.



\bibitem{marque2019efficient}
S.~Marque-Pucheu, G.~Perrin, and J.~Garnier.
\newblock Efficient sequential experimental design for surrogate modeling of
  nested codes.
\newblock {\em ESAIM: Probability and Statistics}, 23:245--270, 2019.



\bibitem{mhaskar2016deep}
H.~N.~Mhaskar and T.~Poggio.
\newblock Deep vs.\ shallow networks: An approximation theory perspective.
\newblock {\em Analysis and Applications}, 14(06):829--848, 2016.


\bibitem{Nouy2019}
A.~Nouy.
\newblock Higher-order principal component analysis for the approximation of
  tensors in tree-based low-rank formats.
\newblock {\em Numerische Mathematik}, 141(3):743--789, 2019.

\bibitem{pinkus2012n}
A.~Pinkus.
\newblock {\em N-widths in Approximation Theory}, volume~7.
\newblock Springer Science \& Business Media, 2012.


\bibitem{poggio2017why}
 T.~Poggio, H.~Mhaskar, L.~Rosasco, B.~Miranda, and Q.~Liao.
 \newblock Why and when can deep-but not shallow-networks avoid the curse of dimensionality: a review.
  \newblock {\em International Journal of Automation and Computing}, 14(5):503--519, 2017.


\bibitem{sanson2019systems}
F.~Sanson, O.~Le~Maitre, and P.~M. Congedo.
\newblock Systems of Gaussian process models for directed chains of solvers.
\newblock {\em Computer Methods in Applied Mechanics and Engineering},
  352:32--55, 2019.


\bibitem{Schneider201456}
R.~Schneider and A.~Uschmajew.
\newblock Approximation rates for the hierarchical tensor format in periodic
  Sobolev spaces.
\newblock {\em Journal of Complexity}, 30(2):56 -- 71, 2014.
\newblock Dagstuhl 2012.



\bibitem{szalay2015tensor}
S.~Szalay, M.~Pfeffer, V.~Murg, G.~Barcza, F.~Verstraete, R.~Schneider, and {\"O}.~Legeza.
\newblock Tensor product methods and entanglement optimization for ab initio quantum chemistry.
\newblock {\em International Journal of Quantum Chemistry}, 115(19):1342--1391,
  2015.



\bibitem{Temlyakov:1989}
V.~Temlyakov.
\newblock Estimates of best bilinear approximations of periodic functions.
\newblock {\em Proc. Steklov Inst. Math.}, pages 275--293, 1989.

\bibitem{temlyakov1986approximations}
V.~Temlyakov.
\newblock Approximations of functions with bounded mixed derivative.
\newblock {\em Trudy Matematicheskogo Instituta imeni VA Steklova}, 178:3--113,
  1986.


\bibitem{temlyakov1989bilinear}
V.~Temlyakov.
\newblock Bilinear approximation and applications.
\newblock {\em Trudy Matematicheskogo Instituta imeni VA Steklova},
  187:191--215, 1989.



\end{thebibliography}

 \end{document}